\crefname{theorem}{Theorem}{Theorems}
\crefname{fact}{Fact}{Facts}
\crefname{note}{Note}{Notes}
\crefname{lemma}{Lemma}{Lemmas}
\crefname{alg}{Algorithm}{Algorithms}
\crefname{remark}{Remark}{Remarks}
\crefname{example}{Example}{Examples}
\crefname{prop}{Proposition}{Propositions}
\crefname{conj}{Conjecture}{Conjectures}
\crefname{cor}{Corollary}{Corollaries}
\crefname{defn}{Definition}{Definitions}
\crefname{equation}{\!\!}{\!\!} 
\tikzstyle directed=[postaction={decorate,decoration={markings, mark=at position #1 with {\arrow[scale=1]{>}}}}]
\tikzstyle rdirected=[postaction={decorate,decoration={markings, mark=at position #1 with {\arrow[scale=1]{<}}}}]
\newcommand{\clr}{rgb:black,1;blue,4;red,1}
\newcommand{\wdot}{node[circle, draw, color=\clr, fill=white, thick, inner sep=0pt, minimum width=4.5pt]{}}
\newtheorem{theorem}{Theorem}[section]
\newtheorem{lemma}[theorem]{Lemma}
\newtheorem{proposition}[theorem]{Proposition}
\newtheorem{corollary}[theorem]{Corollary} 
\newtheorem{definition}[theorem]{Definition}
\newtheorem{remark}[theorem]{Remark}
\newcommand{\End}{\operatorname{End}}
\newcommand{\Hom}{\operatorname{Hom}}
\newcommand{\merge}{\operatorname{merge}}
\renewcommand{\split}{\operatorname{split}}
\newcommand{\Z}{\mathbb{Z}}
\newcommand{\C}{\mathbb{C}}
\newcommand{\onto}{\twoheadrightarrow}
\newcommand{\into}{\hookrightarrow}
\newcommand{\up}{\uparrow}
\newcommand{\q}{\mathfrak{q}}
\newcommand{\frakS}{\mathfrak{S}}
\newcommand{\Sdot}{\mathscr{S}}
\newcommand{\W}{W}
\newcommand{\beq}{\begin{equation}}
\newcommand{\eeq}{\end{equation}}
\newcommand{\bea}{\begin{eqnarray*}}
\newcommand{\eea}{\end{eqnarray*}}
\newcommand{\bi}{\begin{itemize}}
\newcommand{\ei}{\end{itemize}}
\newcommand{\be}{\begin{enumerate}}
\newcommand{\ee}{\end{enumerate}}
\newcommand{\bc}{\begin{center}}
\newcommand{\ec}{\end{center}}
\newcommand{\bg}{\begin{gathered}}
\newcommand{\eg}{\end{gathered}}
\newcommand{\bt}{\begin{tikzpicture}}
\newcommand{\et}{\end{tikzpicture}}
\newcommand{\ol}{\overline}
\newcommand{\ul}{\underline}
\newcommand{\bma}{\begin{bmatrix}}
\newcommand{\ema}{\end{bmatrix}}
\newcommand{\squig}{\rightsquigarrow}
\newcommand{\arxiv}[1]{\href{http://arxiv.org/abs/#1}{\tt arXiv:\nolinkurl{#1}}}
\renewcommand{\ss}{\scriptsize}
\renewcommand{\a}{\mathsf{a}}
\renewcommand{\b}{\mathsf{b}}
\renewcommand{\c}{\mathsf{c}}
\newcommand{\A}{\mathsf{A}}
\newcommand{\fs}{\footnotesize}
\renewcommand{\H}{\mathscr{H}}
\newcommand{\nfrac}[2]{\genfrac{}{}{0pt}{}{#1}{#2}}
\renewcommand{\i}{\text{\boldmath$i$}}
\renewcommand{\th}{\text{th}}
\newcommand{\st}{\text{st}}
\newcommand{\T}{\mathscr{T}}
\newcommand{\0}{\bar{0}}
\renewcommand{\1}{\bar{1}}
\newcommand{\s}{\mathcal{S}}
\newcommand{\inftyspiderup}{\mathfrak{q}\text{-}\mathbf{Web}_{\up}}
\begin{document}

\title{Webs for permutation supermodules of type Q}

\author{Gordon C. Brown }
\address{Fort Worth, TX}
\email{gcbrown\,@\,utexas.edu}

\date{\today}
\subjclass[2010]{05E10, 16G99.}

\begin{abstract}
We give a diagrammatic calculus for the intertwiners between permutation supermodules over the Sergeev superalgebra. We also give a diagrammatic basis for the space of intertwiners between two permutation supermodules.
\end{abstract}

\maketitle  



\section{Introduction}

This paper may be thought of as a companion to the author's recent work with Kujawa on webs of type Q \cite{BKu}. In that paper, the authors generalized the $\mathfrak{sl_n}$-webs of Kuperberg \cite{Ku} to give a combinatorial model of a category of modules over $\q_n$, the Lie superalgebra of type Q. The starting point was a pair of mutually centralizing actions (in that case between $\q_m$ and $\q_n$), which, since Cautis-Kamnitzer-Morrison's groundbreaking work in 2014 \cite{CKM}, are known to lead to diagrammatic presentations of certain module categories. See \cite{RT,QS,ST,TVW} for other recent examples of webs in representation theory.

In the present work, we apply similar techniques to the mutually centralizing actions arising in the Schur-Weyl-Sergeev duality  \cite{Se} between $\q_n$ and the Sergeev superalgebra $\H_c(r)$ (known elsewhere as the Hecke-Clifford superalgebra). This time, the module category being described consists of the permutation supermodules $M^\lambda$ of $\H_c(r)$, for $\lambda$ a composition of $r$. Notably, the combinatorial model used here is a ``submodel" of that appearing in \cite{BKu}; we will make this precise shortly.

\subsection{Webs of type Q}

Let us briefly recall some of the combinatorics developed in \cite{BKu}. The reader is referred there for further details.

 Let $\a=(\a_1,\dots,\a_m)$ and $\b=(\b_1,\dots,\b_n)$ be finite sequences of positive integers such that $|\a|:=\sum_{i=1}^m \a_i$ and $|\b|:=\sum_{i=1}^n \b_i$ are equal. A(n unoriented) \emph{web of type Q from $\a\to\b$} is a kind of decorated, trivalent graph, with lower boundary labeled by $\a$ and upper boundary labeled by $\b$. For example, the following is a web from $(4,9,6,7)\to(6,5,1,4,8,2)$:
\[
\xy
(0,0)*{
\bt[color=\clr, scale=1.2]
	\node at (2,0) {\scriptsize $4$};
	\node at (3,0) {\scriptsize $9$};
	\node at (4,0) {\scriptsize $6$};
	\node at (4.75,0) {\scriptsize $7$};
	\node at (2,2.9) {\scriptsize $6$};
	\node at (2.75,2.9) {\scriptsize $5$};
	\node at (3.25,2.9) {\scriptsize $1$};
	\node at (3.75,2.9) {\scriptsize $4$};
	\node at (4.375,2.9) {\scriptsize $8$};
	\node at (5,2.9) {\scriptsize $2$};
	\draw [thick, directed=0.65] (2,0.15) to (2,0.5);
	\draw [thick, directed=0.65] (3,0.15) to (3,0.5);
	\draw [thick, ] (4,0.15) to (4,0.5);
	\draw [thick, directed=0.75] (4.75,0.15) to (4.75,1);
	\draw [thick, directed=0.65] (2,0.5) [out=30, in=330] to (2,1.25);
	\draw [thick, directed=0.65] (2,0.5) [out=150, in=210] to (2,1.25);
	\draw [thick, directed=0.65] (2,1.25) [out=90, in=210] to (2.375,1.75);
	\draw [thick, ] (3,0.5) [out=150, in=270] to (2.75,1);
	\draw [thick, directed=0.65] (3,0.5) to (3.75,1);
	\draw [thick, ] (2.75,1) to (2.75,1.25);
	\draw [thick, directed=1] (3.5,2) [out=30, in=270] to (3.75,2.75);
	\draw [thick, directed=1] (3.5,2) [out=150, in=270] to (3.25,2.75);
	\draw [thick, directed=0.01] (2.75,1.25) [out=90, in=330] to (2.375,1.75);
	\draw [thick, directed=0.65] (2.375,1.75) to (2.375,2.25);
	\draw [thick, directed=1] (2.375,2.25) [out=30, in=270] to (2.75,2.75);
	\draw [thick, directed=1] (2.375,2.25) [out=150, in=270] to (2,2.75);
	\draw [thick, directed=0.45] (4,0.5) [out=90, in=330] to (3.75,1);
	\draw [thick, directed=0.65] (3.75,1) to (3.75,1.5);
	\draw [thick, directed=0.65] (3.75,1.5) [out=150, in=270] to (3.5,2);
	\draw [thick, ] (3.75,1.5) [out=30, in=270] to (4,2);
	\draw [thick, directed=0.01] (4,2) [out=90, in=210] to (4.375,2.5);
	\draw [thick, directed=0.65] (4.75,1) [out=30, in=330] to (4.75,1.75);
	\draw [thick, directed=0.65] (4.75,1) [out=150, in=210] to (4.75,1.75);
	\draw [thick, directed=0.65] (4.75,1.75) to (4.75,2.125);
	\draw [thick, directed=0.65] (4.75,2.125) to (4.375,2.5);
	\draw [thick, ] (4.75,2.125) [out=30, in=270] to (5,2.5);
	\draw [thick, directed=1] (5,2.5) to (5,2.75);
	\draw [thick, directed=1] (4.375,2.5) to (4.375,2.75);
	\node at (1.575,0.95) {\scriptsize $3$}; 
	\node at (2.4,0.95) {\scriptsize $1$}; 
	\node at (2.975,1.2) {\scriptsize $7$}; 
	\node at (2.65,2) {\scriptsize $11$}; 
	\node at (1.9,1.65) {\scriptsize $4$}; 
	\node at (3.5,0.575) {\scriptsize $2$}; 
	\node at (3.525,1.25) {\scriptsize $8$}; 
	\node at (3.35,1.65) {\scriptsize $5$}; 
	\node at (4.2,1.925) {\scriptsize $3$}; 
	\node at (4.95,1.925) {\scriptsize $7$}; 
	\node at (5.15,1.45) {\scriptsize $5$}; 
	\node at (4.35,1.45) {\scriptsize $2$}; 
	\node at (4.65,2.5) {\scriptsize $5$}; 
	\draw (1.85,0.65) \wdot;
	\draw (2.765,0.85) \wdot;
	\draw (2.65,1.525) \wdot;
	\draw (4.75,0.45) \wdot;
	\draw (4.95,2.3) \wdot;
	\draw (3.925,1.65) \wdot;
	\draw (3.25,2.45) \wdot;
\et
};
\endxy.
\]
In \cite{BKu}, the arrowheads were necessary to distinguish between a module and its dual. While there is no such need in this paper (i.e. all arrowheads will be upward-oriented), we leave them in place for consistency, as well as to remind the reader of our chosen convention to read webs from bottom to top. Under this convention, each vertex of the above web consists of either a \emph{merge} of two edges into one, or a \emph{split} of one edge into two. Thinking of the numerical labels as indicating the ``thickness" of an edge, one sees that merges and splits preserve thickness. Finally, on some edges lie one or more circular \emph{dots}, which leave thicknesses unchanged.\footnote{As arrowheads and thicknesses are simply labels, there is no significance in their exact locations relative to each other or to merges, splits, and dots.} 

We are justified in referring to $|\a|=|\b|$ as the \emph{total thickness} of a web from $\a\to\b$, e.g. the above web has total thickness $26.$ We put a $\Z_2$-grading on webs by declaring the parity of a web to be the number of dots modulo 2.

Let $\C$ be the field of complex numbers. The category $\inftyspiderup$ has as objects all finite sequences of positive integers. The morphism space $\Hom_{\inftyspiderup}(\a,\b)$ is the $\Z_2$-graded $\C$-vector space spanned by equivalence classes of webs of type Q from $\a\to\b$ modulo certain relations, where we declare $\Hom_{\inftyspiderup}(\a,\b)=0$ if $|\a|\neq|\b|$. Composition is given by vertical concatenation of webs, i.e. for webs $w_1,w_2$ we have
\[
w_1\circ w_2 \ := \ 
\xy
(0,0)*{
\begin{tikzpicture}[scale=.35, color=\clr]
	\draw [ thick, ] (0,-5) to (0,-4);
	\draw [ thick, ] (2,-5) to (2,-4);
	\draw [ thick,  directed=1] (0,-2) to (0,-1);
	\draw [ thick,  directed=1] (2,-2) to (2,-1);
	\draw [ thick] (-0.3,-4) rectangle (2.3,-2);
	\node at (1,-3) { $w_1$};
	\draw [ thick, ] (0,-8) to (0,-7);
	\draw [ thick, ] (2,-8) to (2,-7);
	\draw [ thick] (-0.3,-7) rectangle (2.3,-5);
	\node at (1,-1.65) { \,$\cdots$};
	\node at (1,-4.6) { \,$\cdots$};
	\node at (1,-7.6) { \,$\cdots$};
	\node at (1,-6) { $w_2$};
\end{tikzpicture}
};
\endxy \ .
\]
We declare $w_1\circ w_2=0$ if the lower boundary of $w_1$ and the upper boundary of $w_2$ are different. There is also a tensor product structure on $\inftyspiderup$, given, roughly speaking, by horizontal concatenation. 
 Ultimately, $\inftyspiderup$ forms a \emph{monoidal supercategory} in the sense of Brundan-Ellis \cite{BE}. 

\subsection{From categories to algebras}

One of the main results of \cite{BKu} (Theorem 7.1) is that a certain quotient of $\inftyspiderup$ is monoidally superequivalent to the full subcategory of $\q_n$-supermodules tensor-generated by the supersymmetric powers $\s^k(\C^{n|n})$ of the natural $\q_n$-supermodule $\C^{n|n},$ $k\geq 0$.\footnote{The categories also admit symmetric braidings, and the equivalence extends to them.} In particular, the functor maps an object $\a=(\a_1,\dots,\a_m)$ to the tensor product
\[
\s^{\,\a_1}(\C^{n|n})\otimes\cdots\otimes\s^{\,\a_m}(\C^{n|n}).
\]

In this paper, we fix $r>0$ and consider the full subcategory of $\inftyspiderup$ whose objects are all sequences $\a$ with $|\a|=r$ (whence ``submodel"). The tensor product operation of $\inftyspiderup$ does not restrict to this subcategory, and as such we dispense with the categorical setting of categories and functors in favor of the algebraic setting of algebras and homomorphisms. Namely, we view the aforementioned subcategory as the superalgebra
\[
\W=\W(r):=\bigoplus_{\substack{\a,\b\\ |\a|=|\b|=r}}\Hom_{\inftyspiderup}(\a,\b),
\]
with multiplication given by composition of morphisms. In Section \ref{web-sec} we give a presentation of $\W$ by generators and relations. 

\subsection{Sergeev algebra and permutation supermodules}

The \emph{Sergeev superalgebra} $\H=\H_c(r)$ is isomorphic as a vector space to $C_r\otimes\C\frakS_r$, where $C_r$ is the Clifford superalgebra of rank $r$ and $\C\frakS_r$ is the symmetric group algebra on $r$ letters. The relations are such that both $C_r$ and $\C\frakS_r$ are subalgebras. See Definition \ref{sergeev-def} for the $\Z_2$-grading on $\H_c(r)$ and other details.

The discovery of $\H$ is due to Sergeev \cite{Se}, who established the existence of mutually centralizing actions between $\H$ and the Lie superalgebra $\q_n$ on the $r$-fold tensor product $(\C^{n|n})^{\otimes r}.$ This result is now known as \emph{Schur-Weyl-Sergeev duality}. On the other hand, it is now known (see e.g. \cite[Corollary 3.5]{BKl}) that $\H$ is Morita superequivalent to the \emph{twisted group algebra} $\EuScript{T}_r$ of $\frakS_r.$ The latter arises in the study of \emph{projective representations} of $\frakS_r$, i.e. homomorphisms from $\frakS_r$ into a projective general linear group. The Morita superequivalence implies that studying ordinary representations of $\H$ is equivalent to studying projective representations of $\frakS_r$, giving us extra incentive to understand the representation theory of $\H$.

Let $\lambda$ be a \emph{composition of $r$}, i.e. a finite sequence $\lambda=(\lambda_1,\dots,\lambda_n)$ of \emph{nonnegative} integers with $|\lambda|=r$. Associated to $\lambda$ is the \emph{Young subgroup} $\frakS_\lambda:=\frakS_{\lambda_1}\times\cdots\times\frakS_{\lambda_n}\subseteq\frakS_r.$ The \emph{permutation supermodule of shape $\lambda$} is the left $\H$-supermodule
\[
M^\lambda:=\H\otimes_{\C\frakS_\lambda}\text{Triv}_\lambda
\]
where Triv$_\lambda$ is the trivial $\frakS_\lambda$-module. To the author's knowledge, these modules first appeared in \cite{Se2}; they have since appeared in, for example, \cite{DW2,WW}. In Section \ref{perm-mods} we give an alternative, but isomorphic, definition of $M^\lambda.$

The permutation modules of $\C\frakS_r$, i.e. the left $\C\frakS_r$-modules of the form $\C\frakS_r\otimes_{\C\frakS_\lambda}\text{Triv}_\lambda$, are ubiquitous in the literature, as they were used by Specht to construct the irreducible representations of $\frakS_r$ in characteristic zero \cite{Sp}. As is well known, they admit a combinatorial interpretation as the span of of all \emph{tabloids of shape $\lambda$}. We extend this idea to $M^\lambda$ by defining \emph{super}tabloids of shape $\lambda$ (Definition \ref{supertab-def}), and showing that their span has the structure of an $\H$-supermodule isomorphic to $M^\lambda$. The idea of ``supertableaux" with entries coming from a signed alphabet is not new, however; see for example \cite{BR,CPT,H,LNS,Mu,St}.

\subsection{Main results}

This paper has two main results. The first is Theorem \ref{psi-map}, and consists of an isomorphism of superalgebras
\[
\psi\colon\W\xrightarrow{\sim}\End_{\H}(\textstyle\bigoplus_{\lambda\in\Lambda'(r)}M^\lambda).
\]
Here $\Lambda'(r)$ is a certain proper subset of the set of all compositions of $r$, although every isomorphism class of permutation supermodule has at least one representative in $\Lambda'(r)$. Moreover, $\psi$ induces superspace isomorphisms
\[
\psi_{\a,\b}\colon\Hom_{\inftyspiderup}(\a,\b)\xrightarrow{\sim}\Hom_{\H}(M^{\a},M^{\b})
\]
with the property that $\psi_{\a,\c}(g\circ f)=\psi_{\b,\c}(g)\circ\psi_{\a,\b}(f)$ for all $f\in\Hom_{\inftyspiderup}(\a,\b)$ and $g\in\Hom_{\inftyspiderup}(\b,\c).$ Now the entries of $\a$ and $\b$, i.e. the lower and upper boundaries of webs from $\a\to\b$, correspond to the rows of supertabloids. This affords a combinatorial description of the $\H$-morphisms\footnote{ For a superalgebra $A$, we shorten ``homomorphism of $A$-supermodules" to ``$A$-morphism".} between permutation supermodules.

At its core, Theorem \ref{psi-map} is a synthesis of two basic observations. The first is that the superalgebra $\Sdot_c(n,r):=\End_{\H}((\C^{n|n})^{\otimes r})$, the \emph{Schur superalgebra of type Q}, is isomorphic to both a quotient of $U(\q_n)$, the universal enveloping algebra of $\q_n$ (by Schur-Weyl-Sergeev duality), and to an endomorphism algebra of a sum of permutation supermodules much like the target of $\psi$ (by an easy computation). This object was studied by Brundan-Kleshchev \cite{BKl}, and has been presented by generators and relations as a quotient of $U(\q_n)$ by Du-Wan \cite{DW1} using ideas that go back to Doty-Giaquinto in type A \cite{DG}. The second observation is that an idempotented presentation of $\Sdot_c(n,r)$ \cite[Theorem 4.10]{DW1}, viewing it as a quotient of the idempotented version $\dot{U}(\q_n)$ of $U(\q_n)$, can be translated into webs of type Q by drawing the Chevalley generators as certain webs called \emph{ladders}. This is one of the key ideas in \cite{CKM}. The definition of the category $\inftyspiderup$ comes from a translation of $\dot{U}(\q_n)$ (for all $n$) into ladders, which is precisely the reason the web calculus here is the same as the one in \cite{BKu}.

The second main result is Corollary \ref{basis-corollary}, which identifies bases for the spaces $\Hom_{\H}(M^\lambda,M^\mu)$ in terms of webs. The strategy is to first produce bases of the morphism spaces $\Hom_{\inftyspiderup}(\a,\b)$ and then apply $\psi_{\a,\b}$. The basis of $\Hom_{\H}(M^\lambda,M^\mu)$ thus obtained is the type Q analog of that in type A indexed by $(\frakS_\mu,\frakS_\lambda)$-double cosets of $\frakS_r$ (e.g. \cite[Theorem 13.19]{Ja} and \cite[Theorem 4.7]{Ma}). 

We note that Du-Wan \cite[Proposition 5.2]{DW2} have given bases for the spaces $\Hom_{\H_{c,q}(r)}(M_q^\lambda,M_q^\mu)$, where $\H_{c,q}(r)$ is the quantum deformation of $\H_c(r)$ defined by Olshanski \cite{O} and $M_q^\lambda,M_q^\mu$ are its permutation supermodules. Since $\H_{c,1}(r)\simeq\H_c(r)$ and $M^\lambda_1=M^\lambda$, $M^\mu_1=M^\mu$, their basis specializes when $q=1$ to give a basis of $\Hom_{\H_c(r)}(M^\lambda,M^\mu)$. We emphasize therefore that the novelty in Corollary \ref{basis-corollary} is that the basis is given diagrammatically.

\subsection{Organization}

The paper is organized as follows. In Section 2 we provide a brief review of some background information on representations of superalgebras. Section 3 introduces the algebraic (as opposed to diagrammatic) players, namely $\Sdot_c(n,r)$, $\H_c(r)$, and the permutation supermodules $M^\lambda$, including their definition in terms of $\lambda$-supertabloids. In Section 4 we define the web superalgebra $W$ in terms of generators and relations, and derive a number of consequences needed for the main theorems involving special webs called \emph{clasp idempotents} and \emph{Sergeev diagrams}. Finally in Section 5 we state and prove the main theorems.

\subsection{Future work}

In type A, there is a basis of the Iwahori-Hecke algebra, the \emph{Murphy basis}, which can be ``lifted" to obtain a basis of the $q$-Schur algebra. This was first observed by Du-Scott \cite{DS}, and was done for the cyclotomic $q$-Schur algebra by Dipper-James-Mathas \cite{DJM} (see also \cite[\S3]{Ma}). Notably, all of these bases are cellular in the sense of Graham-Lehrer \cite{GL}, and those of the Schur algebras are indexed by pairs of certain tableaux (e.g. semistandard). It would be interesting to investigate whether a $\Z_2$-graded analog of this phenomenon exists for $\H_c(r)$ and $\Sdot_c(n,r)$, and if so, whether webs of type Q prove helpful toward its discovery. 

Finally, one could work out $q$-analogs of the results in this paper for $\H_{c,q}(r)$ and its permutation supermodules. Davidson, Kujawa, and the author are currently investigating $q$-analogs of webs of type Q \cite{BDK}, which would lead to such results in the same way that \cite{BKu} lead to the present paper.

\subsection{Conventions}

All vector spaces are over the field $\C$ of complex numbers.


\subsection{Acknowledgments}


I'm indebted to Jonathan Brundan, Sabin Cautis, Nicholas Davidson, and Jieru Zhu for helpful comments. I'm especially grateful to my PhD advisor, Jonathan Kujawa, for his patient guidance during every stage of this project.


\section{Preliminaries}


In this section, we briefly touch on some topics needed for the main body of the paper.


\subsection{Background on superalgebra}


This subsection includes a review of basic superalgebra. The reader is referred to \cite{CW,Kl} for more information.

Let $\Z_2=\{\bar{0},\bar{1}\}$ be the field with two elements. A \emph{superspace} is a $\Z_2$-graded vector space $V=V_{\bar{0}}\oplus V_{\bar{1}}$. Elements of $V_{\0}$ (resp.\ $V_{\1}$) are said to have parity $\0$ or, equivalently, to be \emph{even} (resp.\ parity $\1$ or \emph{odd}). Given a homogeneous element $v \in V$, we write $|v| \in \Z_{2}$ for the parity of $v$.

The space $\Hom(V,W)$ of linear maps between superspaces $V,W$ is a superspace via the $\Z_2$-grading given by $|f|=i$ if $f(V_j)\subseteq W_{i+j}$ for $j\in\Z_2.$ For this reason, elements of $\Hom(V,W)_{\0}$ (resp. $\Hom(V,W)_{\bar{1}}$) are called \emph{parity-preserving} (resp. \emph{parity-reversing}). The tensor product $V\otimes W$ is also a superspace, via the $\Z_2$-grading given by $\ol{v\otimes w}=\ol{v}+\ol{w}$ for homogeneous $v\in V, w\in W.$

A \emph{superalgebra} is a $\Z_2$-graded associative algebra $A=A_{\0}\oplus A_{\1}$, and a \emph{homomorphism} of superalgebras is a parity-preserving algebra homomorphism. A \emph{supermodule} over a superalgebra $A$ consists of a superspace $V$ and a parity-preserving algebra homomorphism $A\to\End(V)$.


\subsection{Locally unital superalgebras}


Many of the superalgebras $A$ in this paper are \emph{locally unital} in the sense of \cite{BD}. This means $A$ has a system $(1_x)_{x\in X}$ of pairwise orthogonal idempotents such that
\[
A=\bigoplus_{x,y\in X}1_y A1_x.
\]
Further, we say a superalgebra homomorphism $\phi\colon A\to B$ is \emph{locally unital} if $A$ and $B$ are locally unital superalgebras and $\phi$ takes distinguished idempotents to distinguished idempotents. In that case one has linear maps
\[
\phi_{x,y}\colon1_y A1_x\to\phi(1_y)B\phi(1_x).
\]


\subsection{Compositions of $r$}


The permutation supermodules $M^\lambda$ of $\H$ are indexed by \emph{compositions of r}: tuples $\lambda=(\lambda_1,\dots,\lambda_n)$ of nonnegative integers summing to $r$. We denote by $\Lambda(n,r)$ the set of compositions of $r$ with $n$ entries. For $\lambda\in\Lambda(n,r)$, let $l(\lambda)$ be the position of the last nonzero entry of $\lambda$, i.e. the smallest positive integer such that $\lambda_i=0$ if $i> l(\lambda)$. For example, $\lambda=(7,0,1,6,4,0,0)$ is an element of $\Lambda(7,18)$ with $l(\lambda)=5$. Throughout the paper, we denote by $\omega=\omega_r$ the composition $(1,\dots,1)\in\Lambda(r,r).$ 


\section{Sergeev superalgebra and permutation supermodules}


In this section, we recall the Sergeev superalgebra and its permutation supermodules. We also review the Schur superalgebra of type Q, which we will use to define the permutation supermodules and, later, to connect morphisms to webs.


\subsection{Schur superalgebra of type Q}


We now recall the Schur superalgebra of type Q, $\Sdot_c(n,r)$. For $1\leq i\leq n-1$, let $\alpha_i:=(0,\dots,0,1,-1,0,\dots,0)\in\Z^n$ where the $1$ is in the $i^{\th}$ spot. Expressions of the form $\lambda\pm\alpha_i$ for $\lambda\in\Lambda(n,r)$ refer to component-wise addition. We give an idempotented presentation which is slightly different, but immediately deducible, from \cite[Theorem 4.10]{DW1}.

\begin{definition}
The \emph{Schur superalgebra of type Q} is the associative unital superalgebra $\Sdot_c(n,r)$ generated by the even elements $1_\lambda,e_i,f_i$ and odd elements $e_{\bar{i}},f_{\bar{i}},h_{\bar{j}}$ for $\lambda\in\Lambda(n,r)$, $1\leq i\leq n-1$, and $1\leq j\leq n$, subject to the relations
\beq\bg\label{S1}
1_\lambda1_\mu=\delta_{\lambda,\mu}1_\lambda,\quad\textstyle\sum_{\lambda\in\Lambda(n,r)}1_\lambda=1,\quad h_{\bar{i}}1_\lambda=1_\lambda h_{\bar{i}},\\[1ex]
(h_{\bar{i}}h_{\bar{j}}+h_{\bar{j}}h_{\bar{i}})1_\lambda=\delta_{i,j}2\lambda_i1_\lambda,\quad h_{\bar{i}}1_\lambda=0\text{ if }\lambda_i=0,\tag{S1}
\eg\eeq
\beq\bg\label{S2}
e_i1_\lambda=\begin{cases} 1_{\lambda+\alpha_i}e_i & \text{if }\lambda+\alpha_i\in\Lambda(n,r)\\ 0 & \text{otherwise}\end{cases},\quad
f_i1_\lambda=\begin{cases} 1_{\lambda-\alpha_i}f_i & \text{if }\lambda-\alpha_i\in\Lambda(n,r)\\ 0 & \text{otherwise}\end{cases},\\[1ex]
e_{\bar{i}}1_\lambda=\begin{cases} 1_{\lambda+\alpha_i}e_{\bar{i}} & \text{if }\lambda+\alpha_i\in\Lambda(n,r)\\ 0 & \text{otherwise}\end{cases},\quad
f_{\bar{i}}1_\lambda=\begin{cases} 1_{\lambda-\alpha_i}f_{\bar{i}} & \text{if }\lambda-\alpha_i\in\Lambda(n,r)\\ 0 & \text{otherwise}\end{cases},\\[1ex]
1_\lambda e_i=\begin{cases} e_i1_{\lambda-\alpha_i} & \text{if }\lambda-\alpha_i\in\Lambda(n,r)\\ 0 & \text{otherwise}\end{cases},\quad
1_\lambda f_i=\begin{cases} f_i1_{\lambda+\alpha_i} & \text{if }\lambda+\alpha_i\in\Lambda(n,r)\\ 0 & \text{otherwise}\end{cases},\\[1ex]
1_\lambda e_{\bar{i}}=\begin{cases} e_{\bar{i}}1_{\lambda-\alpha_i} & \text{if }\lambda-\alpha_i\in\Lambda(n,r)\\ 0 & \text{otherwise}\end{cases},\quad
1_\lambda f_{\bar{i}}=\begin{cases} f_{\bar{i}}1_{\lambda+\alpha_i} & \text{if }\lambda+\alpha_i\in\Lambda(n,r)\\ 0 & \text{otherwise}\end{cases},\tag{S2}\\[3ex]
\eg\eeq
\beq\bg\label{S3}
(h_{\bar{i}}e_i-e_ih_{\bar{i}})1_\lambda=e_{\bar{i}}1_\lambda,\quad (h_{\bar{i}}e_{i-1}-e_{i-1}h_{\bar{i}})1_\lambda=-e_{\bar{i-1}}1_\lambda,\\[1ex]
(h_{\bar{i}}f_i-f_ih_{\bar{i}})1_\lambda=-f_{\bar{i}}1_\lambda,\quad (h_{\bar{i}}f_{i-1}-f_{i-1}h_{\bar{i}})1_\lambda= f_{\bar{i-1}}1_\lambda,\\[1ex]
(h_{\bar{i}}e_{\bar{i}}+e_{\bar{i}}h_{\bar{i}})1_\lambda=e_i1_\lambda,\quad (h_{\bar{i}}e_{\bar{i-1}}+e_{\bar{i-1}}h_{\bar{i}})1_\lambda= e_{i-1}1_\lambda,\\[1ex]
(h_{\bar{i}}f_{\bar{i}}+f_{\bar{i}}h_{\bar{i}})1_\lambda=f_i1_\lambda,\quad (h_{\bar{i}}f_{\bar{i-1}}+f_{\bar{i-1}}h_{\bar{i}})1_\lambda= f_{i-1}1_\lambda,\\[1ex]
(h_{\bar{i}}e_j-e_jh_{\bar{i}})1_\lambda=(h_{\bar{i}}f_j-f_jh_{\bar{i}})1_\lambda=0\text{ for }i\neq j,j+1\\[1ex]
(h_{\bar{i}}e_{\bar{j}}+e_{\bar{j}}h_{\bar{i}})1_\lambda=(h_{\bar{i}}f_{\bar{j}}+f_{\bar{j}}h_{\bar{i}})1_\lambda=0\text{ for }i\neq j,j+1,\tag{S3}\\[2ex]
\eg\eeq
\beq\bg\label{S4}
(e_if_j-f_je_i)1_\lambda=\delta_{i,j}(\lambda_i-\lambda_{i+1})1_\lambda,\\[1ex]
(e_{\bar{i}}f_{\bar{j}}+f_{\bar{j}}e_{\bar{i}})1_\lambda=\delta_{i,j}(\lambda_i+\lambda_{i+1})1_\lambda,\\[1ex]
(e_if_{\bar{j}}-f_{\bar{j}}e_i)1_\lambda=\delta_{i,j}(h_{\bar{i}}-h_{\bar{i+1}})1_\lambda,\\[1ex]
(e_{\bar{i}}f_j-f_je_{\bar{i}})1_\lambda=\delta_{i,j}(h_{\bar{i}}-h_{\bar{i+1}})1_\lambda,\tag{S4}\\[2ex]
\eg\eeq
\beq\bg\label{S5}
e_{\bar{i}}^2 1_\lambda= f_{\bar{i}}^2 1_\lambda=0,\\[1ex]
(e_ie_{\bar{j}}-e_{\bar{j}}e_i)1_\lambda=(f_if_{\bar{j}}-f_{\bar{j}}f_i)1_\lambda=0\text{ for }i\neq j\pm 1,\\[1ex]
(e_ie_j-e_je_i)1_\lambda=(f_if_j-f_jf_i)1_\lambda=0\text{ for }|i-j|>1,\\[1ex]
(e_{\bar{i}}e_{\bar{j}}+e_{\bar{j}}e_{\bar{i}})1_\lambda=(f_{\bar{i}}f_{\bar{j}}+f_{\bar{j}}f_{\bar{i}})1_\lambda=0\text{ for }|i-j|>1,\\[1ex]
(e_ie_{i+1}-e_{i+1}e_i)1_\lambda=(e_{\bar{i}}e_{\bar{i+1}}+e_{\bar{i+1}}e_{\bar{i}})1_\lambda,\\[1ex]
(e_ie_{\bar{i+1}}-e_{\bar{i+1}}e_i)1_\lambda=(e_{\bar{i}}e_{i+1}-e_{i+1}e_{\bar{i}})1_\lambda,\\[1ex]
(f_{i+1}f_i-f_if_{i+1})1_\lambda=(f_{\bar{i+1}}f_{\bar{i}}+f_{\bar{i}}f_{\bar{i+1}})1_\lambda,\\[1ex]
(f_{\bar{i+1}}f_i-f_if_{\bar{i+1}})1_\lambda=(f_{i+1}f_{\bar{i}}-f_{\bar{i}}f_{i+1})1_\lambda,\tag{S5}\\[2ex]
\eg\eeq
\beq\bg\label{S6}
(e_i^{(2)}e_j-e_ie_je_i+e_je_i^{(2)})1_\lambda=(f_i^{(2)}f_j-f_if_jf_i+f_jf_i^{(2)})1_\lambda=0\text{ for }i=j\pm 1,\\[1ex]
(e_i^{(2)}e_{\bar{j}}-e_ie_{\bar{j}}e_i+e_{\bar{j}}e_i^{(2)})1_\lambda=(f_i^{(2)}f_{\bar{j}}-f_if_{\bar{j}}f_i+f_{\bar{j}}f_i^{(2)})1_\lambda=0\text{ for }i=j\pm 1\tag{S6}\\[1ex]
\eg\eeq
where we denote by $e_i^{(\,j)},f_i^{(\,j)}$ the \emph{divided powers}
\[
e_i^{(\,j)}:=\frac{e_i^{\,j}}{j!}\,,\quad f_i^{(\,j)}:=\frac{f_i^{\,j}}{j!}\,.
\]
\end{definition}



It is clear from the definition that $\Sdot_c(n,r)$ admits the superspace decomposition
\beq\label{S-decomp}
\Sdot_c(n,r)=\bigoplus_{\lambda,\mu\in\Lambda(n,r)}1_\mu\Sdot_c(n,r)1_\lambda
\eeq
and is locally unital with distinguished idempotents $(1_\lambda)_{\lambda\in\Lambda(n,r)}$. We will be especially interested in the case of $n=r$, for which we define the shorthand
\[
\Sdot_c(r):=\Sdot_c(r,r),\quad\Lambda(r):=\Lambda(r,r).
\] 

Let $\Lambda'(n,r)$ be the subset of $\Lambda(n,r)$ consisting of compositions $\lambda$ whose only zeros are trailing, i.e. for $1\leq i\leq n-1$, $\lambda_i=0$ implies $\lambda_{i+1}=\lambda_{i+2}=\cdots=\lambda_n=0$. Let also
\[
\Lambda'(r):=\Lambda'(r,r).
\]
For the sake of streamlining the combinatorics of webs, it will be convenient to truncate $\Sdot_c(r)$ along $\Lambda'(r)$, hence the following definition.

\begin{definition}\label{S-plus}
We define the superalgebra $\Sdot'_c(r)$ to be
\[
\Sdot'_c(r):=\bigoplus_{\lambda,\mu\in\Lambda'(r)}1_\mu\Sdot_c(r)1_\lambda.
\]
It is locally unital with distinguished idempotents $(1_\lambda)_{\lambda\in\Lambda'(r)}.$
\end{definition}

Note that while we have a canonical surjection $\Sdot_c(r)\onto\Sdot'_c(r)$ and injection $\Sdot'_c(r)\into\Sdot_c(r)$, the latter does not preserve the identity.

%
%



\subsection{Sergeev superalgebra}


\begin{definition}\label{sergeev-def}
The \emph{Sergeev superalgebra of rank $r$} is the associative unital superalgebra $\H=\H_c(r)$ generated by the even elements $s_1,\dots,s_{r-1}$ and odd elements $c_1,\dots,c_r$, subject to the relations
\beq\label{hecke-relations}\bg
c_i^2=1,\quad c_ic_j=-c_jc_i\text{ if }i\neq j,\\
s_i^2=1,\quad s_is_{i+1}s_i=s_{i+1}s_is_{i+1},\quad s_is_j=s_js_i\text{ if }i\neq j\pm1,\\
c_is_i=s_ic_{i+1},\quad c_{i+1}s_i=s_ic_i
\eg\eeq
for admissible $i,j.$
\end{definition}

There are two important subsuperalgebras of $\H$, namely the \emph{Clifford superalgebra} $C_r$, generated by $c_1,\dots,c_r$, and the \emph{symmetric group algebra} $\C\frakS_r$, generated by $s_1,\dots,s_{r-1}$. It is well known (e.g. \cite[\S13]{Kl}) that $\H$ admits the basis
\[
\{c_1^{a_1}\cdots c_r^{a_r}\sigma\mid a_1,\dots,a_r\in\{0,1\},\sigma\in\frakS_r\},
\]
which we will refer to as the \emph{standard basis}. 
 We have the \emph{canonical embeddings} $\H_c(k)\into\H_c(r)$ for $k<r$ given by sending the generators of $\H_c(k)$ to those of $\H_c(r)$ with the same names.


\subsection{Permutation supermodules}\label{perm-mods}


We now use supercommuting actions of $\Sdot_c(n,r)$ and $\H$ on the $r$-fold tensor product $(\C^{n|n})^{\otimes r}$ to define the permutation supermodules of $\H$.

Define the sets
\[
I(n|0)=\{1,\dots,n\},\quad I(0|n)=\{\bar{1},\dots,\bar{n}\},\quad I(n|n)=I(n|0)\sqcup I(0|n).
\]
There is an involution on $I(n|n)$ interchanging $I(n|0)$ and $I(0|n)$ given by barring every element, where we declare that bars cancel each other in pairs, e.g. $\bar{\bar{5}}=5$. For $i\in I(n|n)$ we define $\ul{i}\in I(n|0)$ to be the unbarred version of $i$, e.g. $\ul{5}=\ul{\bar{5}}=5$, and $|i|\in\Z_2$ to be $\0$ if and only if $i\in I(n|0)$. 

Let $V=\C^{n|n}$, i.e. the superspace $\C^n\oplus\C^n$. We fix a homogeneous basis $\{v_1,\dots,v_n,v_{\bar{1}},\dots, v_{\bar{n}}\}$ of $V$ where $|v_i|=|i|$. 
 In turn, we have a monomial basis
\[
\{v_\i=v_{i_1}\otimes\cdots\otimes v_{i_r}\mid \i=(i_1,\dots,i_r)\in I(n|n)^r\}
\]
 of $V^{\otimes r}$, where $|v_\i|=|v_{i_1}|+\cdots+|v_{i_r}|$. 

For $\lambda\in\Lambda(n,r)$, we define $V^{\otimes r}_\lambda=(V^{\otimes r})_\lambda$ to be the subsuperspace of $V^{\otimes r}$ spanned by the monomials $v_\i$ with the property that for $1\leq k\leq n$, the number of tensorands $v_{i_j}$ with $\ul{i_j}=k$ is $\lambda_k$. Clearly, $V^{\otimes r}=\bigoplus_{\lambda\in\Lambda(n,r)}V^{\otimes r}_\lambda$.

The action of $\Sdot_c(n,r)$ on $V^{\otimes r}$ is given as follows. For $\lambda\in\Lambda(n,r)$, $1_\lambda$ projects $V^{\otimes r}$ onto $V^{\otimes r}_\lambda$, and for $x\in\{e_i,f_i,e_{\bar{i}},f_{\bar{i}},h_{\bar{j}}\mid 1\leq i\leq n-1,1\leq j\leq n\}$ the action of $x$ on a monomial $v_{i_1}\otimes\cdots\otimes v_{i_r}$ is given by
\bea
& & x.v_{i_1}\otimes v_{i_2}\otimes v_{i_3}\otimes\cdots\otimes v_{i_r}\\[1ex]
&+& (-1)^{|x|\cdot |i_1|}v_{i_1}\otimes x.v_{i_2}\otimes v_{i_3}\otimes\cdots\otimes v_{i_r}\\[1ex]
&+& (-1)^{|x|\cdot(|i_1|+|i_2|)}v_{i_1}\otimes v_{i_2}\otimes x.v_{i_3}\otimes\cdots\otimes v_{i_r}\\
&\vdots& \\
&+& (-1)^{|x|\cdot(|i_1|+\cdots+|i_{r-1}|)}v_{i_1}\otimes v_{i_2}\otimes v_{i_3}\otimes\cdots\otimes x.v_{i_r}
\eea
where
\[
e_i.v_k=\delta_{i,\,\ul{k}-1}v_{k-1},\quad f_i.v_k=\delta_{i,\,\ul{k}}v_{k+1},\quad e_{\bar{i}}.v_k=\delta_{i,\,\ul{k}-1}v_{\ol{k-1}},\quad f_{\bar{i}}.v_k=\delta_{i,\,\ul{k}}v_{\ol{k+1}},\quad h_{\bar{j}}.v_k=\delta_{j,\,\ul{k}}v_{\ol{k}}
\]
for $k\in I(n|n)$.

The action of $\H$ on $V^{\otimes r}$ is given as follows. Let $P$ be the odd automorphism of $V$ given by $P(v_i)=(-1)^{|i|}\sqrt{-1}v_{\bar{i}}$ for $i\in I(n|n)$. Then the action of $\H$ is given by
\[\bg
s_i.(u_1\otimes\cdots\otimes u_r)=(-1)^{|u_i|\cdot|u_{i+1}|}u_1\otimes\cdots\otimes u_{i-1}\otimes u_{i+1}\otimes u_i\otimes u_{i+2}\otimes\cdots\otimes u_r,\\
c_j.(u_1\otimes\cdots\otimes u_r)=(-1)^{|u_1|+\cdots+|u_{j-1}|}u_1\otimes\cdots\otimes u_{j-1}\otimes P(u_j)\otimes u_{j+1}\otimes\cdots\otimes u_r
\eg\]
for $1\leq i\leq r-1$, $1\leq j\leq r$, and $u_1,\dots,u_r\in V.$

It is easily checked that the actions of $\Sdot_c(n,r)$ and $\H$ on $V^{\otimes r}$ supercommute. There are several consequences of this. First, it implies that $V^{\otimes r}_\lambda$ is an $\H$-supermodule for $\lambda\in\Lambda(n,r)$, which we call the \emph{permutation supermodule of shape $\lambda$} and denote
\[
M^\lambda:=V^{\otimes r}_\lambda.
\]
It also means we have a homomorphism
\[
\phi^n\colon\Sdot_c(n,r)\to\End_{\H}(V^{\otimes r}),
\]
which, by combining \cite[Theorem 3]{Se} and \cite[Theorem 4.10]{DW1}, is known to be an isomorphism. Further, using the decomposition $V^{\otimes r}=\bigoplus_{\lambda\in\Lambda(n,r)}M^\lambda$ we have
\[
\End_{\H}(V^{\otimes r})=\bigoplus_{\lambda,\mu\in\Lambda(n,r)}\Hom_{\H}(M^\lambda,M^\mu).
\]
The above constitutes a decomposition of $\End_{\H}(V^{\otimes r})$ as a locally unital superalgebra with distinguished idempotents $(\phi^n(1_\lambda))_{\lambda\in\Lambda(n,r)}$. Summarizing:

\begin{proposition}\label{phi-map}
There exists a locally unital isomorphism
\[
\phi^n\colon\Sdot_c(n,r) \xrightarrow{\sim} \End_{\H}(\textstyle\bigoplus_{\lambda\in\Lambda(n,r)}M^\lambda).
\]
In particular, we have superspace isomorphisms
\[
\phi^n_{\lambda,\mu}\colon 1_\mu\Sdot_c(n,r)1_\lambda\xrightarrow{\sim}\Hom_{\H}(M^\lambda,M^\mu)
\]
for $\lambda,\mu\in\Lambda(n,r)$.
\end{proposition}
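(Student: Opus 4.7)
The plan is to reduce the proposition to two already-cited facts and then unpack the locally unital structure. The key point is that essentially all the nontrivial content is supplied by \cite[Theorem 3]{Se} (Schur-Weyl-Sergeev duality) and \cite[Theorem 4.10]{DW1} (the idempotented presentation of $\Sdot_c(n,r)$): together these say precisely that the map sending the generators of $\Sdot_c(n,r)$ to the operators defined on $V^{\otimes r}$ in the preceding discussion extends to a superalgebra isomorphism
\[
\phi^n\colon\Sdot_c(n,r)\xrightarrow{\sim}\End_{\H}\bigl(V^{\otimes r}\bigr).
\]
So the first step is simply to record this as the starting point.

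Next I would identify the target with the claimed direct sum. The decomposition $V^{\otimes r}=\bigoplus_{\lambda\in\Lambda(n,r)} M^\lambda$ of $\H$-supermodules, combined with the fact (already displayed in the excerpt) that
\[
\End_{\H}(V^{\otimes r})=\bigoplus_{\lambda,\mu\in\Lambda(n,r)}\Hom_{\H}(M^\lambda,M^\mu),
\]
exhibits $\End_{\H}(V^{\otimes r})$ as a locally unital superalgebra whose distinguished idempotents are the projections $\pi_\lambda\colon V^{\otimes r}\onto M^\lambda$. Using this identification I will rewrite the target of $\phi^n$ as $\End_{\H}\bigl(\bigoplus_\lambda M^\lambda\bigr)$.

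For the locally unital claim, I need only verify that $\phi^n$ carries the distinguished idempotents of $\Sdot_c(n,r)$ to those of the endomorphism algebra, i.e.\ that $\phi^n(1_\lambda)=\pi_\lambda$. This is immediate from the prescribed action: by definition $1_\lambda$ acts on $V^{\otimes r}$ as the projection onto $V^{\otimes r}_\lambda=M^\lambda$. Since $\phi^n$ is an isomorphism that respects distinguished idempotents, for each pair $\lambda,\mu$ it restricts to a superspace isomorphism $\phi^n_{\lambda,\mu}\colon 1_\mu\Sdot_c(n,r)1_\lambda\xrightarrow{\sim}\pi_\mu\End_{\H}(V^{\otimes r})\pi_\lambda=\Hom_{\H}(M^\lambda,M^\mu)$, which is the second assertion.

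I do not anticipate a genuine obstacle: the content is entirely in the two cited theorems, and what remains is bookkeeping with idempotents. The only place where care is needed is checking that the $\H$- and $\Sdot_c(n,r)$-actions actually supercommute (so that $\phi^n$ lands in $\End_{\H}$ rather than merely in $\End$), and this is the routine but slightly tedious sign-tracking calculation implicit in the statement just before the proposition; it would be the one step I would spell out in any detail if a reader asked, working generator by generator on monomial basis vectors $v_{\i}$ and using that the sign conventions in the $\Sdot_c(n,r)$- and $\H$-actions are compatible.
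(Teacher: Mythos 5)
Your proposal matches the paper's argument: the paper likewise obtains $\phi^n$ as an isomorphism onto $\End_{\H}(V^{\otimes r})$ by combining Sergeev's duality with the Du--Wan presentation, then decomposes $V^{\otimes r}=\bigoplus_\lambda M^\lambda$ and identifies the distinguished idempotents $\phi^n(1_\lambda)$ with the projections onto the summands. Your added remark about verifying that the two actions supercommute is exactly the check the paper dispatches with ``it is easily checked,'' so nothing is missing.
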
 


\subsection{Supertabloids}\label{supertabloids-section}


There is also a combinatorial interpretation of $M^\lambda$, which we now construct. Define the ordered alphabet $D=\{1'<1<2'<2<3'<3\cdots\}$. We say the elements $1',2',3',\dots$ are \emph{primed}, and equip $D$ with the $\Z_2$-grading given by $|d|=\ol{1}$ if and only if $d$ is primed. There is a parity-reversing set involution on $D$ given by priming every element, where we declare that $d''=d$ for an unprimed $d\in D$. For $k>0$ let $D_k=\{1',1,2',2,\dots,k',k\}\subset D.$

\begin{definition}\label{supertab-def}
For $\lambda\in\Lambda(n,r)$ and $\mu\in\Lambda(l,r)$, a \emph{$\lambda$-supertabloid of type $\mu$} is an arrangement $T$ of $r$ elements of $D_r$ into $n$ rows, such that
\be
\item for $1\leq i\leq n$, the $i^{\th}$ row of $T$ has $\lambda_i$ entries, and
\item for $1\leq j\leq l$, exactly $\mu_j$ entries of $T$ are elements of $\{j,j'\}$.
\ee
We declare two $\lambda$-supertabloids of type $\mu$ to be equal if they are identical up to permutations of the entries which stabilize the rows. 
\end{definition}

Below are four $(2,1,3)$-supertabloids of type $\omega,$ $(4,2)$, $(1,1,1,2,1)$, and (1,3,2), respectively.
\[
\xy
(0,0)*{
\bt[scale=0.5]
	\draw (0,0) to (2,0);
	\draw (0,-1) to (2,-1);
	\draw (0,-2) to (3,-2);
	\draw (0,-3) to (3,-3);
	\node at (0.5,-0.5) { $3'$};
	\node at (1.5,-0.5) { $6'$};
	\node at (0.5,-1.5) { $2$};
	\node at (0.5,-2.5) { $1$};
	\node at (1.5,-2.5) { $4'$};
	\node at (2.5,-2.5) { $5$};
\et
};
\endxy\quad\quad
\xy
(0,0)*{
\bt[scale=0.5]
	\draw (0,0) to (2,0);
	\draw (0,-1) to (2,-1);
	\draw (0,-2) to (3,-2);
	\draw (0,-3) to (3,-3);
	\node at (0.5,-0.5) { $1'$};
	\node at (1.5,-0.5) { $2'$};
	\node at (0.5,-1.5) { $1$};
	\node at (0.5,-2.5) { $1$};
	\node at (1.5,-2.5) { $1$};
	\node at (2.5,-2.5) { $2'$};
\et
};
\endxy\quad\quad
\xy
(0,0)*{
\bt[scale=0.5]
	\draw (0,0) to (2,0);
	\draw (0,-1) to (2,-1);
	\draw (0,-2) to (3,-2);
	\draw (0,-3) to (3,-3);
	\node at (0.5,-0.5) { $4'$};
	\node at (1.5,-0.5) { $5'$};
	\node at (0.5,-1.5) { $4$};
	\node at (0.5,-2.5) { $1$};
	\node at (1.5,-2.5) { $2'$};
	\node at (2.5,-2.5) { $3$};
\et
};
\endxy\quad\quad
\xy
(0,0)*{
\bt[scale=0.5]
	\draw (0,0) to (2,0);
	\draw (0,-1) to (2,-1);
	\draw (0,-2) to (3,-2);
	\draw (0,-3) to (3,-3);
	\node at (0.5,-0.5) { $2$};
	\node at (1.5,-0.5) { $3'$};
	\node at (0.5,-1.5) { $3$};
	\node at (0.5,-2.5) { $1'$};
	\node at (1.5,-2.5) { $2'$};
	\node at (2.5,-2.5) { $2$};
\et
};
\endxy
\]

Let $\T(\lambda,\mu)$ be the set of all $\lambda$-supertabloids of type $\mu$. We put a $\Z_2$-grading on $\T(\lambda,\mu)$ by defining $|T|=\sum |d|$, summing over all entries $d$ of $T$. As we will be especially interested in supertabloids of type $\omega$, we define the shorthand $\T(\lambda):=\T(\lambda,\omega)$. Further, for $T\in\T(\lambda)$ and $1\leq i\leq r$, we call the unique entry $d$ of $T$ with $d\in\{i,i'\}$ the \emph{$i^{\th}$ entry of $T$} and denote it $T_i$.

There is a parity-preserving bijection between the monomial basis of $M^\lambda$ and $\T(\lambda)$. Indeed, we construct the supertabloid $T\in\T(\lambda)$ associated to $v_\i\in M^\lambda$ by placing a $j$ (resp. a $j'$) in row $\ul{i_j}$ if $i_j\in I(n|0)$ (resp. if $i_j\in I(0|n)$) for $1\leq j\leq r$. For example, the leftmost $(2,1,3)$-supertabloid above corresponds to $v_\i=v_3v_2v_{\ol{1}}v_{\ol{3}}v_3v_{\ol{1}}$ (omitting the $\otimes$ symbols).

We impose an $\H$-supermodule structure on the superspace spanned by $\T(\lambda)$ by transporting the structure from $M^\lambda$ along the above bijection. First we need some notation. For $\lambda\in\Lambda(n,r)$ and $T\in\T(\lambda)$, define $T_{i\leftrightarrow i+1}$ and $T_{j\to j'}$ to be the $\lambda$-supertabloids obtained from $T$ by interchanging $T_i$ and $T_{i+1}$ and by priming $T_j$, respectively, for $1\leq i\leq r-1$ and $1\leq j\leq r.$ Then the following may be verified by direct calculation.

\begin{proposition}
For $\lambda\in\Lambda(n,r)$, $M^\lambda$ is isomorphic as an $\H$-supermodule to the superspace spanned by $\T(\lambda)$ under the action
\[
s_i.T = (-1)^{|T_i|\cdot|T_{i+1}|}T_{i\leftrightarrow i+1},\quad c_j.T = (-1)^{|T_1|+\cdots+|T_{j}|}\sqrt{-1}\,T_{j\to j'}
\]
for $1\leq i\leq r-1$, $1\leq j\leq r$, and $T\in\T(\lambda)$.
\end{proposition}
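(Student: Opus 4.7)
The proposition asserts that the bijection $\Phi$ between the monomial basis $\{v_\i\}$ of $M^\lambda$ and the set $\T(\lambda)$ sketched just before the statement extends to an isomorphism of $\H$-supermodules. My plan is to verify this directly: first confirm that $\Phi$ is parity-preserving, and then check $\H$-equivariance on the Sergeev generators $s_k$ and $c_j$ separately. The parity check is immediate from the construction of $\Phi$: since $T_j$ is primed iff $i_j$ is barred, we have $|T_j| = |i_j|$ for each $j$, hence $|T| = \sum_j |T_j| = \sum_j |i_j| = |v_\i|$.

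For equivariance on $s_k$: one computes $s_k.v_\i = (-1)^{|i_k||i_{k+1}|}\, v_{\i'}$, where $\i'$ is obtained from $\i$ by transposing the entries in positions $k$ and $k+1$. Applying $\Phi$ to $v_{\i'}$, the label $k$ in the new supertabloid lands at row $\ul{i_{k+1}}$ with parity $|i_{k+1}|$ --- exactly the (row, parity) data previously attached to $T_{k+1}$ --- and symmetrically for label $k+1$. This is precisely the operation $T \mapsto T_{k \leftrightarrow k+1}$, and the sign matches since $|T_k| = |i_k|$.

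For equivariance on $c_j$: unpacking the odd map $P$ gives $c_j.v_\i = (-1)^{|i_1|+\cdots+|i_{j-1}|}\cdot(-1)^{|i_j|}\sqrt{-1}\, v_{\i''}$, where $\i''$ equals $\i$ with $i_j$ replaced by $\bar{i_j}$. Since $\ul{\bar{i_j}} = \ul{i_j}$, only the parity of label $j$ flips under $\Phi$, producing $T_{j \to j'}$; the aggregated sign collapses to $(-1)^{|T_1|+\cdots+|T_j|}\sqrt{-1}$, matching the claim.

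The argument is essentially routine bookkeeping, with no serious obstacle. The one potentially confusing point worth flagging is the interpretation of $T_{i \leftrightarrow i+1}$: it must be read as exchanging the (row, parity) data attached to the two labels, not as literally swapping two cells of the diagram. A small example such as $r=2$, $\lambda=(1,1)$, $v_\i = v_1 \otimes v_{\bar{2}}$ (whose $\Phi$-image has $1$ in row $1$ and $2'$ in row $2$, and for which $s_1.v_\i = v_{\bar{2}} \otimes v_1$ corresponds to the supertabloid with $2$ in row $1$ and $1'$ in row $2$) confirms that this is the only reading consistent with the claimed formula.
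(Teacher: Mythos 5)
Your verification is correct and is precisely the direct calculation that the paper invokes but omits (the paper simply states that the proposition "may be verified by direct calculation"). The sign bookkeeping for both $s_i$ (via $|T_k|=|i_k|$) and $c_j$ (absorbing the $(-1)^{|i_j|}$ from $P$ into the sum up to $|T_j|$) matches the paper's conventions, as does your reading of $T_{i\leftrightarrow i+1}$, which is confirmed by the worked example following the proposition.
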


For example,
\[
s_5.\left(\,
\xy
(0,0)*{
\bt[scale=0.5]
	\draw (0,0) to (2,0);
	\draw (0,-1) to (2,-1);
	\draw (0,-2) to (3,-2);
	\draw (0,-3) to (3,-3);
	\node at (0.5,-0.5) { $3'$};
	\node at (1.5,-0.5) { $6'$};
	\node at (0.5,-1.5) { $2$};
	\node at (0.5,-2.5) { $1$};
	\node at (1.5,-2.5) { $4'$};
	\node at (2.5,-2.5) { $5$};
\et
};
\endxy\,\right) \ = \ 
\xy
(0,0)*{
\bt[scale=0.5]
	\draw (0,0) to (2,0);
	\draw (0,-1) to (2,-1);
	\draw (0,-2) to (3,-2);
	\draw (0,-3) to (3,-3);
	\node at (0.5,-0.5) { $3'$};
	\node at (1.5,-0.5) { $5$};
	\node at (0.5,-1.5) { $2$};
	\node at (0.5,-2.5) { $1$};
	\node at (1.5,-2.5) { $4'$};
	\node at (2.5,-2.5) { $6'$};
\et
};
\endxy \ ,\quad
c_3.\left(\,
\xy
(0,0)*{
\bt[scale=0.5]
	\draw (0,0) to (2,0);
	\draw (0,-1) to (2,-1);
	\draw (0,-2) to (3,-2);
	\draw (0,-3) to (3,-3);
	\node at (0.5,-0.5) { $3'$};
	\node at (1.5,-0.5) { $6'$};
	\node at (0.5,-1.5) { $2$};
	\node at (0.5,-2.5) { $1$};
	\node at (1.5,-2.5) { $4'$};
	\node at (2.5,-2.5) { $5$};
\et
};
\endxy\,\right) \ = \ -\sqrt{-1} \ 
\xy
(0,0)*{
\bt[scale=0.5]
	\draw (0,0) to (2,0);
	\draw (0,-1) to (2,-1);
	\draw (0,-2) to (3,-2);
	\draw (0,-3) to (3,-3);
	\node at (0.5,-0.5) { $3$};
	\node at (1.5,-0.5) { $6'$};
	\node at (0.5,-1.5) { $2$};
	\node at (0.5,-2.5) { $1$};
	\node at (1.5,-2.5) { $4'$};
	\node at (2.5,-2.5) { $5$};
\et
};
\endxy \ .
\]

We conclude this section by highlighting two key examples of permutation supermodules, namely $M^{\omega}$ and $M^{(r)}$. The former is isomorphic to $\H$ itself. Indeed, there is an $\H$-isomorphism $\H\xrightarrow{\sim}M^{\omega}$ given by mapping a standard basis element $x=c_1^{a_1}\cdots c_r^{a_r}\sigma$ to $x.T^0$ where $T^0\in\T(\omega)$ is the supertabloid with entries $1,2,\dots,r$ from top to bottom. Meanwhile $M^{(r)}$, sometimes referred to as the \emph{basic spin supermodule}, is isomorphic to $C_r$ when the latter is thought of as an $\H$-supermodule via
\[
c_i.c_{i_1}c_{i_2}\cdots=c_ic_{i_1}c_{i_2}\cdots,\quad\quad\sigma.c_{i_1}c_{i_2}\cdots=c_{\sigma(i_1)}c_{\sigma(i_2)}\cdots
\]
for $1\leq i,i_1,i_2,\dots\leq r$ and $\sigma\in\frakS_r$. Indeed, an $\H$-isomorphism $C_r\xrightarrow{\sim}M^{(r)}$ is given by sending $y=c_1^{a_1}\cdots c_r^{a_r}$ to $y.T^1$ where $T^1\in\T((r))$ is the supertabloid with entries $1,2,\dots,r$. It is straightforward to see that both maps are isomorphisms of superspaces which preserve the actions of $\H$. 


\section{Web superalgebra}\label{web-sec}


In this section, we define a superalgebra $\W=\W(r)$ whose elements are linear combinations of webs of type Q modulo certain relations. It will be shown later that $\W$ is isomorphic to $\Sdot'_c(r)$ (Theorem \ref{psi-map}). 


\subsection{Definition of $\W$}


Recall from Section 1 that multiplication of webs is given by vertical concatenation, i.e. for webs $w_1,w_2$ we have
\[
w_1w_2 \ := \ 
\xy
(0,0)*{
\begin{tikzpicture}[scale=.35, color=\clr]
	\draw [ thick, ] (0,-5) to (0,-4);
	\draw [ thick, ] (2,-5) to (2,-4);
	\draw [ thick,  directed=1] (0,-2) to (0,-1);
	\draw [ thick,  directed=1] (2,-2) to (2,-1);
	\draw [ thick] (-0.3,-4) rectangle (2.3,-2);
	\node at (1,-3) { $w_1$};
	\draw [ thick, ] (0,-8) to (0,-7);
	\draw [ thick, ] (2,-8) to (2,-7);
	\draw [ thick] (-0.3,-7) rectangle (2.3,-5);
	\node at (1,-1.65) { \,$\cdots$};
	\node at (1,-4.6) { \,$\cdots$};
	\node at (1,-7.6) { \,$\cdots$};
	\node at (1,-6) { $w_2$};
\end{tikzpicture}
};
\endxy \ .
\]
We set $w_1w_2=0$ if the edge labels along the bottom of $w_1$ do not match those along the top of $w_2$, and refer to this statement as the \emph{compatibility condition}. Each web will be homogeneous, and we have the \emph{superinterchange law}
\[
\xy
(0,0)*{
\begin{tikzpicture}[scale=.35, color=\clr]
	\draw [ thick, ] (3.2,-8) to (3.2,-4);
	\draw [ thick, ] (5.2,-8) to (5.2,-4);
	\draw [ thick,  directed=1] (3.2,-2) to (3.2,-1);
	\draw [ thick,  directed=1] (5.2,-2) to (5.2,-1);
	\draw [ thick] (2.9,-4) rectangle (5.5,-2);
	\node at (4.2,-3) { $w_2$};
	\draw [ thick, ] (0,-8) to (0,-7);
	\draw [ thick, ] (2,-8) to (2,-7);
	\draw [ thick] (-0.3,-7) rectangle (2.3,-5);
	\draw [ thick, directed=1] (0,-5) to (0,-1);
	\draw [ thick, directed=1] (2,-5) to (2,-1);
	\node at (4.2,-6) { \ $\cdots$};
	\node at (1,-7.6) { \ $\cdots$};
	\node at (4.2,-1.65) { \ $\cdots$};
	\node at (1,-3) { \ $\cdots$};
	\node at (1,-6) { $w_1$};
\end{tikzpicture}
};
\endxy \ =(-1)^{|w_1|\cdot|w_2|} \ 
\xy
(0,0)*{
\begin{tikzpicture}[scale=.35, color=\clr]
	\draw [ thick, ] (0,-8) to (0,-4);
	\draw [ thick, ] (2,-8) to (2,-4);
	\draw [ thick,  directed=1] (0,-2) to (0,-1);
	\draw [ thick,  directed=1] (2,-2) to (2,-1);
	\draw [ thick] (-0.3,-4) rectangle (2.3,-2);
	\node at (1,-3) { $w_1$};
	\draw [ thick, ] (3.2,-8) to (3.2,-7);
	\draw [ thick, ] (5.2,-8) to (5.2,-7);
	\draw [ thick] (2.9,-7) rectangle (5.5,-5);
	\draw [ thick, directed=1] (3.2,-5) to (3.2,-1);
	\draw [ thick, directed=1] (5.2,-5) to (5.2,-1);
	\node at (1,-6) { \ $\cdots$};
	\node at (4.2,-7.6) { \ $\cdots$};
	\node at (1,-1.65) { \ $\cdots$};
	\node at (4.2,-3) { \ $\cdots$};
	\node at (4.2,-6) { $w_2$};
\end{tikzpicture}
};
\endxy \ .
\]
We will use the words ``edge" and ``strand" synonymously, and will refer to an edge labeled $k$ as a ``$k$-strand".

Let $\A(r)$ be the set of tuples $\a=(\a_1,\dots,\a_m)$ of \emph{positive} integers with $\a_1+\cdots+\a_m=r$, $m\geq 1.$ Then $\W$ is generated by the even elements 
\[
\xy
(0,0)*{
\begin{tikzpicture}[color=\clr, scale=.35]
	\draw [thick, directed=1] (-4,-1) to (-4,2.5);
	\draw [thick, directed=1] (-1.5,-1) to (-1.5,2.5);
	\node at (-1.5,3) {\fs $\a_m$};
	\node at (-1.5,-1.5) {\fs $\a_m$};
	\node at (-4,3) {\fs $\a_1$};
	\node at (-4,-1.5) {\fs $\a_1$};
	\node at (-2.8,0.5) { \ $\cdots$};
\end{tikzpicture}
};
\endxy,\quad\quad
\xy
(0,0)*{
\begin{tikzpicture}[color=\clr, scale=.35]
	\draw [thick, directed=1] (-3,-1) to (-3,2.5);
	\draw [thick, directed=1] (3,-1) to (3,2.5);
	\draw [thick, directed=1] (-5.5,-1) to (-5.5,2.5);
	\draw [thick, directed=1] (5.5,-1) to (5.5,2.5);
	\draw [ thick, directed=1] (0,1) to (0,2.5);
	\draw [ thick, directed=.65] (1,-1) to [out=90,in=330] (0,1);
	\draw [ thick, directed=.65] (-1,-1) to [out=90,in=210] (0,1);
	\node at (0,3) {\fs $\a_i\!+\!\a_{i+1}$};
	\node at (-1,-1.5) {\fs $\a_i$};
	\node at (1,-1.5) {\fs $\a_{i+1}$};
	\node at (-3,-1.5) {\fs $\a_{i-1}$};
	\node at (-3,3) {\fs $\a_{i-1}$};
	\node at (3,-1.5) {\fs $\a_{i+2}$};
	\node at (3,3) {\fs $\a_{i+2}$};
	\node at (-5.5,-1.5) {\fs $\a_1$};
	\node at (-5.5,3) {\fs $\a_1$};
	\node at (5.5,-1.5) {\fs $\a_m$};
	\node at (5.5,3) {\fs $\a_m$};
	\node at (-4.3,0.5) { \ $\cdots$};
	\node at (4.2,0.5) { \ $\cdots$};
\end{tikzpicture}
};
\endxy,\quad\quad
\xy
(0,0)*{
\begin{tikzpicture}[color=\clr, scale=.35]
	\draw [thick, directed=1] (-3,-1) to (-3,2.5);
	\draw [thick, directed=1] (3,-1) to (3,2.5);
	\draw [thick, directed=1] (-5.5,-1) to (-5.5,2.5);
	\draw [thick, directed=1] (5.5,-1) to (5.5,2.5);
	\draw [ thick, directed=0.65] (0,-1) to (0,0.5);
	\draw [ thick, directed=1] (0,0.5) to [out=30,in=270] (1,2.5);
	\draw [ thick, directed=1] (0,0.5) to [out=150,in=270] (-1,2.5); 
	\node at (0,-1.5) {\fs $\a_i\!+\!\a_{i+1}$};
	\node at (-1,3) {\fs $\a_i$};
	\node at (1,3) {\fs $\a_{i+1}$};
	\node at (-3,-1.5) {\fs $\a_{i-1}$};
	\node at (-3,3) {\fs $\a_{i-1}$};
	\node at (3,-1.5) {\fs $\a_{i+2}$};
	\node at (3,3) {\fs $\a_{i+2}$};
	\node at (-5.5,-1.5) {\fs $\a_1$};
	\node at (-5.5,3) {\fs $\a_1$};
	\node at (5.5,-1.5) {\fs $\a_m$};
	\node at (5.5,3) {\fs $\a_m$};
	\node at (-4.3,0.5) { \ $\cdots$};
	\node at (4.2,0.5) { \ $\cdots$};
\end{tikzpicture}
};
\endxy,
\]
called \emph{identities}, \emph{merges}, and \emph{splits}, respectively, and the odd elements
\[
\xy
(0,0)*{
\begin{tikzpicture}[color=\clr, scale=.35]
	\draw [thick, directed=1] (-4,-1) to (-4,2.5);
	\draw [thick, directed=1] (-1.5,-1) to (-1.5,2.5);
	\draw [thick, directed=1] (0.5,-1) to (0.5,2.5);
	\draw [thick, directed=1] (2.5,-1) to (2.5,2.5);
	\draw [thick, directed=1] (5,-1) to (5,2.5);
	\node at (0.5,3) {\fs $\a_j$};
	\node at (0.5,-1.5) {\fs $\a_j$};
	\node at (-1.5,3) {\fs $\a_{j-1}$};
	\node at (-1.5,-1.5) {\fs $\a_{j-1}$};
	\node at (-4,3) {\fs $\a_1$};
	\node at (-4,-1.5) {\fs $\a_1$};
	\node at (2.5,3) {\fs $\a_{j+1}$};
	\node at (2.5,-1.5) {\fs $\a_{j+1}$};
	\node at (5,3) {\fs $\a_m$};
	\node at (5,-1.5) {\fs $\a_m$};
	\node at (-2.8,0.5) { \ $\cdots$};
	\node at (3.7,0.5) { \ $\cdots$};
	\draw (0.5,0.5) \wdot;
\end{tikzpicture}
};
\endxy,
\]
called \emph{dots}, for $\a\in\A(r)$, $m\geq 1$, $1\leq i\leq m-1$, and $1\leq j\leq m$. We will more often use the word ``merge" to refer only to the strands labeled $\a_i$, $\a_{i+1}$, and $\a_i+\a_{i+1}$ in the picture above, and similarly for ``split" and ``dot". Abusing notation, we will refer to the identity corresponding to $\a\in\A(r)$ simply as $\a$. By the compatibility condition, $(\a)_{\a\in\A(r)}$ is a family of pairwise orthogonal idempotents.

\begin{remark}\label{conventions}
As a matter of convenience, some expressions later on may involve webs containing $k$-strands with $k\leq 0$. For this we adopt the following conventions:
\be
\item we erase any $0$-strands which have no dots,
\item we declare that any web containing a dot which lies on a $0$-strand is zero, and
\item we declare that any web containing a $k$-strand with $k<0$ is zero.
\ee
\end{remark}

As dots are the only generators of odd parity, the parity of a web is the number of dots modulo 2. We also have the following relation, implied by the superinterchange law: any pair of disjoint generators, lying at adjacent heights in a single web, may exchange heights with no penalty except when both are dots, in which case we have
\beq\label{superinterchange}
\xy
(0,0)*{
\begin{tikzpicture}[color=\clr, scale=.35]
	\draw [thick, directed=1] (-4,-1) to (-4,2.5);
	\draw [thick, directed=1] (-1.5,-1) to (-1.5,2.5);
	\node at (-1.5,3.1) {\fs $l$};
	\node at (-1.5,-1.6) {\fs $l$};
	\node at (-4,3.1) {\fs $k$};
	\node at (-4,-1.6) {\fs $k$};
	\node at (-2.8,-0.25) { \ $\cdots$};
	\draw (-4,0.5) \wdot;
	\draw (-1.5,1.25) \wdot;
\end{tikzpicture}
};
\endxy=-
\xy
(0,0)*{
\begin{tikzpicture}[color=\clr, scale=.35]
	\draw [thick, directed=1] (-4,-1) to (-4,2.5);
	\draw [thick, directed=1] (-1.5,-1) to (-1.5,2.5);
	\node at (-1.5,3.1) {\fs $l$};
	\node at (-1.5,-1.6) {\fs $l$};
	\node at (-4,3.1) {\fs $k$};
	\node at (-4,-1.6) {\fs $k$};
	\node at (-2.8,-0.25) { \ $\cdots$};
	\draw (-4,1.25) \wdot;
	\draw (-1.5,0.5) \wdot;
\end{tikzpicture}
};
\endxy
\eeq
for $k,l>0$. It will be convenient to draw horizontally adjacent dots as lying at the same height; we resolve the resulting ambiguity by declaring
\[
\xy
(0,0)*{
\begin{tikzpicture}[color=\clr, scale=.35]
	\draw [thick, directed=1] (-4,-1) to (-4,2.5);
	\draw [thick, directed=1] (-1.5,-1) to (-1.5,2.5);
	\node at (-1.5,3.1) {\fs $l$};
	\node at (-1.5,-1.6) {\fs $l$};
	\node at (-4,3.1) {\fs $k$};
	\node at (-4,-1.6) {\fs $k$};
	\draw (-4,0.5) \wdot;
	\draw (-1.5,0.5) \wdot;
\end{tikzpicture}
};
\endxy:=
\xy
(0,0)*{
\begin{tikzpicture}[color=\clr, scale=.35]
	\draw [thick, directed=1] (-4,-1) to (-4,2.5);
	\draw [thick, directed=1] (-1.5,-1) to (-1.5,2.5);
	\node at (-1.5,3.1) {\fs $l$};
	\node at (-1.5,-1.6) {\fs $l$};
	\node at (-4,3.1) {\fs $k$};
	\node at (-4,-1.6) {\fs $k$};
	\draw (-4,1) \wdot;
	\draw (-1.5,0) \wdot;
\end{tikzpicture}
};
\endxy \ .
\]

To state the other relations of $\W$, we introduce some shorthand. We define a \emph{ladder} (with a single rung) to be a web which, ignoring its dots, is a product of a merge on top of a split, such that the left-hand edge of the split connects to right-hand edge of the merge or vise versa:
\[
\xy
(0,0)*{
\bt[color=\clr, scale=.4]
	\draw [ thick, color=\clr, directed=0.3, directed=1] (-2,-1.5) to (-2,2);
	\draw [ thick, color=\clr, directed=1, directed=0.3] (0,-1.5) to (0,2);
	\draw [ thick, color=\clr, directed=0.65] (0,0.25) to (-2,0.25);
	\node at (-2,-2) {\fs $k$};
	\node at (0,-2) {\fs $l$};
	\node at (-2,2.5) {\fs $k\! +\! j$};
	\node at (0,2.5) {\fs $l\! -\! j$};
	\node at (-1,1) {\fs $j$ \ };
\et
};
\endxy
:= 
\xy
(0,0)*{
\bt[color=\clr, scale=.4]
\reflectbox{\draw [ thick, color=\clr,directed=1] (1.5,.75) to (1.5,2);
	\draw [ thick, color=\clr,directed=0.75] (2.5,-1.5) to [out=90,in=320] (1.5,.75);
	\draw [ thick, color=\clr,directed=.65] (0,-1.5) to (0,-.25);
	\draw [ thick, color=\clr] (0,-0.25) to [out=150,in=270] (-1,1.5);
	\draw [ thick, color=\clr,directed=1] (-1,1.5) to (-1,2);
	\draw [ thick, color=\clr,directed=.55] (0,-.25) to (1.5,.75);}
	\node at (-2.5,-2) {\fs $k$};
	\node at (0,-2) {\fs $l$};
	\node at (-1.5,2.5) {\fs $k\! +\! j$};
	\node at (1,2.5) {\fs $l\! -\! j$};
	\node at (-0.25,1) {\fs $j$ \ };
\et
};
\endxy,\quad\quad
\xy
(0,0)*{
\bt[color=\clr, scale=.4]
	\draw [ thick, color=\clr, directed=0.3, directed=1] (-2,-1.5) to (-2,2);
	\draw [ thick, color=\clr, directed=1, directed=0.3] (0,-1.5) to (0,2);
	\draw [ thick, color=\clr, directed=0.65] (-2,0.25) to (0,0.25);
	\node at (-2,-2) {\fs $k$};
	\node at (0,-2) {\fs $l$};
	\node at (-2,2.5) {\fs $k\! -\! j$};
	\node at (0,2.5) {\fs $l\! +\! j$};
	\node at (-1,1) {\fs $j$ \ };
\et
};
\endxy
:= 
\xy
(0,0)*{
\bt[color=\clr, scale=.4]
	\draw [ thick, color=\clr,directed=1] (1.5,.75) to (1.5,2);
	\draw [ thick, color=\clr,directed=0.75] (2.5,-1.5) to [out=90,in=320] (1.5,.75);
	\draw [ thick, color=\clr,directed=.65] (0,-1.5) to (0,-.25);
	\draw [ thick, color=\clr] (0,-0.25) to [out=150,in=270] (-1,1.5);
	\draw [ thick, color=\clr,directed=1] (-1,1.5) to (-1,2);
	\draw [ thick, color=\clr,directed=.55] (0,-.25) to (1.5,.75);
	\reflectbox{\node at (-2.5,-2) {\reflectbox{\fs $l$}};
	\node at (0,-2) {\reflectbox{\fs $k$}};
	\node at (-1.5,2.5) {\reflectbox{\fs $l\! +\! j$}};
	\node at (1,2.5) {\reflectbox{\fs $k\! - \! j$}};
	\node at (-0.75,1) {\reflectbox{\fs $j$ \ }};}
\et
};
\endxy
\]
for $j,k,l>0$. For convenience we did not draw the appropriate identity strands to left and right of either picture. More generally, we define a ladder to be any product of webs of this form, and refer to any horizontal edge of a ladder as a \emph{rung}. 

In addition to to the compatibility condition and superinterchange law, the generators of $\W$ are subject to relations \eqref{associativity}-\eqref{double-rungs-2} below for $h,k,l>0$, along with the relations obtained by reflecting the webs in \eqref{dots-past-merges} about a vertical axis and reversing the orientations of the rungs in \eqref{double-rungs-1} and \eqref{double-rungs-2}. Each is a \emph{local relation}; that is, each should be interpreted as having the appropriate identity strands to its left and right. For example, \eqref{dot-collision} is really the statement
\[
\xy
(0,0)*{
\begin{tikzpicture}[color=\clr, scale=.3] 
	\draw [ thick, directed=1 ] (-4,-2) to (-4,2);
	\draw [ thick, directed=1 ] (-1,-2) to (-1,2);
	\draw [ thick, directed=1 ] (3,-2) to (3,2);
	\draw [ thick, directed=1 ] (6,-2) to (6,2);
	\draw [ color=\clr, thick, directed=1] (1,-2) to (1,2);
	\node at (-4,2.6) {\fs $\a_1$};
	\node at (-1,2.6) {\fs $\a_{i-1}$};
	\node at (1,2.6) {\fs $\a_i$};
	\node at (3,2.6) {\fs $\a_{i+1}$};
	\node at (6,2.6) {\fs $\a_m$};
	\node at (-4,-2.5) {\fs $\a_1$};
	\node at (-1,-2.5) {\fs $\a_{i-1}$};
	\node at (1,-2.5) {\fs $\a_i$};
	\node at (3,-2.5) {\fs $\a_{i+1}$};
	\node at (6,-2.5) {\fs $\a_m$};
	\draw (1,0.5) \wdot; 
	\draw (1,-0.5) \wdot;
	\node at (-2.5,0) {$\cdots$};
	\node at (4.5,0) {$\cdots$};
\end{tikzpicture}
};
\endxy=(\a_i)
\xy
(0,0)*{
\begin{tikzpicture}[color=\clr, scale=.3] 
	\draw [ thick, directed=1 ] (-4,-2) to (-4,2);
	\draw [ thick, directed=1 ] (-1,-2) to (-1,2);
	\draw [ thick, directed=1 ] (3,-2) to (3,2);
	\draw [ thick, directed=1 ] (6,-2) to (6,2);
	\draw [ color=\clr, thick, directed=1] (1,-2) to (1,2);
	\node at (-4,2.6) {\fs $\a_1$};
	\node at (-1,2.6) {\fs $\a_{i-1}$};
	\node at (1,2.6) {\fs $\a_i$};
	\node at (3,2.6) {\fs $\a_{i+1}$};
	\node at (6,2.6) {\fs $\a_m$};
	\node at (-4,-2.5) {\fs $\a_1$};
	\node at (-1,-2.5) {\fs $\a_{i-1}$};
	\node at (1,-2.5) {\fs $\a_i$};
	\node at (3,-2.5) {\fs $\a_{i+1}$};
	\node at (6,-2.5) {\fs $\a_m$};
	\node at (-2.5,0) {$\cdots$};
	\node at (4.5,0) {$\cdots$};
\end{tikzpicture}
};
\endxy
\]
for $\a\in\A(r)$, $m\geq 1$, and $1\leq i\leq m$, and similarly for the others. The parentheses here and in \eqref{dumbbell-relation}, while not strictly necessary, indicate scalars as opposed to edge labels, and as usual the scalar in \eqref{digon-removal} is the binomial $\binom{k+l}{l}=\frac{(k+l)!}{k!\,l!}$:
\beq\label{associativity}
\xy
(0,0)*{
\begin{tikzpicture}[color=\clr, scale=.3]
	\draw [ color=\clr, thick, directed=.55] (0,.75) to [out=90,in=220] (1,2.5);
	\draw [ color=\clr, thick, directed=.55] (1,-1) to [out=90,in=330] (0,.75);
	\draw [ color=\clr, thick, directed=.55] (-1,-1) to [out=90,in=210] (0,.75);
	\draw [ color=\clr, thick, directed=.55] (3,-1) to [out=90,in=330] (1,2.5);
	\draw [ color=\clr, thick, directed=1] (1,2.5) to (1,3.75);
	\node at (-1,-1.65) {\fs $h$};
	\node at (1,-1.65) {\fs $k$};
	\node at (-1.15,1.75) {\fs $h\! +\! k$ \ \ };
	\node at (3,-1.65) {\fs $l$};
	\node at (1,4.4) {\fs $h\! +\! k\! +\! l$};
\end{tikzpicture}
};
\endxy=
\xy,
(0,0)*{\reflectbox{
\begin{tikzpicture}[color=\clr, scale=.3]
	\draw [ color=\clr, thick, directed=.55] (0,.75) to [out=90,in=220] (1,2.5);
	\draw [ color=\clr, thick, directed=.55] (1,-1) to [out=90,in=330] (0,.75);
	\draw [ color=\clr, thick, directed=.55] (-1,-1) to [out=90,in=210] (0,.75);
	\draw [ color=\clr, thick, directed=.55] (3,-1) to [out=90,in=330] (1,2.5);
	\draw [ color=\clr, thick, directed=1] (1,2.5) to (1,3.75);
	\node at (-1,-1.65) {\fs \reflectbox{$l$}};
	\node at (1,-1.65) {\fs \reflectbox{$k$}};
	\node at (-1,1.75) {\fs \reflectbox{ \ $k\! +\! l$}};
	\node at (3,-1.65) {\fs \reflectbox{$h$}};
	\node at (1,4.4) {\fs \reflectbox{$h\! +\! k\! +\! l$}};
\end{tikzpicture}
}};
\endxy\quad,\quad
\xy
(0,0)*{\rotatebox{180}{
\begin{tikzpicture}[color=\clr, scale=.3]
	\draw [ color=\clr, thick, rdirected=.55] (0,.75) to [out=90,in=220] (1,2.5);
	\draw [ color=\clr, thick, rdirected=.1] (1,-1) to [out=90,in=330] (0,.75);
	\draw [ color=\clr, thick, rdirected=.1] (-1,-1) to [out=90,in=210] (0,.75);
	\draw [ color=\clr, thick, rdirected=.05] (3,-1) to [out=90,in=330] (1,2.5);
	\draw [ color=\clr, thick, rdirected=.55] (1,2.5) to (1,3.75);
	\node at (-1,-1.75) {\rotatebox{180}{\fs $l$}};
	\node at (1,-1.75) {\rotatebox{180}{\fs $k$}};
	\node at (-1,1.75) {\rotatebox{180}{\fs \ $k\! +\! l$}};
	\node at (3,-1.75) {\rotatebox{180}{\fs $h$}};
	\node at (1,4.4) {\rotatebox{180}{\fs $h\! +\! k\! +\! l$}};
\end{tikzpicture}
}};
\endxy=
\xy
(0,0)*{\reflectbox{\rotatebox{180}{
\begin{tikzpicture}[color=\clr, scale=.3]
	\draw [ color=\clr, thick, rdirected=.55] (0,.75) to [out=90,in=220] (1,2.5);
	\draw [ color=\clr, thick, rdirected=.1] (1,-1) to [out=90,in=330] (0,.75);
	\draw [ color=\clr, thick, rdirected=.1] (-1,-1) to [out=90,in=210] (0,.75);
	\draw [ color=\clr, thick, rdirected=.05] (3,-1) to [out=90,in=330] (1,2.5);
	\draw [ color=\clr, thick, rdirected=.55] (1,2.5) to (1,3.75);
	\node at (-1,-1.75) {\reflectbox{\rotatebox{180}{\fs $h$}}};
	\node at (1,-1.75) {\reflectbox{\rotatebox{180}{\fs $k$}}};
	\node at (-1,1.75) {\reflectbox{\rotatebox{180}{\fs $h\! +\! k$ \ }}};
	\node at (3,-1.75) {\reflectbox{\rotatebox{180}{\fs $l$}}};
	\node at (1,4.4) {\reflectbox{\rotatebox{180}{\fs $h\! +\! k\! +\! l$}}};
\end{tikzpicture}
}}};
\endxy,
\eeq
\beq\label{digon-removal}
\xy
(0,0)*{
\begin{tikzpicture}[color=\clr, scale=.3]
	\draw [ color=\clr, thick, directed=1] (0,.75) to (0,2);
	\draw [ color=\clr, thick, directed=.55] (0,-2.75) to [out=30,in=330] (0,.75);
	\draw [ color=\clr, thick, directed=.55] (0,-2.75) to [out=150,in=210] (0,.75);
	\draw [ color=\clr, thick, directed=.65] (0,-4) to (0,-2.75);
	\node at (0,-4.5) {\fs $k\! +\! l$};
	\node at (0,2.6) {\fs $k\! +\! l$};
	\node at (-1.5,-1) {\fs $k$ \ \ };
	\node at (1.5,-1) {\fs  \ \ $l$};
\end{tikzpicture}
};
\endxy=\left(\nfrac{k+l}{l}\right)
\xy
(0,0)*{
\begin{tikzpicture}[color=\clr, scale=.3]
	\draw [ color=\clr, thick, directed=1] (0,-4) to (0,2);
	\node at (0,-4.5) {\fs $k\! +\! l$};
	\node at (0,2.6) {\fs $k\! +\! l$};
\end{tikzpicture}
};
\endxy,
\eeq
\beq\label{dot-collision}
\xy
(0,0)*{
\begin{tikzpicture}[color=\clr, scale=.3] 
	\draw [ color=\clr, thick, directed=1] (1,-2) to (1,2);
	\node at (1,2.6) {\fs $k$};
	\node at (1,-2.5) {\fs $k$};
	\draw (1,0.5) \wdot; 
	\draw (1,-0.5) \wdot;
\end{tikzpicture}
};
\endxy=(k)
\xy
(0,0)*{
\begin{tikzpicture}[color=\clr, scale=.3] 
	\draw [ color=\clr, thick, directed=1] (1,-2) to (1,2);
	\node at (1,2.6) {\fs $k$};
	\node at (1,-2.5) {\fs $k$};
\end{tikzpicture}
};
\endxy,
\eeq
\beq\label{dots-past-merges}
\xy
(0,0)*{
\bt[color=\clr, scale=.35]
	\draw [ color=\clr, thick, directed=1] (0, .75) to (0,2.5);
	\draw [ color=\clr, thick, directed=.6] (1,-1) to [out=90,in=330] (0,.75);
	\draw [ color=\clr, thick, directed=.6] (-1,-1) to [out=90,in=210] (0,.75);
	 \draw (0,1.5) \wdot;
	\node at (0, 3) {\fs $1\! +\! k$};
	\node at (-1,-1.5) {\ss $1$};
	\node at (1,-1.5) {\fs $k$};
\et
};
\endxy=
\xy
(0,0)*{
\bt[color=\clr, scale=.35]
	\draw [ color=\clr, thick, directed=1] (0, .75) to (0,2);
	\draw [ color=\clr, thick, directed=.3] (1,-1) to [out=90,in=330] (0,.75);
	\draw [ color=\clr, thick, directed=.3] (-1,-1) to [out=90,in=210] (0,.75);
	 \draw (-0.75,0) \wdot;
	\node at (0, 2.5) {\fs $1\! +\! k$};
	\node at (-1,-1.5) {\ss $1$};
	\node at (1,-1.5) {\fs $k$};
\et
};
\endxy+
\xy
(0,0)*{
\bt[color=\clr, scale=.35]
	\draw [ color=\clr, thick, directed=1] (0, .75) to (0,2);
	\draw [ color=\clr, thick, directed=.3] (1,-1) to [out=90,in=330] (0,.75);
	\draw [ color=\clr, thick, directed=.3] (-1,-1) to [out=90,in=210] (0,.75);
	 \draw (0.75,0) \wdot;
	\node at (0, 2.5) {\fs $1\! +\! k$};
	\node at (-1,-1.5) {\ss $1$};
	\node at (1,-1.5) {\fs $k$};
\et
};
\endxy,\quad\quad
\xy
(0,0)*{
\bt[color=\clr, scale=.35]
	\draw [ color=\clr, thick, directed=.3] (0,-1) to (0,.75);
	\draw [ color=\clr, thick, directed=1] (0,.75) to [out=30,in=270] (1,2.5);
	\draw [ color=\clr, thick, directed=1] (0,.75) to [out=150,in=270] (-1,2.5);
	 \draw (0,0) \wdot;
	\node at (0, -1.5) {\fs $1\! +\! k$};
	\node at (-1,3) {\ss $1$};
	\node at (1,3) {\fs $k$};
\et
};
\endxy=
\xy
(0,0)*{
\bt[color=\clr, scale=.35]
	\draw [ color=\clr, thick, directed=.65] (0,-0.5) to (0,.75);
	\draw [ color=\clr, thick, directed=1] (0,.75) to [out=30,in=270] (1,2.5);
	\draw [ color=\clr, thick, directed=1] (0,.75) to [out=150,in=270] (-1,2.5);
	 \draw (-0.75,1.5) \wdot;
	\node at (0, -1) {\fs $1\! +\! k$};
	\node at (-1,3) {\ss $1$};
	\node at (1,3) {\fs $k$};
\et
};
\endxy+
\xy
(0,0)*{
\bt[color=\clr, scale=.35]
	\draw [ color=\clr, thick, directed=.65] (0,-0.5) to (0,.75);
	\draw [ color=\clr, thick, directed=1] (0,.75) to [out=30,in=270] (1,2.5);
	\draw [ color=\clr, thick, directed=1] (0,.75) to [out=150,in=270] (-1,2.5);
	 \draw (0.75,1.5) \wdot;
	\node at (0, -1) {\fs $1\! +\! k$};
	\node at (-1,3) {\ss $1$};
	\node at (1,3) {\fs $k$};
\et
};
\endxy,
\eeq
\beq\label{dumbbell-relation}
\xy
(0,0)*{
\begin{tikzpicture}[color=\clr, scale=.3]
	\draw [ color=\clr, thick, directed=.55] (0,-1) to (0,.75);
	\draw [ color=\clr, thick, directed=1] (0,.75) to [out=30,in=270] (1,2.5);
	\draw [ color=\clr, thick, directed=1] (0,.75) to [out=150,in=270] (-1,2.5); 
	\draw [ color=\clr, thick, directed=.65] (1,-2.75) to [out=90,in=330] (0,-1);
	\draw [ color=\clr, thick, directed=.65] (-1,-2.75) to [out=90,in=210] (0,-1);
	\node at (-1,3) {\ss $1$};
	\node at (1,3) {\ss $1$};
	\node at (-1,-3.35) {\ss $1$};
	\node at (1,-3.35) {\ss $1$};
	\node at (0.75,-0.25) {\fs $2$};
\end{tikzpicture}
};
\endxy
-
\xy
(0,0)*{
\begin{tikzpicture}[color=\clr, scale=.3]
	\draw [ color=\clr, thick, directed=.55] (0,-1) to (0,.75);
	\draw [ color=\clr, thick, directed=1] (0,.75) to [out=30,in=270] (1,2.5);
	\draw [ color=\clr, thick, directed=1] (0,.75) to [out=150,in=270] (-1,2.5); 
	\draw [ color=\clr, thick, directed=.75] (1,-2.75) to [out=90,in=330] (0,-1);
	\draw [ color=\clr, thick, directed=.75] (-1,-2.75) to [out=90,in=210] (0,-1);
	\node at (-1,3) {\ss $1$};
	\node at (1,3) {\ss $1$};
	\node at (-1,-3.35) {\ss $1$};
	\node at (1,-3.35) {\ss $1$};
	\node at (0.75,-0.25) {\fs $2$};
	\draw (-0.9,-2.15) \wdot; 
	\draw (0.9,-2.15) \wdot; 
	\draw (0.85,1.65) \wdot; 
	\draw (-0.85,1.65) \wdot;
\end{tikzpicture}
};
\endxy=(2)
\xy
(0,0)*{
\begin{tikzpicture}[color=\clr, scale=.3] 
	\draw [ color=\clr, thick, directed=1] (1,-2.75) to (1,2.5);
	\draw [ color=\clr, thick, directed=1] (-1,-2.75) to (-1,2.5);
	\node at (-1,3) {\ss $1$};
	\node at (1,3) {\ss $1$};
	\node at (-1,-3.35) {\ss $1$};
	\node at (1,-3.35) {\ss $1$};
\end{tikzpicture}
};
\endxy \ ,
\eeq
\beq\label{square-switch}
\xy
(0,0)*{
\bt[color=\clr]
	\draw[ color=\clr, thick, directed=.15, directed=1, directed=.55] (0,0) to (0,1.75);
	\node at (0,-0.2) {\fs $k$};
	\node at (0,2) {\fs $k$};
	\draw[ color=\clr, thick, directed=.15, directed=1, directed=.55] (1,0) to (1,1.75);
	\node at (1,-0.2) {\fs $l$};
	\node at (1,2) {\fs $l$};
	\draw[ color=\clr, thick, directed=.55] (0,0.5) to (1,0.5);
	\node at (0.5,0.25) {\ss$1$};
	\node at (-0.5,0.875) {\fs $k\!-\!1$};
	\draw[ color=\clr, thick, directed=.55] (1,1.25) to (0,1.25);
	\node at (0.5,1.5) {\ss$1$};
	\node at (1.5,0.875) {\fs $l\!+\!1$};
\et
};
\endxy-
\xy
(0,0)*{
\bt[color=\clr]
	\draw[ color=\clr, thick, directed=.15, directed=1, directed=.55] (0,0) to (0,1.75);
	\node at (0,-0.2) {\fs $k$};
	\node at (0,2) {\fs $k$};
	\draw[ color=\clr, thick, directed=.15, directed=1, directed=.55] (1,0) to (1,1.75);
	\node at (1,-0.2) {\fs $l$};
	\node at (1,2) {\fs $l$};
	\draw[ color=\clr, thick, directed=.55] (1,0.5) to (0,0.5);
	\node at (0.5,0.25) {\ss$1$};
	\node at (-0.5,0.875) {\fs $k\!+\!1$};
	\draw[ color=\clr, thick, directed=.55] (0,1.25) to (1,1.25);
	\node at (0.5,1.5) {\ss$1$};
	\node at (1.5,0.875) {\fs $l\!-\!1$};
\et
};
\endxy=(k-l)
\xy
(0,0)*{
\begin{tikzpicture}[color=\clr, scale=.3] 
	\draw [ color=\clr, thick, directed=1] (1,-2.75) to (1,2.5);
	\draw [ color=\clr, thick, directed=1] (-1,-2.75) to (-1,2.5);
	\node at (-1,3.1) {\fs $k$};
	\node at (1,3.1) {\fs $l$};
	\node at (-1,-3.35) {\fs $k$};
	\node at (1,-3.35) {\fs $l$};
\end{tikzpicture}
};
\endxy \ ,
\eeq
\begin{eqnarray}\label{square-switch-dots}
\xy
(0,0)*{
\bt[color=\clr]
	\draw[ color=\clr, thick, directed=.15, directed=1, directed=.55] (0,0) to (0,1.75);
	\node at (0,-0.2) {\fs $k$};
	\node at (0,2) {\fs $k$};
	\draw[ color=\clr, thick, directed=.15, directed=1, directed=.55] (1,0) to (1,1.75);
	\node at (1,-0.2) {\fs $l$};
	\node at (1,2) {\fs $l$};
	\draw[ color=\clr, thick, directed=.55] (0,0.5) to (1,0.5);
	\node at (0.5,0.25) {\ss$1$};
	\node at (-0.5,0.875) {\fs $k\!-\!1$};
	\draw[ color=\clr, thick, directed=.55] (1,1.25) to (0,1.25);
	\node at (0.5,1.5) {\ss$1$};
	\node at (1.5,0.875) {\fs $l\!+\!1$};
	 \draw (0.25,1.25) \wdot;
\et
};
\endxy-
\xy
(0,0)*{
\bt[color=\clr]
	\draw[ color=\clr, thick, directed=.15, directed=1, directed=.55] (0,0) to (0,1.75);
	\node at (0,-0.2) {\fs $k$};
	\node at (0,2) {\fs $k$};
	\draw[ color=\clr, thick, directed=.15, directed=1, directed=.55] (1,0) to (1,1.75);
	\node at (1,-0.2) {\fs $l$};
	\node at (1,2) {\fs $l$};
	\draw[ color=\clr, thick, directed=.55] (1,0.5) to (0,0.5);
	\node at (0.5,0.25) {\ss$1$};
	\node at (-0.5,0.875) {\fs $k\!+\!1$};
	\draw[ color=\clr, thick, directed=.55] (0,1.25) to (1,1.25);
	\node at (0.5,1.5) {\ss$1$};
	\node at (1.5,0.875) {\fs $l\!-\!1$};
	 \draw (0.75,0.5) \wdot; 
\et
};
\endxy &=&
\xy
(0,0)*{
\begin{tikzpicture}[color=\clr, scale=.3] 
	\draw [ color=\clr, thick, directed=1] (1,-2.75) to (1,2.5);
	\draw [ color=\clr, thick, directed=1] (-1,-2.75) to (-1,2.5);
	\node at (-1,3.1) {\fs $k$};
	\node at (1,3.1) {\fs $l$};
	\node at (-1,-3.35) {\fs $k$};
	\node at (1,-3.35) {\fs $l$};
	 \draw (-1,-0.125) \wdot;
\end{tikzpicture}
};
\endxy-
\xy
(0,-1)*{
\begin{tikzpicture}[color=\clr, scale=.3] 
	\draw [ color=\clr, thick, directed=1] (1,-2.75) to (1,2.5);
	\draw [ color=\clr, thick, directed=1] (-1,-2.75) to (-1,2.5);
	\node at (-1,3.1) {\fs $k$};
	\node at (1,3.1) {\fs $l$};
	\node at (-1,-3.35) {\fs $k$};
	\node at (1,-3.35) {\fs $l$};
	 \draw (1,-0.125) \wdot;
	\end{tikzpicture}
};
\endxy\\
&=&
\xy
(0,0)*{
\bt[color=\clr]
	\draw[ color=\clr, thick, directed=.15, directed=1, directed=.55] (0,0) to (0,1.75);
	\node at (0,-0.2) {\fs $k$};
	\node at (0,2) {\fs $k$};
	\draw[ color=\clr, thick, directed=.15, directed=1, directed=.55] (1,0) to (1,1.75);
	\node at (1,-0.2) {\fs $l$};
	\node at (1,2) {\fs $l$};
	\draw[ color=\clr, thick, directed=.55] (0,0.5) to (1,0.5);
	\node at (0.5,0.25) {\ss$1$};
	\node at (-0.5,0.875) {\fs $k\!-\!1$};
	\draw[ color=\clr, thick, directed=.55] (1,1.25) to (0,1.25);
	\node at (0.5,1.5) {\ss$1$};
	\node at (1.5,0.875) {\fs $l\!+\!1$};
	 \draw (0.25,0.5) \wdot; 
\et
};
\endxy-
\xy
(0,0)*{
\bt[color=\clr]
	\draw[ color=\clr, thick, directed=.15, directed=1, directed=.55] (0,0) to (0,1.75);
	\node at (0,-0.2) {\fs $k$};
	\node at (0,2) {\fs $k$};
	\draw[ color=\clr, thick, directed=.15, directed=1, directed=.55] (1,0) to (1,1.75);
	\node at (1,-0.2) {\fs $l$};
	\node at (1,2) {\fs $l$};
	\draw[ color=\clr, thick, directed=.55] (1,0.5) to (0,0.5);
	\node at (0.5,0.25) {\ss$1$};
	\node at (-0.5,0.875) {\fs $k\!+\!1$};
	\draw[ color=\clr, thick, directed=.55] (0,1.25) to (1,1.25);
	\node at (0.5,1.5) {\ss$1$};
	\node at (1.5,0.875) {\fs $l\!-\!1$};
	 \draw (0.75,1.25) \wdot; 
\et
};
\endxy \ ,\nonumber
\end{eqnarray}
\beq\label{double-rungs-1}
\xy
(0,0)*{
\bt[color=\clr]
	\draw[ color=\clr, thick, directed=.15, directed=1] (-1,0) to (-1,1.75);
	\node at (-1,-0.2) {\fs $h$};
	\node at (-1,2) {\fs $h\!+\!1$};
	\draw[ color=\clr, thick, directed=.15, directed=1, directed=.55] (0,0) to (0,1.75);
	\node at (0,-0.2) {\fs $k$};
	\node at (0,2) {\fs $k$};
	\draw[ color=\clr, thick, directed=.15, directed=1] (1,0) to (1,1.75);
	\node at (1,-0.2) {\fs $l$};
	\node at (1,2) {\fs $l\!-\!1$};
	\draw[ color=\clr, thick, directed=.55] (1,0.5) to (0,0.5);
	\node at (0.5,0.25) {\ss$1$};
	\draw[ color=\clr, thick, directed=.55] (0,1.25) to (-1,1.25);
	\node at (-0.5,1.5) {\ss$1$};
\et
};
\endxy-
\xy
(0,0)*{
\bt[color=\clr]
	\draw[ color=\clr, thick, directed=.15, directed=1] (-1,0) to (-1,1.75);
	\node at (-1,-0.2) {\fs $h$};
	\node at (-1,2) {\fs $h\!+\!1$};
	\draw[ color=\clr, thick, directed=.15, directed=1, directed=.55] (0,0) to (0,1.75);
	\node at (0,-0.2) {\fs $k$};
	\node at (0,2) {\fs $k$};
	\draw[ color=\clr, thick, directed=.15, directed=1] (1,0) to (1,1.75);
	\node at (1,-0.2) {\fs $l$};
	\node at (1,2) {\fs $l\!-\!1$};
	\draw[ color=\clr, thick, directed=.55] (0,0.5) to (-1,0.5);
	\node at (-0.5,0.25) {\ss$1$};
	\draw[ color=\clr, thick, directed=.55] (1,1.25) to (0,1.25);
	\node at (0.5,1.5) {\ss$1$};
\et
};
\endxy=
\xy
(0,0)*{
\bt[color=\clr]
	\draw[ color=\clr, thick, directed=.15, directed=1] (-1,0) to (-1,1.75);
	\node at (-1,-0.2) {\fs $h$};
	\node at (-1,2) {\fs $h\!+\!1$};
	\draw[ color=\clr, thick, directed=.15, directed=1, directed=.55] (0,0) to (0,1.75);
	\node at (0,-0.2) {\fs $k$};
	\node at (0,2) {\fs $k$};
	\draw[ color=\clr, thick, directed=.15, directed=1] (1,0) to (1,1.75);
	\node at (1,-0.2) {\fs $l$};
	\node at (1,2) {\fs $l\!-\!1$};
	\draw[ color=\clr, thick, directed=.55] (1,0.5) to (0,0.5);
	\node at (0.5,0.25) {\ss$1$};
	\draw[ color=\clr, thick, directed=.55] (0,1.25) to (-1,1.25);
	\node at (-0.5,1.5) {\ss$1$};
	\draw (0.75,0.5) \wdot;
	\draw (-0.75,1.25) \wdot; 
\et
};
\endxy+
\xy
(0,0)*{
\bt[color=\clr]
	\draw[ color=\clr, thick, directed=.15, directed=1] (-1,0) to (-1,1.75);
	\node at (-1,-0.2) {\fs $h$};
	\node at (-1,2) {\fs $h\!+\!1$};
	\draw[ color=\clr, thick, directed=.15, directed=1, directed=.55] (0,0) to (0,1.75);
	\node at (0,-0.2) {\fs $k$};
	\node at (0,2) {\fs $k$};
	\draw[ color=\clr, thick, directed=.15, directed=1] (1,0) to (1,1.75);
	\node at (1,-0.2) {\fs $l$};
	\node at (1,2) {\fs $l\!-\!1$};
	\draw[ color=\clr, thick, directed=.55] (0,0.5) to (-1,0.5);
	\node at (-0.5,0.25) {\ss$1$};
	\draw[ color=\clr, thick, directed=.55] (1,1.25) to (0,1.25);
	\node at (0.5,1.5) {\ss$1$};
	\draw (0.75,1.25) \wdot;
	\draw (-0.75,0.5) \wdot; 
\et
};
\endxy,
\eeq
\beq\label{double-rungs-2}
\xy
(0,0)*{
\bt[color=\clr]
	\draw[ color=\clr, thick, directed=.15, directed=1] (-1,0) to (-1,1.75);
	\node at (-1,-0.2) {\fs $h$};
	\node at (-1,2) {\fs $h\!+\!1$};
	\draw[ color=\clr, thick, directed=.15, directed=1, directed=.55] (0,0) to (0,1.75);
	\node at (0,-0.2) {\fs $k$};
	\node at (0,2) {\fs $k$};
	\draw[ color=\clr, thick, directed=.15, directed=1] (1,0) to (1,1.75);
	\node at (1,-0.2) {\fs $l$};
	\node at (1,2) {\fs $l\!-\!1$};
	\draw[ color=\clr, thick, directed=.55] (1,0.5) to (0,0.5);
	\node at (0.5,0.25) {\ss$1$};
	\draw[ color=\clr, thick, directed=.55] (0,1.25) to (-1,1.25);
	\node at (-0.5,1.5) {\ss$1$};
	 \draw (0.75,0.5) \wdot; 
\et
};
\endxy-
\xy
(0,0)*{
\bt[color=\clr]
	\draw[ color=\clr, thick, directed=.15, directed=1] (-1,0) to (-1,1.75);
	\node at (-1,-0.2) {\fs $h$};
	\node at (-1,2) {\fs $h\!+\!1$};
	\draw[ color=\clr, thick, directed=.15, directed=1, directed=.55] (0,0) to (0,1.75);
	\node at (0,-0.2) {\fs $k$};
	\node at (0,2) {\fs $k$};
	\draw[ color=\clr, thick, directed=.15, directed=1] (1,0) to (1,1.75);
	\node at (1,-0.2) {\fs $l$};
	\node at (1,2) {\fs $l\!-\!1$};
	\draw[ color=\clr, thick, directed=.55] (0,0.5) to (-1,0.5);
	\node at (-0.5,0.25) {\ss$1$};
	\draw[ color=\clr, thick, directed=.55] (1,1.25) to (0,1.25);
	\node at (0.5,1.5) {\ss$1$};
	 \draw (0.75,1.25) \wdot; 
\et
};
\endxy=
\xy
(0,0)*{
\bt[color=\clr]
	\draw[ color=\clr, thick, directed=.15, directed=1] (-1,0) to (-1,1.75);
	\node at (-1,-0.2) {\fs $h$};
	\node at (-1,2) {\fs $h\!+\!1$};
	\draw[ color=\clr, thick, directed=.15, directed=1, directed=.55] (0,0) to (0,1.75);
	\node at (0,-0.2) {\fs $k$};
	\node at (0,2) {\fs $k$};
	\draw[ color=\clr, thick, directed=.15, directed=1] (1,0) to (1,1.75);
	\node at (1,-0.2) {\fs $l$};
	\node at (1,2) {\fs $l\!-\!1$};
	\draw[ color=\clr, thick, directed=.55] (1,0.5) to (0,0.5);
	\node at (0.5,0.25) {\ss$1$};
	\draw[ color=\clr, thick, directed=.55] (0,1.25) to (-1,1.25);
	\node at (-0.5,1.5) {\ss$1$};
	 \draw (-0.75,1.25) \wdot; 
\et
};
\endxy-
\xy
(0,0)*{
\bt[color=\clr]
	\draw[ color=\clr, thick, directed=.15, directed=1] (-1,0) to (-1,1.75);
	\node at (-1,-0.2) {\fs $h$};
	\node at (-1,2) {\fs $h\!+\!1$};
	\draw[ color=\clr, thick, directed=.15, directed=1, directed=.55] (0,0) to (0,1.75);
	\node at (0,-0.2) {\fs $k$};
	\node at (0,2) {\fs $k$};
	\draw[ color=\clr, thick, directed=.15, directed=1] (1,0) to (1,1.75);
	\node at (1,-0.2) {\fs $l$};
	\node at (1,2) {\fs $l\!-\!1$};
	\draw[ color=\clr, thick, directed=.55] (0,0.5) to (-1,0.5);
	\node at (-0.5,0.25) {\ss$1$};
	\draw[ color=\clr, thick, directed=.55] (1,1.25) to (0,1.25);
	\node at (0.5,1.5) {\ss$1$};
	 \draw (-0.75,0.5) \wdot; 
\et
};
\endxy.
\eeq

We refer to \eqref{associativity} as \emph{associativity} and \eqref{digon-removal} as \emph{explosion}. It is clear from the definition that $\W$ admits the superspace decomposition
\[
\W=\bigoplus_{\a,\b\in\A(r)}\b\W\a
\]
and is locally unital with distinguished idempotents $(\a)_{\a\in\A(r)}$.


\subsection{Additional relations}


We now deduce some local relations in $W$ which will be needed later. The first and perhaps most important is
\beq\label{complete-explosion}
\xy
(0,0)*{
\begin{tikzpicture}[scale=.3, color=\clr]
	\draw [ thick, directed=1] (0,-4) to (0,2);
	\node at (0,-4.5) {\scriptsize $k$};
	\node at (0,2.5) {\scriptsize $k$};
\end{tikzpicture}
};
\endxy=\frac{1}{k!}
\xy
(0,0)*{
\begin{tikzpicture}[scale=.3, color=\clr]
	\draw [ thick, directed=1] (0,.75) to (0,2);
	\draw [ thick, directed=.85] (0,-2.75) to [out=30,in=330] (0,.75);
	\draw [ thick, directed=.85] (0,-2.75) to [out=150,in=210] (0,.75);
	\draw [ thick, directed=.65] (0,-4) to (0,-2.75);
	\node at (0,-4.5) {\scriptsize $k$};
	\node at (0,2.5) {\scriptsize $k$};
	\node at (-1.5,-1) {\scriptsize $1$};
	\node at (1.5,-1) {\scriptsize $1$};
	\node at (0,-1) { \,$\cdots$};
\end{tikzpicture}
};
\endxy
\eeq
for $k>0$, where the dots indicate that the $k$-strand has been completely exploded into $k$ separate $1$-strands by repeatedly applying \eqref{digon-removal}. By associativity, there is no ambiguity in the web on the right and we may draw it as it appears.

Let $\a_\omega:=(1,\dots,1)\in\A(r)$.

\begin{lemma}\label{beta-theta-maps}
For $\b,\c\in\A(r)$ we have parity-preserving linear maps
\[
\beta=\beta_\b^\c\colon\c\W\b\to\a_\omega\W\a_\omega,\quad\theta=\theta_\b^\c\colon\a_\omega\W\a_\omega\to\c\W\b
\]
given by
\[
\beta(w)=
\xy
(0,0)*{
\begin{tikzpicture}[scale=1, color=\clr]
	\draw[ thick] (-1.5,1) rectangle (0.5,.5);
	\draw[ thick, directed=0.55] (-1.25,0) to (-1.25,.5);
	\draw[ thick, directed=0.65] (-1.25,1) to (-1.25,1.5);
	\draw[ thick, directed=0.55] (.25,0) to (.25,.5);
	\draw[ thick, directed=0.65] (.25,1) to (.25,1.5);
	\node at (-0.5,1.3) {$\cdots$};
	\node at (-0.5,.2) {$\cdots$};
	\node at (-0.5,0.75) {$w$};
	\node at (-1.55,1.3) {\fs $\c_1$};
	\node at (-1.55,0.2) {\fs $\b_1$};
	\node at (0.65,1.3) {\fs $ \ \c_{m'}$};
	\node at (0.65,0.2) {\fs $\b_m$};
	\draw [ thick, rdirected=.55] (-1.25,0) to [out=210,in=90] (-1.6,-.5);
	\draw [ thick, rdirected=.55] (-1.25,0) to [out=330,in=90] (-0.9,-.5);
	\draw [ thick, rdirected=.55] (0.25,0) to [out=330,in=90] (0.6,-.5);
	\draw [ thick, rdirected=.55] (0.25,0) to [out=210,in=90] (-0.1,-.5);
	\draw [ thick, directed=1] (-1.25,1.5) to [out=30,in=270] (-0.9,2);
	\draw [ thick, directed=1] (-1.25,1.5) to [out=150,in=270] (-1.6,2);
	\draw [ thick, directed=1] (0.25,1.5) to [out=30,in=270] (0.6,2);
	\draw [ thick, directed=1] (0.25,1.5) to [out=150,in=270] (-0.1,2);
	\node at (-1.2,1.8) {\small $\cdots$};
	\node at (-1.2,-.375) {\small $\cdots$};
	\node at (0.3,1.8) {\small $\cdots$};
	\node at (0.3,-.375) {\small $\cdots$};
	\node at (-0.9,2.2) {\ss $1$};
	\node at (-1.6,2.2) {\ss $1$};
	\node at (-0.9,-.7) {\ss $1$};
	\node at (-1.6,-.7) {\ss $1$};
	\node at (-0.1,2.2) {\ss $1$};
	\node at (0.6,2.2) {\ss $1$};
	\node at (-0.1,-.7) {\ss $1$};
	\node at (0.6,-.7) {\ss $1$};
\end{tikzpicture}
};
\endxy
 \ ,\quad\quad
\theta(u)=
\xy
(0,0)*{\rotatebox{180}{
\bt[scale=.35, color=\clr]
	\draw [ thick, rdirected=0.15] (0,1.5) to (0,2.5);
	\draw [ thick, rdirected=0.55] (0,2.5) to [out=30,in=270] (1,4.25);
	\draw [ thick, rdirected=0.55] (0,2.5) to [out=150,in=270] (-1,4.25); 
	\draw [ thick, rdirected=0.15] (5,1.5) to (5,2.5);
	\draw [ thick, rdirected=0.55] (5,2.5) to [out=30,in=270] (6,4.25);
	\draw [ thick, rdirected=0.55] (5,2.5) to [out=150,in=270] (4,4.25); 
	\node at (0,9) {\rotatebox{180}{\scriptsize $\b_m$}};
	\node at (5,9) {\rotatebox{180}{\scriptsize $\b_1$}};
	\node at (0,0.75) {\rotatebox{180}{\scriptsize $\c_{m'}$}};
	\node at (5,0.75) {\rotatebox{180}{\scriptsize $\c_1$}};
	\node at (3.6, 3.75) {\rotatebox{180}{\scriptsize $1$}};
	\node at (6.4, 3.75) {\rotatebox{180}{\scriptsize $1$}};
	\node at (1.4, 3.75) {\rotatebox{180}{\scriptsize $1$}};
	\node at (-1.4, 3.75) {\rotatebox{180}{\scriptsize $1$}};
	\node at (3.6, 6.25) {\rotatebox{180}{\scriptsize $1$}};
	\node at (6.4, 6.25) {\rotatebox{180}{\scriptsize $1$}};
	\node at (1.4, 6.25) {\rotatebox{180}{\scriptsize $1$}};
	\node at (-1.4,6.25) {\rotatebox{180}{\scriptsize $1$}};
	\draw [ thick, rdirected=0.65] (0, 7.5) to (0,8.5);
	\draw [ thick, rdirected=0.55] (1,5.75) to [out=90,in=330] (0,7.5);
	\draw [ thick, rdirected=0.55] (-1,5.75) to [out=90,in=210] (0,7.5);
	\draw [ thick, rdirected=0.65] (5,7.5) to (5,8.5);
	\draw [ thick, rdirected=0.55] (6,5.75) to [out=90,in=330] (5,7.5);
	\draw [ thick, rdirected=0.55] (4,5.75) to [out=90,in=210] (5,7.5);
	\draw [ thick ] (-1.5,5.75) rectangle (6.5,4.25);
	\node at (0.1, 6.25) { $\cdots$};
	\node at (5.1, 6.25) { $\cdots$};
	\node at (0.1, 3.75) { $\cdots$};
	\node at (5.1, 3.75) { $\cdots$};
	\node at (2.6, 7) { $\cdots$};
	\node at (2.6, 2.75) { $\cdots$};
	\node at (2.6, 5) {\rotatebox{180}{ $u$}};
\et
}};
\endxy
\]
for $w\in\c\W\b$ and $u\in\a_\omega\W\a_\omega$. Moreover, $\beta$ is injective and $\theta$ is surjective.
\end{lemma}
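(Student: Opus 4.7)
The plan is to verify well-definedness and parity directly from the pictures, then to exhibit the composition $\theta \circ \beta$ as a nonzero scalar multiple of the identity on $\c\W\b$, from which injectivity of $\beta$ and surjectivity of $\theta$ both follow.

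The maps $\beta$ and $\theta$ are clearly linear and well-defined: in each picture, the compatibility condition holds because the merges and splits at the top and bottom of the central web correctly transition between $\a_\omega$ and the boundaries $\b$ and $\c$. They are parity-preserving because merges and splits are even generators and contribute no dots, so $|\beta(w)| = |w|$ and $|\theta(u)| = |u|$.

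The heart of the argument is the computation of $\theta \circ \beta$. Stacking $\theta$ above $\beta(w)$ produces, at the bottom, for each index $i$, a $\b_i$-strand that is first exploded into $\b_i$ many $1$-strands via iterated splits (from the bottom of $\theta$) and then immediately reassembled via iterated merges (from the bottom of $\beta$); the analogous configuration appears at the top on each $\c_j$-strand. Applying the complete-explosion relation \eqref{complete-explosion} locally to each such stacked pair converts it to a scalar multiple of the identity strand, yielding
\[
\theta(\beta(w)) \;=\; \Bigl(\prod_{i=1}^{m}\b_i!\Bigr)\Bigl(\prod_{j=1}^{m'}\c_j!\Bigr)\,w.
\]

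Since this scalar is a nonzero element of $\C$, the operator $\theta \circ \beta$ is invertible on $\c\W\b$. This immediately delivers injectivity of $\beta$ and surjectivity of $\theta$: if $\beta(w) = 0$ then $\theta(\beta(w))=0$ forces $w=0$, and any $w \in \c\W\b$ equals $\theta$ applied to $\tfrac{1}{\prod_i \b_i!\,\prod_j \c_j!}\,\beta(w) \in \a_\omega\W\a_\omega$. I do not anticipate a genuine obstacle; the entire argument reduces to invoking \eqref{complete-explosion} locally at each outer strand, together with the observation that associativity \eqref{associativity} makes the iterated complete split/merge unambiguous.
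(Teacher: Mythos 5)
Your proposal is correct and follows essentially the same route as the paper: the paper's proof is exactly the observation that, by \eqref{complete-explosion}, $\theta\circ\beta$ is a nonzero scalar multiple of the identity on $\c\W\b$, whence $\beta$ is injective and $\theta$ is surjective. Your version merely adds the explicit scalar $\bigl(\prod_i\b_i!\bigr)\bigl(\prod_j\c_j!\bigr)$ and the routine checks of well-definedness and parity, all of which are accurate.
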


\begin{proof}
By \eqref{complete-explosion}, $\theta\circ\beta$ is the identity on $\c\W\b$ up to a nonzero scalar, so $\beta$ is injective and $\theta$ is surjective. 
\end{proof}

\begin{lemma}\label{additional-relations}
The following local relations hold in $\W$ for $k\geq1$:
\beq\label{2-dots-zero}
\xy
(0,0)*{
\begin{tikzpicture}[color=\clr, scale=.3]
	\draw [ color=\clr, thick, directed=1] (0,.75) to (0,2);
	\draw [ color=\clr, thick, directed=.85] (0,-2.75) to [out=30,in=330] (0,.75);
	\draw [ color=\clr, thick, directed=.85] (0,-2.75) to [out=150,in=210] (0,.75);
	\draw [ color=\clr, thick, directed=.65] (0,-4) to (0,-2.75);
	\node at (0,-4.5) {\scriptsize $2$};
	\node at (0,2.5) {\scriptsize $2$};
	\node at (-1.5,-1) {\scriptsize $1$};
	\node at (1.5,-1) {\scriptsize $1$};
	\draw (-0.85,-1) \wdot;
	\draw (0.85,-1) \wdot;
\end{tikzpicture}
};
\endxy=0,
\eeq
\beq\label{dot-on-k-strand}
\xy
(0,0)*{
\begin{tikzpicture}[color=\clr, scale=.3]
	\draw [ color=\clr, thick, directed=1] (0,.75) to (0,2);
	\draw [ color=\clr, thick, directed=.85] (0,-2.75) to [out=30,in=330] (0,.75);
	\draw [ color=\clr, thick, directed=.85] (0,-2.75) to [out=150,in=210] (0,.75);
	\draw [ color=\clr, thick, directed=.65] (0,-4) to (0,-2.75);
	\node at (0,-4.5) {\scriptsize $k$};
	\node at (0,2.5) {\scriptsize $k$};
	\node at (-1.5,-1) {\scriptsize $1$};
	\node at (2,-1) {\scriptsize $k\!-\!1$};
	\draw (-0.875,-1) \wdot;
\end{tikzpicture}
};
\endxy=
\xy
(0,0)*{
\begin{tikzpicture}[color=\clr, scale=.3]
	\draw [ color=\clr, thick, directed=1] (0,-4.) to (0,2);
	\node at (0,-4.5) {\scriptsize $k$};
	\node at (0,2.5) {\scriptsize $k$};
	\draw (0,-1) \wdot;
\end{tikzpicture}
};
\endxy=
\xy
(0,0)*{
\begin{tikzpicture}[color=\clr, scale=.3]
	\draw [ color=\clr, thick, directed=1] (0,.75) to (0,2);
	\draw [ color=\clr, thick, directed=.85] (0,-2.75) to [out=30,in=330] (0,.75);
	\draw [ color=\clr, thick, directed=.85] (0,-2.75) to [out=150,in=210] (0,.75);
	\draw [ color=\clr, thick, directed=.65] (0,-4) to (0,-2.75);
	\node at (0,-4.5) {\scriptsize $k$};
	\node at (0,2.5) {\scriptsize $k$};
	\node at (-2,-1) {\scriptsize $k\!-\!1$};
	\node at (1.5,-1) {\scriptsize $1$};
	\draw (0.875,-1) \wdot;
\end{tikzpicture}
};
\endxy.
\eeq
\end{lemma}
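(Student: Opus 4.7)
The plan is to prove \eqref{2-dots-zero} first using a specialization of \eqref{square-switch-dots}, then deduce \eqref{dot-on-k-strand} by induction on $k$ with the $k=2$ base case coming from the work done for \eqref{2-dots-zero}.

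For \eqref{2-dots-zero}, let $\tau$ denote a dot on a $2$-strand and $D_{ab}$ the $(1,1)$-digon with $a$ dots on the left $1$-strand and $b$ on the right. Placing a $\tau$ atop $D_{00}$, pushing it through the merge via \eqref{dots-past-merges}, and using $D_{00}=2\,\mathrm{id}_2$ from \eqref{digon-removal}, gives $D_{10}+D_{01}=2\tau$. Next, I specialize the first (top) equation of \eqref{square-switch-dots} to $k=l=1$: by the $0$-strand erasure convention from Remark \ref{conventions}, both square-switch webs degenerate to the dumbbell, with the indicated dots becoming a dot on the left output $1$-strand just above the split (call this $H^{UL}$) and a dot on the right input $1$-strand just below the merge (call this $H^{LR}$), and the right-hand side becoming $\tau_L^{\mathrm{id}}-\tau_R^{\mathrm{id}}$ on $\mathrm{id}_{(1,1)}$. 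Composing the resulting relation $H^{UL}-H^{LR}=\tau_L^{\mathrm{id}}-\tau_R^{\mathrm{id}}$ with a merge $M$ on top and a split $S$ on bottom, and collapsing the arising internal no-dot digon via \eqref{digon-removal}, I obtain $2D_{10}-2D_{01}=D_{10}-D_{01}$, so $D_{10}=D_{01}$. Combined with the first identity, $D_{10}=D_{01}=\tau$. Finally, pushing the top $2$-strand dot of $D_{01}$ through the merge via \eqref{dots-past-merges} and collapsing the resulting two dots on the right $1$-strand via \eqref{dot-collision} yields $\tau\cdot D_{01}=D_{11}+2\,\mathrm{id}_2$; since $D_{01}=\tau$ and $\tau^2=2\,\mathrm{id}_2$, I conclude $D_{11}=0$.

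For \eqref{dot-on-k-strand}, I induct on $k$. The case $k=1$ is trivial and the case $k=2$ is exactly the identity $D_{10}=\tau=D_{01}$ established above. Assume $k\geq 3$. Let $A,B,C$ denote the three webs in the statement, and let $A'$ (resp.\ $C''$) be the digon $(1,k-1)$ (resp.\ $(k-1,1)$) with dot on the $(k-1)$-strand. Pushing $\tau_k$ through the top merge via \eqref{dots-past-merges} and using the no-dot digon identity $D_{00}^{(1,k-1)}=k\,\mathrm{id}_k$ from \eqref{digon-removal} gives $A+A'=kB$; the reflected version yields $C+C''=kB$. Next, I claim $A'=(k-1)C$: by the inductive hypothesis, the dot $\tau_{k-1}$ on the $(k-1)$-strand of $A'$ equals $C_{k-1}$, the $(k-2,1)$-digon with dot on the right $1$-strand. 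Substituting this into $A'$ produces a split pattern $k\to(1,k-2,1)$ with dot on the rightmost $1$-strand, which by \eqref{associativity} I re-associate as $k\to(k-1,1)$ followed by the inner split $(k-1)\to(1,k-2)$ on the left $(k-1)$-strand (with symmetric re-association of the merges); the inner no-dot digon on the $(k-1)$-strand then collapses via \eqref{digon-removal} to $(k-1)\,\mathrm{id}_{k-1}$, leaving exactly $(k-1)C$. A reflected argument gives $C''=(k-1)A$.

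Combining $A+(k-1)C=kB$ with $C+(k-1)A=kB$ and subtracting yields $(2-k)(A-C)=0$; since $k\geq 3$, this forces $A=C$, and substituting back gives $kA=kB$, so $A=B=C$. The main obstacle is the careful degenerate interpretation of \eqref{square-switch-dots} at $k=l=1$: one must identify the two square-switch webs with dumbbells using the $0$-strand convention and correctly locate the dots on the resulting $1$-strands before composing with $M$ and $S$ to extract the decisive identity $D_{10}=D_{01}$.
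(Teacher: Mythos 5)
Your proof is correct, but it takes a genuinely different route from the paper's in both halves. For \eqref{2-dots-zero}, the paper's engine is the dumbbell relation \eqref{dumbbell-relation}: it stacks a second digon on top, applies \eqref{dumbbell-relation} to the interior merge--split pair, and then uses superinterchange, \eqref{dot-collision}, and \eqref{digon-removal} to arrive at a cancellation of the form $D_{11}=-D_{11}+D_{11}=0$. You instead extract the key identity $D_{10}=D_{01}$ from the degenerate $k=l=1$ specialization of \eqref{square-switch-dots} (a specialization the paper never makes, but one that is legitimate since the relation is imposed for all $k,l>0$ and your reading of the $0$-strands follows Remark \ref{conventions}), and only then kill $D_{11}$ by computing $\tau\circ D_{01}$ two ways. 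Notably this reverses the paper's logical order: the paper proves \eqref{2-dots-zero} first and uses it to deduce $D_{10}=D_{01}$ inside the $k=2$ case of \eqref{dot-on-k-strand}, whereas you prove $D_{10}=D_{01}$ first and use it to deduce \eqref{2-dots-zero}. For general $k$ in \eqref{dot-on-k-strand}, the paper completely explodes the $k$-strand via \eqref{complete-explosion}, pushes the dot onto the $k$ parallel $1$-strands, and observes via \eqref{associativity} and the $k=2$ case that all $k$ summands coincide; you instead run an induction producing the linear system $A+(k-1)C=kB$ and $(k-1)A+C=kB$, solved by the nonvanishing of $2-k$ for $k\geq 3$. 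Both routes are sound: the paper's keeps the general-$k$ step to a single computation with no case split, while yours avoids the full explosion and the bookkeeping of $k$ equal summands, at the cost of an induction and the careful degenerate reading of \eqref{square-switch-dots}.
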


\begin{proof}
The proof of \cref{2-dots-zero} is by direct calculation (cf. \cite[Lemma 2.9]{TVW}):
\bea
\xy
(0,1)*{
\begin{tikzpicture}[color=\clr, scale=.3]
	\draw [ color=\clr, thick, directed=1] (0,.75) to (0,7.5);
	\draw [ color=\clr, thick, directed=.85] (0,-2.75) to [out=30,in=330] (0,.75);
	\draw [ color=\clr, thick, directed=.85] (0,-2.75) to [out=150,in=210] (0,.75);
	\draw [ color=\clr, thick, directed=.65] (0,-4) to (0,-2.75);
	\node at (0,-4.5) {\scriptsize $2$};
	\node at (0,8) {\scriptsize $2$};
	\node at (-1.5,-1) {\scriptsize $1$};
	\node at (1.5,-1) {\scriptsize $1$};
	\draw (-0.85,-1) \wdot;
	\draw (0.85,-1) \wdot;
\end{tikzpicture}
};
\endxy
 & \stackrel{\eqref{digon-removal}}{=} & 
\frac{1}{2}
\xy
(0,0)*{
\begin{tikzpicture}[color=\clr, scale=.3]
	\draw [ color=\clr, thick, directed=.65] (0,.75) to (0,2.5);
	\draw [ color=\clr, thick, directed=.85] (0,-2.75) to [out=30,in=330] (0,.75);
	\draw [ color=\clr, thick, directed=.85] (0,-2.75) to [out=150,in=210] (0,.75);
	\draw [ color=\clr, thick, directed=.65] (0,-4) to (0,-2.75);
	\draw [ color=\clr, thick, directed=1] (0,6) to (0,7.5);
	\draw [ color=\clr, thick, directed=.55] (0,2.5) to [out=30,in=330] (0,6);
	\draw [ color=\clr, thick, directed=.55] (0,2.5) to [out=150,in=210] (0,6);
	\node at (0,-4.5) {\scriptsize $2$};
	\node at (0,8) {\scriptsize $2$};
	\node at (-1.5,-1) {\scriptsize $1$};
	\node at (1.5,-1) {\scriptsize $1$};
	\node at (-1.75,4.25) {\scriptsize $1$};
	\node at (1.75,4.25) {\scriptsize $1$};
	\node at (0.75,1.75) {\scriptsize $2$};
	\draw (-0.85,-1) \wdot;
	\draw (0.85,-1) \wdot;
\end{tikzpicture}
};
\endxy
 \ \stackrel{\eqref{dumbbell-relation}}{=} \ 
\frac{1}{2}
\xy
(0,0)*{
\begin{tikzpicture}[color=\clr, scale=.3]
	\draw [ color=\clr, thick, directed=.65] (0,.75) to (0,2.5);
	\draw [ color=\clr, thick, directed=.85] (0,-2.75) to [out=30,in=330] (0,.75);
	\draw [ color=\clr, thick, directed=.85] (0,-2.75) to [out=150,in=210] (0,.75);
	\draw [ color=\clr, thick, directed=.65] (0,-4) to (0,-2.75);
	\draw [ color=\clr, thick, directed=1] (0,6) to (0,7.5);
	\draw [ color=\clr, thick, directed=.85] (0,2.5) to [out=30,in=330] (0,6);
	\draw [ color=\clr, thick, directed=.85] (0,2.5) to [out=150,in=210] (0,6);
	\node at (0,-4.5) {\scriptsize $2$};
	\node at (0,8) {\scriptsize $2$};
	\node at (-1.5,-1) {\scriptsize $1$};
	\node at (1.5,-1) {\scriptsize $1$};
	\node at (-1.5,4.25) {\scriptsize $1$};
	\node at (1.5,4.25) {\scriptsize $1$};
	\node at (0.75,1.75) {\scriptsize $2$};
	\draw (-0.85,-1) \wdot;
	\draw (0.85,-1) \wdot;
	\draw (-0.7,-2) \wdot;
	\draw (0.7,-2) \wdot;
	\draw (-0.85,4.25) \wdot;
	\draw (0.85,4.25) \wdot;
\end{tikzpicture}
};
\endxy
+
\xy
(0,0)*{
\begin{tikzpicture}[color=\clr, scale=.3]
	\draw [ color=\clr, thick, directed=.55] (.9,-1) to (.9,4.25);
	\draw [ color=\clr, thick, directed=.55] (-.9,-1) to (-.9,4.25);
	\draw [ thick] (0,-2.75) to [out=30,in=270] (.9,-1);
	\draw [ thick] (0,-2.75) to [out=150,in=270] (-.9,-1);
	\draw [ color=\clr, thick, directed=.65] (0,-4) to (0,-2.75);
	\draw [ color=\clr, thick, directed=1] (0,6) to (0,7.5);
	\draw [ thick] (.9,4.25) to [out=90,in=330] (0,6);
	\draw [ thick] (-.9,4.25) to [out=90,in=210] (0,6);
	\node at (0,-4.5) {\scriptsize $2$};
	\node at (0,8) {\scriptsize $2$};
	\node at (-1.75,1.6) {\scriptsize $1$};
	\node at (1.75,1.6) {\scriptsize $1$};
	\draw (-0.85,-1) \wdot;
	\draw (0.85,-1) \wdot;
\end{tikzpicture}
};
\endxy
\stackrel{\eqref{superinterchange}}{=}
-\frac{1}{2}
\xy
(0,0)*{
\begin{tikzpicture}[color=\clr, scale=.3]
	\draw [ color=\clr, thick, directed=.65] (0,.75) to (0,2.5);
	\draw [ color=\clr, thick, directed=.85] (0,-2.75) to [out=30,in=330] (0,.75);
	\draw [ color=\clr, thick, directed=.85] (0,-2.75) to [out=150,in=210] (0,.75);
	\draw [ color=\clr, thick, directed=.65] (0,-4) to (0,-2.75);
	\draw [ color=\clr, thick, directed=1] (0,6) to (0,7.5);
	\draw [ color=\clr, thick, directed=.85] (0,2.5) to [out=30,in=330] (0,6);
	\draw [ color=\clr, thick, directed=.85] (0,2.5) to [out=150,in=210] (0,6);
	\node at (0,-4.5) {\scriptsize $2$};
	\node at (0,8) {\scriptsize $2$};
	\node at (-1.5,-1) {\scriptsize $1$};
	\node at (1.5,-1) {\scriptsize $1$};
	\node at (-1.5,4.25) {\scriptsize $1$};
	\node at (1.5,4.25) {\scriptsize $1$};
	\node at (0.75,1.75) {\scriptsize $2$};
	\draw (-0.85,-0.5) \wdot;
	\draw (0.85,-1.25) \wdot;
	\draw (-0.85,-1.25) \wdot;
	\draw (0.7,-2) \wdot;
	\draw (-0.85,4.25) \wdot;
	\draw (0.85,4.25) \wdot;
\end{tikzpicture}
};
\endxy
+
\xy
(0,0)*{
\begin{tikzpicture}[color=\clr, scale=.3]
	\draw [ color=\clr, thick, directed=1] (0,.75) to (0,2);
	\draw [ color=\clr, thick, directed=.85] (0,-2.75) to [out=30,in=330] (0,.75);
	\draw [ color=\clr, thick, directed=.85] (0,-2.75) to [out=150,in=210] (0,.75);
	\draw [ color=\clr, thick, directed=.65] (0,-4) to (0,-2.75);
	\node at (0,-4.5) {\scriptsize $2$};
	\node at (0,2.5) {\scriptsize $2$};
	\node at (-1.5,-1) {\scriptsize $1$};
	\node at (1.5,-1) {\scriptsize $1$};
	\draw (-0.85,-1) \wdot;
	\draw (0.85,-1) \wdot;
\end{tikzpicture}
};
\endxy\\
 & \stackrel{\eqref{dot-collision}}{=} &
-\frac{1}{2}
\xy
(0,0)*{
\begin{tikzpicture}[color=\clr, scale=.3]
	\draw [ color=\clr, thick, directed=.65] (0,.75) to (0,2.5);
	\draw [ color=\clr, thick, directed=.55] (0,-2.75) to [out=30,in=330] (0,.75);
	\draw [ color=\clr, thick, directed=.55] (0,-2.75) to [out=150,in=210] (0,.75);
	\draw [ color=\clr, thick, directed=.65] (0,-4) to (0,-2.75);
	\draw [ color=\clr, thick, directed=1] (0,6) to (0,7.5);
	\draw [ color=\clr, thick, directed=.85] (0,2.5) to [out=30,in=330] (0,6);
	\draw [ color=\clr, thick, directed=.85] (0,2.5) to [out=150,in=210] (0,6);
	\node at (0,-4.5) {\scriptsize $2$};
	\node at (0,8) {\scriptsize $2$};
	\node at (-1.75,-1) {\scriptsize $1$};
	\node at (1.75,-1) {\scriptsize $1$};
	\node at (-1.5,4.25) {\scriptsize $1$};
	\node at (1.5,4.25) {\scriptsize $1$};
	\node at (0.75,1.75) {\scriptsize $2$};
	\draw (-0.85,4.25) \wdot;
	\draw (0.85,4.25) \wdot;
\end{tikzpicture}
};
\endxy
+
\xy
(0,0)*{
\begin{tikzpicture}[color=\clr, scale=.3]
	\draw [ color=\clr, thick, directed=1] (0,.75) to (0,2);
	\draw [ color=\clr, thick, directed=.85] (0,-2.75) to [out=30,in=330] (0,.75);
	\draw [ color=\clr, thick, directed=.85] (0,-2.75) to [out=150,in=210] (0,.75);
	\draw [ color=\clr, thick, directed=.65] (0,-4) to (0,-2.75);
	\node at (0,-4.5) {\scriptsize $2$};
	\node at (0,2.5) {\scriptsize $2$};
	\node at (-1.5,-1) {\scriptsize $1$};
	\node at (1.5,-1) {\scriptsize $1$};
	\draw (-0.85,-1) \wdot;
	\draw (0.85,-1) \wdot;
\end{tikzpicture}
};
\endxy
\stackrel{\eqref{digon-removal}}{=}
-
\xy
(0,0)*{
\begin{tikzpicture}[color=\clr, scale=.3]
	\draw [ color=\clr, thick, directed=1] (0,.75) to (0,2);
	\draw [ color=\clr, thick, directed=.85] (0,-2.75) to [out=30,in=330] (0,.75);
	\draw [ color=\clr, thick, directed=.85] (0,-2.75) to [out=150,in=210] (0,.75);
	\draw [ color=\clr, thick, directed=.65] (0,-4) to (0,-2.75);
	\node at (0,-4.5) {\scriptsize $2$};
	\node at (0,2.5) {\scriptsize $2$};
	\node at (-1.5,-1) {\scriptsize $1$};
	\node at (1.5,-1) {\scriptsize $1$};
	\draw (-0.85,-1) \wdot;
	\draw (0.85,-1) \wdot;
\end{tikzpicture}
};
\endxy+
\xy
(0,0)*{
\begin{tikzpicture}[color=\clr, scale=.3]
	\draw [ color=\clr, thick, directed=1] (0,.75) to (0,2);
	\draw [ color=\clr, thick, directed=.85] (0,-2.75) to [out=30,in=330] (0,.75);
	\draw [ color=\clr, thick, directed=.85] (0,-2.75) to [out=150,in=210] (0,.75);
	\draw [ color=\clr, thick, directed=.65] (0,-4) to (0,-2.75);
	\node at (0,-4.5) {\scriptsize $2$};
	\node at (0,2.5) {\scriptsize $2$};
	\node at (-1.5,-1) {\scriptsize $1$};
	\node at (1.5,-1) {\scriptsize $1$};
	\draw (-0.85,-1) \wdot;
	\draw (0.85,-1) \wdot;
\end{tikzpicture}
};
\endxy \ = \ 0.
\eea
For \cref{dot-on-k-strand}, we first prove the case of $k=2$. We start by computing that

\beq\label{dot-on-2-strand}
\xy
(0,0)*{
\begin{tikzpicture}[color=\clr, scale=.3]
	\draw [ color=\clr, thick, directed=1] (0,-4) to (0,2);
	\node at (0,-4.5) {\scriptsize $2$};
	\node at (0,2.5) {\scriptsize $2$};
	\draw (0,1) \wdot;
\end{tikzpicture}
};
\endxy
 \ \stackrel{\eqref{digon-removal}}{=} \ 
\frac{1}{2}
\xy
(0,0)*{
\begin{tikzpicture}[color=\clr, scale=.3]
	\draw [ color=\clr, thick, directed=1] (0,.75) to (0,2.5);
	\draw [ color=\clr, thick, directed=.55] (0,-2.75) to [out=30,in=330] (0,.75);
	\draw [ color=\clr, thick, directed=.55] (0,-2.75) to [out=150,in=210] (0,.75);
	\draw [ color=\clr, thick, directed=.65] (0,-4) to (0,-2.75);
	\node at (0,-4.5) {\scriptsize $2$};
	\node at (0,3) {\scriptsize $2$};
	\node at (-1.75,-1) {\scriptsize $1$};
	\node at (1.75,-1) {\scriptsize $1$};
	\draw (0,1.5) \wdot;
\end{tikzpicture}
};
\endxy
 \ \stackrel{\eqref{dots-past-merges}}{=} \ 
\frac{1}{2}\left(
\xy
(0,0)*{
\begin{tikzpicture}[color=\clr, scale=.3]
	\draw [ color=\clr, thick, directed=1] (0,.75) to (0,2);
	\draw [ color=\clr, thick, directed=.85] (0,-2.75) to [out=30,in=330] (0,.75);
	\draw [ color=\clr, thick, directed=.85] (0,-2.75) to [out=150,in=210] (0,.75);
	\draw [ color=\clr, thick, directed=.65] (0,-4) to (0,-2.75);
	\node at (0,-4.5) {\scriptsize $2$};
	\node at (0,2.5) {\scriptsize $2$};
	\node at (-1.5,-1) {\scriptsize $1$};
	\node at (1.5,-1) {\scriptsize $1$};
	\draw (-0.875,-1) \wdot;
\end{tikzpicture}
};
\endxy+
\xy
(0,0)*{
\begin{tikzpicture}[color=\clr, scale=.3]
	\draw [ color=\clr, thick, directed=1] (0,.75) to (0,2);
	\draw [ color=\clr, thick, directed=.85] (0,-2.75) to [out=30,in=330] (0,.75);
	\draw [ color=\clr, thick, directed=.85] (0,-2.75) to [out=150,in=210] (0,.75);
	\draw [ color=\clr, thick, directed=.65] (0,-4) to (0,-2.75);
	\node at (0,-4.5) {\scriptsize $2$};
	\node at (0,2.5) {\scriptsize $2$};
	\node at (-1.5,-1) {\scriptsize $1$};
	\node at (1.5,-1) {\scriptsize $1$};
	\draw (0.875,-1) \wdot;
\end{tikzpicture}
};
\endxy\right).
\eeq
Next, we compose \cref{dots-past-merges} on bottom with 
$\xy
(0,0)*{
\begin{tikzpicture}[color=\clr, scale=.1] 
	\draw [ thick, directed=1] (2,-2) to (2,2.5);
	\draw [ thick, directed=1] (-1,-2) to (-1,2.5);
	\draw (-1,0) \wdot;
	\draw (2,0) \wdot;
\end{tikzpicture}
};
\endxy$
 followed by a split to get

\[
\xy
(0,0)*{
\begin{tikzpicture}[color=\clr, scale=.3]
	\draw [ color=\clr, thick, directed=1] (0,.75) to (0,2.5);
	\draw [ color=\clr, thick, directed=.35] (0,-2.75) to [out=30,in=330] (0,.75);
	\draw [ color=\clr, thick, directed=.35] (0,-2.75) to [out=150,in=210] (0,.75);
	\draw [ color=\clr, thick, directed=.65] (0,-4) to (0,-2.75);
	\node at (0,-4.5) {\scriptsize $2$};
	\node at (0,3) {\scriptsize $2$};
	\node at (-1.5,-1) {\scriptsize $1$};
	\node at (1.5,-1) {\scriptsize $1$};
	\draw (-0.85,-1) \wdot;
	\draw (0.85,-1) \wdot;
	\draw (0,1.5) \wdot;
\end{tikzpicture}
};
\endxy \ = \ 
\xy
(0,0)*{
\begin{tikzpicture}[color=\clr, scale=.3]
	\draw [ color=\clr, thick, directed=1] (0,.75) to (0,2);
	\draw [ color=\clr, thick, directed=.35] (0,-2.75) to [out=30,in=330] (0,.75);
	\draw [ color=\clr, thick, directed=.35] (0,-2.75) to [out=150,in=210] (0,.75);
	\draw [ color=\clr, thick, directed=.65] (0,-4) to (0,-2.75);
	\node at (0,-4.5) {\scriptsize $2$};
	\node at (0,2.5) {\scriptsize $2$};
	\node at (-1.5,-1) {\scriptsize $1$};
	\node at (1.5,-1) {\scriptsize $1$};
	\draw (-0.85,-1) \wdot;
	\draw (0.85,-1) \wdot;
	\draw (-0.65,0) \wdot;
\end{tikzpicture}
};
\endxy+
\xy
(0,0)*{
\begin{tikzpicture}[color=\clr, scale=.3]
	\draw [ color=\clr, thick, directed=1] (0,.75) to (0,2);
	\draw [ color=\clr, thick, directed=.35] (0,-2.75) to [out=30,in=330] (0,.75);
	\draw [ color=\clr, thick, directed=.35] (0,-2.75) to [out=150,in=210] (0,.75);
	\draw [ color=\clr, thick, directed=.65] (0,-4) to (0,-2.75);
	\node at (0,-4.5) {\scriptsize $2$};
	\node at (0,2.5) {\scriptsize $2$};
	\node at (-1.5,-1) {\scriptsize $1$};
	\node at (1.5,-1) {\scriptsize $1$};
	\draw (-0.85,-1) \wdot;
	\draw (0.85,-1) \wdot;
	\draw (0.65,0) \wdot;
\end{tikzpicture}
};
\endxy \ .
\]
Using \cref{2-dots-zero} on the left, and superinterchange and \cref{dot-collision} on the right, this becomes
\[
0 \ =
\xy
(0,0)*{
\begin{tikzpicture}[color=\clr, scale=.3]
	\draw [ color=\clr, thick, directed=1] (0,.75) to (0,2);
	\draw [ color=\clr, thick, directed=.35] (0,-2.75) to [out=30,in=330] (0,.75);
	\draw [ color=\clr, thick, directed=.35] (0,-2.75) to [out=150,in=210] (0,.75);
	\draw [ color=\clr, thick, directed=.65] (0,-4) to (0,-2.75);
	\node at (0,-4.5) {\scriptsize $2$};
	\node at (0,2.5) {\scriptsize $2$};
	\node at (-1.5,-1) {\scriptsize $1$};
	\node at (1.5,-1) {\scriptsize $1$};
	\draw (0.875,-1) \wdot;
\end{tikzpicture}
};
\endxy-
\xy
(0,0)*{
\begin{tikzpicture}[color=\clr, scale=.3]
	\draw [ color=\clr, thick, directed=1] (0,.75) to (0,2);
	\draw [ color=\clr, thick, directed=.35] (0,-2.75) to [out=30,in=330] (0,.75);
	\draw [ color=\clr, thick, directed=.35] (0,-2.75) to [out=150,in=210] (0,.75);
	\draw [ color=\clr, thick, directed=.65] (0,-4) to (0,-2.75);
	\node at (0,-4.5) {\scriptsize $2$};
	\node at (0,2.5) {\scriptsize $2$};
	\node at (-1.5,-1) {\scriptsize $1$};
	\node at (1.5,-1) {\scriptsize $1$};
	\draw (-0.875,-1) \wdot;
\end{tikzpicture}
};
\endxy.
\]
Combining the above with \cref{dot-on-2-strand} and symmetry, we have \cref{dot-on-k-strand} in case $k=2.$ For general $k$, we use \cref{dots-past-merges} repeatedly to get
\beq\label{dot-past-k-explosion}
\xy
(0,0)*{
\begin{tikzpicture}[color=\clr, scale=.3]
	\draw [ color=\clr, thick, directed=1] (0,.75) to (0,2.5);
	\draw [ color=\clr, thick, directed=.35] (0,-2.75) to [out=30,in=330] (0,.75);
	\draw [ color=\clr, thick, directed=.35] (0,-2.75) to [out=150,in=210] (0,.75);
	\draw [ color=\clr, thick, directed=.65] (0,-4) to (0,-2.75);
	\node at (0,-4.5) {\scriptsize $k$};
	\node at (0,3) {\scriptsize $k$};
	\node at (-1.5,-1) {\scriptsize $1$};
	\node at (1.5,-1) {\scriptsize $1$};
	\node at (0,-1) {\,$\cdots$};
	\draw (0,1.5) \wdot;
\end{tikzpicture}
};
\endxy=
\xy
(0,0)*{
\begin{tikzpicture}[color=\clr, scale=.3]
	\draw [ color=\clr, thick, directed=1] (0,.75) to (0,2);
	\draw [ color=\clr, thick, directed=.35] (0,-2.75) to [out=30,in=330] (0,.75);
	\draw [ color=\clr, thick, directed=.35] (0,-2.75) to [out=150,in=210] (0,.75);
	\draw [ color=\clr, thick, directed=.65] (0,-4) to (0,-2.75);
	\node at (0,-4.5) {\scriptsize $k$};
	\node at (0,2.5) {\scriptsize $k$};
	\node at (-1.5,-1) {\scriptsize $1$};
	\node at (1.5,-1) {\scriptsize $1$};
	\node at (0,-1) {\,$\cdots$};
	\draw (-0.75,-0.25) \wdot;
\end{tikzpicture}
};
\endxy+\cdots+
\xy
(0,0)*{
\begin{tikzpicture}[color=\clr, scale=.3]
	\draw [ color=\clr, thick, directed=1] (0,.75) to (0,2);
	\draw [ color=\clr, thick, directed=.35] (0,-2.75) to [out=30,in=330] (0,.75);
	\draw [ color=\clr, thick, directed=.35] (0,-2.75) to [out=150,in=210] (0,.75);
	\draw [ color=\clr, thick, directed=.65] (0,-4) to (0,-2.75);
	\node at (0,-4.5) {\scriptsize $k$};
	\node at (0,2.5) {\scriptsize $k$};
	\node at (-1.5,-1) {\scriptsize $1$};
	\node at (1.5,-1) {\scriptsize $1$};
	\node at (0,-1) {\,$\cdots$};
	\draw (0.75,-0.25) \wdot;
\end{tikzpicture}
};
\endxy
\eeq
where the sum is over the $k$ different webs with a dot on a unique $1$-strand. By \cref{associativity} and the $k=2$ case, the summands are pairwise equal and we have, for example, 
\beq\label{dot-goes-left}
\xy
(0,0)*{
\begin{tikzpicture}[color=\clr, scale=.3]
	\draw [ color=\clr, thick, directed=1] (0,.75) to (0,2.5);
	\draw [ color=\clr, thick, directed=.35] (0,-2.75) to [out=30,in=330] (0,.75);
	\draw [ color=\clr, thick, directed=.35] (0,-2.75) to [out=150,in=210] (0,.75);
	\draw [ color=\clr, thick, directed=.65] (0,-4) to (0,-2.75);
	\node at (0,-4.5) {\scriptsize $k$};
	\node at (0,3) {\scriptsize $k$};
	\node at (-1.5,-1) {\scriptsize $1$};
	\node at (1.5,-1) {\scriptsize $1$};
	\node at (0,-1) {\,$\cdots$};
	\draw (0,1.5) \wdot;
\end{tikzpicture}
};
\endxy=(k)
\xy
(0,0)*{
\begin{tikzpicture}[color=\clr, scale=.3]
	\draw [ color=\clr, thick, directed=1] (0,.75) to (0,2);
	\draw [ color=\clr, thick, directed=.35] (0,-2.75) to [out=30,in=330] (0,.75);
	\draw [ color=\clr, thick, directed=.35] (0,-2.75) to [out=150,in=210] (0,.75);
	\draw [ color=\clr, thick, directed=.65] (0,-4) to (0,-2.75);
	\node at (0,-4.5) {\scriptsize $k$};
	\node at (0,2.5) {\scriptsize $k$};
	\node at (-1.5,-1) {\scriptsize $1$};
	\node at (1.5,-1) {\scriptsize $1$};
	\node at (0,-1) {\,$\cdots$};
	\draw (-0.75,-0.25) \wdot;
\end{tikzpicture}
};
\endxy
\eeq
where, on the right, only the leftmost 1-strand has a dot. We finish the proof by computing that
\[
\xy
(0,0)*{
\begin{tikzpicture}[color=\clr, scale=.3]
	\draw [ color=\clr, thick, directed=1] (0,-4) to (0,2);
	\node at (0,-4.5) {\scriptsize $k$};
	\node at (0,2.5) {\scriptsize $k$};
	\draw (0,1) \wdot;
\end{tikzpicture}
};
\endxy
 \ \stackrel{\eqref{digon-removal}}{=} \ 
\frac{1}{k!}
\xy
(0,0)*{
\begin{tikzpicture}[color=\clr, scale=.3]
	\draw [ color=\clr, thick, directed=1] (0,.75) to (0,2.5);
	\draw [ color=\clr, thick, directed=.35] (0,-2.75) to [out=30,in=330] (0,.75);
	\draw [ color=\clr, thick, directed=.35] (0,-2.75) to [out=150,in=210] (0,.75);
	\draw [ color=\clr, thick, directed=.65] (0,-4) to (0,-2.75);
	\node at (0,-4.5) {\scriptsize $k$};
	\node at (0,3) {\scriptsize $k$};
	\node at (-1.5,-1) {\scriptsize $1$};
	\node at (1.5,-1) {\scriptsize $1$};
	\node at (0,-1) {\,$\cdots$};
	\draw (0,1.5) \wdot;
\end{tikzpicture}
};
\endxy
 \ \stackrel{\eqref{dot-goes-left}}{=} \ 
\frac{1}{(k-1)!}
\xy
(0,0)*{
\begin{tikzpicture}[color=\clr, scale=.3]
	\draw [ color=\clr, thick, directed=1] (0,.75) to (0,2);
	\draw [ color=\clr, thick, directed=.35] (0,-2.75) to [out=30,in=330] (0,.75);
	\draw [ color=\clr, thick, directed=.35] (0,-2.75) to [out=150,in=210] (0,.75);
	\draw [ color=\clr, thick, directed=.65] (0,-4) to (0,-2.75);
	\node at (0,-4.5) {\scriptsize $k$};
	\node at (0,2.5) {\scriptsize $k$};
	\node at (-1.5,-1) {\scriptsize $1$};
	\node at (1.5,-1) {\scriptsize $1$};
	\node at (0,-1) {\,$\cdots$};
	\draw (-0.75,-0.25) \wdot;
\end{tikzpicture}
};
\endxy
 \ \stackrel{\eqref{digon-removal}}{=} \ 
\xy
(0,0)*{
\begin{tikzpicture}[color=\clr, scale=.3]
	\draw [ color=\clr, thick, directed=1] (0,.75) to (0,2);
	\draw [ color=\clr, thick, directed=.35] (0,-2.75) to [out=30,in=330] (0,.75);
	\draw [ color=\clr, thick, directed=.35] (0,-2.75) to [out=150,in=210] (0,.75);
	\draw [ color=\clr, thick, directed=.65] (0,-4) to (0,-2.75);
	\node at (0,-4.5) {\scriptsize $k$};
	\node at (0,2.5) {\scriptsize $k$};
	\node at (-1.5,-1) {\scriptsize $1$};
	\node at (1.75,-1) { \ \scriptsize $k\!-\!1$};
	\draw (-0.875,-1) \wdot;
\end{tikzpicture}
};
\endxy
\]
and noting that the other side of \cref{dot-on-k-strand} follows by symmetry.
\end{proof}

\begin{lemma}\label{Udot-relations}
The following local relations hold in $\W$ for $h,k,l,s,t \geq 0$: 
\begin{equation}\label{rung-collision}
\xy
(0,0)*{
\begin{tikzpicture}[color=\clr]
	\draw [color=\clr, thick, directed=.15, directed=1, directed=.55] (0,0) to (0,1.75);
	\node at (0,-0.15) {\scriptsize $k$};
	\node at (0,1.9) {\scriptsize $k\!+\!s\!+\!t$ \ };
	\draw [color=\clr, thick, directed=.15, directed=1, directed=.55] (1,0) to (1,1.75);
	\node at (1,-0.15) {\scriptsize $l$};
	\node at (1,1.9) {\scriptsize  \ $l\!-\!s\!-\!t$};
	\draw [color=\clr, thick, directed=.55] (1,0.5) to (0,0.5);
	\node at (0.5,0.25) {\scriptsize$r$};
	\node at (-0.5,0.875) {\scriptsize $k\!+\!s$};
	\draw [color=\clr, thick, directed=.55] (1,1.25) to (0,1.25);
	\node at (0.5,1.5) {\scriptsize$s$};
	\node at (1.5,0.875) {\scriptsize $l\!-\!s$};
\end{tikzpicture}
};
\endxy=\binom{r+s}{s}
\xy
(0,0)*{
\begin{tikzpicture}[color=\clr]
	\draw [color=\clr, thick, directed=.15, directed=1] (0,0) to (0,1.75);
	\node at (0,-0.15) {\scriptsize $k$};
	\node at (0,1.9) {\scriptsize $k\!+\!s\!+\!t$ \ };
	\draw [color=\clr, thick, directed=.15, directed=1] (1,0) to (1,1.75);
	\node at (1,-0.15) {\scriptsize $l$};
	\node at (1,1.9) {\scriptsize  \ $l\!-\!s\!-\!t$};
	\draw [color=\clr, thick, directed=.55] (1,0.75) to (0,0.75);
	\node at (0.5,1) {\scriptsize$s\!+\!t$};
\end{tikzpicture}
};
\endxy,
\end{equation}
\begin{equation}\label{square-switch-double-dots}
\xy
(0,0)*{
\begin{tikzpicture}[color=\clr]
	\draw [color=\clr, thick, directed=.15, directed=1, directed=.55] (0,0) to (0,1.75);
	\node at (0,-0.15) {\scriptsize $k$};
	\node at (0,1.9) {\scriptsize $k$};
	\draw [color=\clr, thick, directed=.15, directed=1, directed=.55] (1,0) to (1,1.75);
	\node at (1,-0.15) {\scriptsize $l$};
	\node at (1,1.9) {\scriptsize $l$};
	\draw [color=\clr, thick, directed=.55] (0,0.5) to (1,0.5);
	\node at (0.5,0.25) {\scriptsize$1$};
	\node at (-0.5,0.875) {\scriptsize $k\!-\!1$};
	\draw [color=\clr, thick, directed=.55] (1,1.25) to (0,1.25);
	\node at (0.5,1.5) {\scriptsize$1$};
	\node at (1.5,0.875) {\scriptsize $l\!+\!1$};
	\draw  (0.75,0.5) \wdot;
	\draw  (0.25,1.25) \wdot;
\end{tikzpicture}
};
\endxy+
\xy
(0,0)*{
\begin{tikzpicture}[color=\clr]
	\draw [color=\clr, thick, directed=.15, directed=1, directed=.55] (0,0) to (0,1.75);
	\node at (0,-0.15) {\scriptsize $k$};
	\node at (0,1.9) {\scriptsize $k$};
	\draw [color=\clr, thick, directed=.15, directed=1, directed=.55] (1,0) to (1,1.75);
	\node at (1,-0.15) {\scriptsize $l$};
	\node at (1,1.9) {\scriptsize $l$};
	\draw [color=\clr, thick, directed=.55] (1,0.5) to (0,0.5);
	\node at (0.5,0.25) {\scriptsize$1$};
	\node at (-0.5,0.875) {\scriptsize $k\!+\!1$};
	\draw [color=\clr, thick, directed=.55] (0,1.25) to (1,1.25);
	\node at (0.5,1.5) {\scriptsize$1$};
	\node at (1.5,0.875) {\scriptsize $l\!-\!1$};
	\draw  (0.75,0.5) \wdot;
	\draw  (0.25,1.25) \wdot;
\end{tikzpicture}
};
\endxy=(k+l)
\xy
(0,0)*{
\begin{tikzpicture}[color=\clr, scale=.3] 
	\draw [color=\clr, thick, directed=1] (1,-2.75) to (1,2.5);
	\draw [color=\clr, thick, directed=1] (-1,-2.75) to (-1,2.5);
	\node at (-1,3) {\scriptsize $k$};
	\node at (1,3) {\scriptsize $l$};
	\node at (-1,-3.3) {\scriptsize $k$};
	\node at (1,-3.3) {\scriptsize $l$};
\end{tikzpicture}
};
\endxy \ , 
\end{equation}
\begin{equation}\label{double-rungs-3}
\xy
(0,0)*{
\begin{tikzpicture}[color=\clr]
	\draw [color=\clr, thick, directed=.15, directed=1] (-1,0) to (-1,1.75);
	\node at (-1,-0.15) {\scriptsize $h$};
	\node at (-1,1.9) {\scriptsize $h\!+\!2$};
	\draw [color=\clr, thick, directed=.15, directed=1, directed=.55] (0,0) to (0,1.75);
	\node at (0,-0.15) {\scriptsize $k$};
	\node at (0,1.9) {\scriptsize $k\!-\!1$};
	\draw [color=\clr, thick, directed=.15, directed=1] (1,0) to (1,1.75);
	\node at (1,-0.15) {\scriptsize $l$};
	\node at (1,1.9) {\scriptsize $l\!-\!1$};
	\draw [color=\clr, thick, directed=.55] (1,0.5) to (0,0.5);
	\node at (0.5,0.25) {\scriptsize$1$};
	\draw [color=\clr, thick, directed=.55] (0,1.25) to (-1,1.25);
	\node at (-0.5,1.5) {\scriptsize$2$};
\end{tikzpicture}
};
\endxy-
\xy
(0,0)*{
\begin{tikzpicture}[color=\clr]
	\draw [color=\clr, thick, directed=.15, directed=1, directed=.55] (-1,0) to (-1,1.75);
	\node at (-1,-0.15) {\scriptsize $h$};
	\node at (-1,1.9) {\scriptsize $h\!+\!2$};
	\draw [color=\clr, thick, directed=.15, directed=1] (0,0) to (0,1.75);
	\node at (0,-0.15) {\scriptsize $k$};
	\node at (0,1.9) {\scriptsize $k\!-\!1$};
	\draw [color=\clr, thick, directed=.15, directed=1] (1,0) to (1,1.75);
	\node at (1,-0.15) {\scriptsize $l$};
	\node at (1,1.9) {\scriptsize $l\!-\!1$};
	\draw [color=\clr, thick, directed=.55] (0,0.5) to (-1,0.5);
	\node at (-0.5,0.25) {\scriptsize$1$};
	\draw [color=\clr, thick, directed=.55] (1,0.875) to (0,0.875);
	\node at (0.5,1.125) {\scriptsize$1$};
	\draw [color=\clr, thick, directed=.55] (0,1.25) to (-1,1.25);
	\node at (-0.5,1.5) {\scriptsize$1$};
\end{tikzpicture}
};
\endxy+
\xy
(0,0)*{
\begin{tikzpicture}[color=\clr]
	\draw [color=\clr, thick, directed=.15, directed=1] (-1,0) to (-1,1.75);
	\node at (-1,-0.15) {\scriptsize $h$};
	\node at (-1,1.9) {\scriptsize $h\!+\!2$};
	\draw [color=\clr, thick, directed=.15, directed=1, directed=.55] (0,0) to (0,1.75);
	\node at (0,-0.15) {\scriptsize $k$};
	\node at (0,1.9) {\scriptsize $k\!-\!1$};
	\draw [color=\clr, thick, directed=.15, directed=1] (1,0) to (1,1.75);
	\node at (1,-0.15) {\scriptsize $l$};
	\node at (1,1.9) {\scriptsize $l\!-\!1$};
	\draw [color=\clr, thick, directed=.55] (1,1.25) to (0,1.25);
	\node at (0.5,1.5) {\scriptsize$1$};
	\draw [color=\clr, thick, directed=.55] (0,0.5) to (-1,0.5);
	\node at (-0.5,0.25) {\scriptsize$2$};
\end{tikzpicture}
};
\endxy=0,
\end{equation}
\begin{equation}\label{double-rungs-4}
\xy
(0,0)*{
\begin{tikzpicture}[color=\clr]
	\draw [color=\clr, thick, directed=.15, directed=1] (-1,0) to (-1,1.75);
	\node at (-1,-0.15) {\scriptsize $h$};
	\node at (-1,1.9) {\scriptsize $h\!+\!2$};
	\draw [color=\clr, thick, directed=.15, directed=1] (0,0) to (0,1.75);
	\node at (0,-0.15) {\scriptsize $k$};
	\node at (0,1.9) {\scriptsize $k\!-\!1$};
	\draw [color=\clr, thick, directed=.15, directed=1] (1,0) to (1,1.75);
	\node at (1,-0.15) {\scriptsize $l$};
	\node at (1,1.9) {\scriptsize $l\!-\!1$};
	\draw [color=\clr, thick, directed=.55] (1,0.5) to (0,0.5);
	\node at (0.5,0.25) {\scriptsize$1$};
	\draw [color=\clr, thick, directed=.55] (0,0.875) to (-1,0.875);
	\node at (-0.5,0.65) {\scriptsize$1$};
	\draw [color=\clr, thick, directed=.55] (0,1.25) to (-1,1.25);
	\node at (-0.5,1.5) {\scriptsize$1$};
	\draw  (-0.75,1.25) \wdot;
\end{tikzpicture}
};
\endxy -
\xy
(0,0)*{
\begin{tikzpicture}[color=\clr]
	\draw [color=\clr, thick, directed=.15, directed=1, directed=.55] (-1,0) to (-1,1.75);
	\node at (-1,-0.15) {\scriptsize $h$};
	\node at (-1,1.9) {\scriptsize $h\!+\!2$};
	\draw [color=\clr, thick, directed=.15, directed=1] (0,0) to (0,1.75);
	\node at (0,-0.15) {\scriptsize $k$};
	\node at (0,1.9) {\scriptsize $k\!-\!1$};
	\draw [color=\clr, thick, directed=.15, directed=1] (1,0) to (1,1.75);
	\node at (1,-0.15) {\scriptsize $l$};
	\node at (1,1.9) {\scriptsize $l\!-\!1$};
	\draw [color=\clr, thick, directed=.55] (0,0.5) to (-1,0.5);
	\node at (-0.5,0.25) {\scriptsize$1$};
	\draw [color=\clr, thick, directed=.55] (1,0.875) to (0,0.875);
	\node at (0.5,1.125) {\scriptsize$1$};
	\draw [color=\clr, thick, directed=.55] (0,1.25) to (-1,1.25);
	\node at (-0.5,1.5) {\scriptsize$1$};
	\draw  (-0.75,1.25) \wdot;
\end{tikzpicture}
};
\endxy-
\xy
(0,0)*{
\begin{tikzpicture}[color=\clr]
	\draw [color=\clr, thick, directed=.15, directed=1, directed=.55] (-1,0) to (-1,1.75);
	\node at (-1,-0.15) {\scriptsize $h$};
	\node at (-1,1.9) {\scriptsize $h\!+\!2$};
	\draw [color=\clr, thick, directed=.15, directed=1] (0,0) to (0,1.75);
	\node at (0,-0.15) {\scriptsize $k$};
	\node at (0,1.9) {\scriptsize $k\!-\!1$};
	\draw [color=\clr, thick, directed=.15, directed=1] (1,0) to (1,1.75);
	\node at (1,-0.15) {\scriptsize $l$};
	\node at (1,1.9) {\scriptsize $l\!-\!1$};
	\draw [color=\clr, thick, directed=.55] (0,0.5) to (-1,0.5);
	\node at (-0.5,0.25) {\scriptsize$1$};
	\draw [color=\clr, thick, directed=.55] (1,0.875) to (0,0.875);
	\node at (0.5,1.125) {\scriptsize$1$};
	\draw [color=\clr, thick, directed=.55] (0,1.25) to (-1,1.25);
	\node at (-0.5,1.5) {\scriptsize$1$};
	\draw  (-0.25,0.5) \wdot;
\end{tikzpicture}
};
\endxy+
\xy
(0,0)*{
\begin{tikzpicture}[color=\clr]
	\draw [color=\clr, thick, directed=.15, directed=1] (-1,0) to (-1,1.75);
	\node at (-1,-0.15) {\scriptsize $h$};
	\node at (-1,1.9) {\scriptsize $h\!+\!2$};
	\draw [color=\clr, thick, directed=.15, directed=1] (0,0) to (0,1.75);
	\node at (0,-0.15) {\scriptsize $k$};
	\node at (0,1.9) {\scriptsize $k\!-\!1$};
	\draw [color=\clr, thick, directed=.15, directed=1] (1,0) to (1,1.75);
	\node at (1,-0.15) {\scriptsize $l$};
	\node at (1,1.9) {\scriptsize $l\!-\!1$};
	\draw [color=\clr, thick, directed=.55] (0,0.5) to (-1,0.5);
	\node at (-0.5,0.25) {\scriptsize$1$};
	\draw [color=\clr, thick, directed=.55] (0,0.875) to (-1,0.875);
	\node at (-0.5,1.125) {\scriptsize$1$};
	\draw [color=\clr, thick, directed=.55] (1,1.25) to (0,1.25);
	\node at (0.5,1.5) {\scriptsize$1$};
	\draw  (-0.25,0.5) \wdot;
\end{tikzpicture}
};
\endxy=0.
\end{equation}
In addition, we have the local relations obtained by reversing all rung orientations\footnote{Relabeling the tops of diagrams as needed to make a valid web.} of the ladders in \cref{rung-collision}.  We also have the local relations obtained from  \cref{double-rungs-3,double-rungs-4} by reversing all rung orientations, reflecting across the vertical line through the middle arrow, and both reversing and reflecting.  Hence \cref{double-rungs-3} and \cref{double-rungs-4} each represent four local relations. 
\end{lemma}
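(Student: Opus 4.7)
The plan is to derive each of the four local relations from the defining relations \eqref{associativity}--\eqref{double-rungs-2} together with the consequences \eqref{2-dots-zero} and \eqref{dot-on-k-strand} of \cref{additional-relations}. The unifying tactic is to use \eqref{digon-removal} (explosion) and associativity \eqref{associativity} to rewrite thick rungs in terms of thin rungs, at which point the atomic two- and three-strand relations apply directly. We may then reassemble, collecting scalar factors produced by explosions and dot collisions.

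For \eqref{rung-collision}, the idea is to apply associativity to slide the upper and lower rungs together, so that the middle strand of thickness ${k+s}$ (resp. $l-s$) forms a short digon of thickness equal to the combined rung width. Collapsing that digon via \eqref{digon-removal} produces the single rung on the right-hand side, with the advertised binomial coefficient arising as the scalar from explosion. The reflected (orientation-reversed) variant follows by the same argument after swapping the roles of merges and splits.

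For \eqref{square-switch-double-dots}, I would apply \eqref{square-switch-dots} to each of the two ladders on the left-hand side to rewrite them in terms of pairs of parallel strands carrying a single dot on either the $k$- or $l$-strand. By \eqref{dot-on-k-strand}, the exact placement of the dot on the exploded 1-strands does not matter, so the resulting terms may be combined. Adding up the expressions and recognizing the scalar $(k+l)$ as the sum of the individual weights (via \eqref{dot-collision} in the style of \cref{additional-relations}) yields the stated equality.

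For \eqref{double-rungs-3} and \eqref{double-rungs-4}, the plan is to explode the width-$2$ rung into two width-$1$ rungs using \eqref{digon-removal}, reducing each equation to an identity among three-rung ladders with only width-$1$ rungs. To these, \eqref{double-rungs-1} applies for \eqref{double-rungs-3}, while \eqref{double-rungs-2} applies for \eqref{double-rungs-4}; after substitution, the three terms cancel in pairs. For \eqref{double-rungs-4}, \eqref{dots-past-merges} must additionally be used to migrate the dot through each merge/split that gets introduced or removed by the explosion. The main obstacle will be the bookkeeping in \eqref{double-rungs-4}: each application of \eqref{dots-past-merges} expands one term into two, and the superinterchange sign in \eqref{superinterchange} can flip signs when two dots cross, so the cancellation must be verified carefully against the signs in the target identity. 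The reflected and orientation-reversed variants listed at the end follow from the same argument applied to the correspondingly reflected or reversed versions of \eqref{double-rungs-1}, \eqref{double-rungs-2}, and \eqref{dots-past-merges}.
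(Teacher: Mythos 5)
Your treatment of \eqref{rung-collision} --- use \eqref{associativity} to slide the two rungs together and then collapse the resulting digon by \eqref{digon-removal} --- is exactly the paper's argument. The other three items, however, have genuine gaps. For \eqref{square-switch-double-dots} your first step does not typecheck: each ladder on the left-hand side carries a dot on \emph{both} of its rungs, whereas \eqref{square-switch-dots} is a relation among ladders with a single dotted rung, so it cannot be ``applied to each of the two ladders'' as you propose, and nothing in your cited toolkit then pairs the two dots into the even scalar $k+l$. The paper's route is different: it applies the undotted square switch \eqref{square-switch} on the intermediate edges labelled $l+1$ and $k+1$ to reorder the rungs, and then uses two applications of \eqref{digon-removal} to bring the two dotted $1$-strands into the merge-then-split configuration of the dumbbell relation \eqref{dumbbell-relation}, which is where the scalar comes from.

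For \eqref{double-rungs-3} and \eqref{double-rungs-4}, exploding the $2$-rung is a reasonable opening move, but the claim that the three resulting terms ``cancel in pairs'' after one application of \eqref{double-rungs-1} (resp.\ \eqref{double-rungs-2}) is false. Writing $E_1,E_2$ for the two $1$-rungs, the exploded form of \eqref{double-rungs-3} is $\tfrac12\bigl(E_1(E_1E_2-E_2E_1)-(E_1E_2-E_2E_1)E_1\bigr)=0$, and \eqref{double-rungs-1} replaces the inner commutator by $\bar E_1\bar E_2+\bar E_2\bar E_1$ (dotted rungs); one is then left to prove that $E_1$ commutes with this sum of odd terms, which requires \eqref{double-rungs-2}, the dot-migration relations \eqref{dots-past-merges} and \eqref{dot-on-k-strand}, and \eqref{2-dots-zero} --- none of which appear in your plan for \eqref{double-rungs-3}. (The paper instead runs \eqref{double-rungs-3} through \eqref{dumbbell-relation}, \eqref{2-dots-zero}, \eqref{square-switch} and \eqref{digon-removal}, reserving \eqref{double-rungs-2} for \eqref{double-rungs-4}.) Your overall strategy of reducing to thin rungs is salvageable, but as written the argument breaks down at precisely the points where the odd generators have to be eliminated.
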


\begin{proof}
We sketch the proof and leave the details to the reader. The proof of \cref{rung-collision} follows from \cref{associativity} and \cref{digon-removal}. The proof of \cref{square-switch-double-dots} is similar to \cite[Lemma 2.10(b)]{TVW}. Its proof involves applications of \cref{square-switch} on the edges labeled $l+1$ and $k+1$ in the webs on the left, followed by two applications of \cref{digon-removal} and the \cref{dumbbell-relation}.

The proof of \cref{double-rungs-3} and \cref{double-rungs-4} are similar to \cite[Lemma 2.10(c)]{TVW}. The former involves using \cref{dumbbell-relation}  on the parallel ladder rungs in the middle web, followed by \cref{associativity},  \cref{additional-relations}\cref{2-dots-zero}, \cref{square-switch}, and two applications of \cref{digon-removal}. The proof of the latter is similar, but it also requires \cref{double-rungs-2}.
\end{proof}


\subsection{Clasp idempotents}\label{clasp-sec}


\begin{definition}\label{clasp-def}
For $k>0$ we locally define the \emph{$k^{\th}$ clasp} to be
\[
Cl_k
=\frac{1}{k!}
\xy
(0,0)*{
\begin{tikzpicture}[color=\clr, scale=.3]
	\draw [ thick, directed=.55] (0,-1) to (0,.75);
	\draw [ thick, directed=1] (0,.75) to [out=30,in=270] (1,2.5);
	\draw [ thick, directed=1] (0,.75) to [out=150,in=270] (-1,2.5); 
	\draw [ thick, directed=.65] (1,-2.75) to [out=90,in=330] (0,-1);
	\draw [ thick, directed=.65] (-1,-2.75) to [out=90,in=210] (0,-1);
	\node at (-1,3) {\scriptsize $1$};
	\node at (0.1,1.75) {$\cdots$};
	\node at (1,3) {\scriptsize $1$};
	\node at (-1,-3.3) {\scriptsize $1$};
	\node at (0.1,-2.35) {$\cdots$};
	\node at (1,-3.3) {\scriptsize $1$};
	\node at (0.75,-0.25) {\scriptsize $k$};
\end{tikzpicture}
};
\endxy
\]
where the dots indicate $k$ separate 1-strands. By \eqref{complete-explosion}, $Cl_k$ is idempotent.
\end{definition}

\begin{lemma}\label{clasp-recursion}
The following local relation holds in $\W$ for $k>1$:
\[
Cl_k
= \frac{k-1}{k}
\xy
(0,0)*{
\begin{tikzpicture}[color=\clr, scale=.3]
	\draw [ thick] (-2,-9) to (-2,-8);
	\draw [ thick] (0,-9) to (0,-8);
	\draw [ thick] (2,-9) to (2,-8);
	\draw [ thick] (4,-9) to (4,-5.85);
	\draw [ thick, directed=1] (-2,-0) to (-2,1);
	\draw [ thick, directed=1] (0,0) to (0,1);
	\draw [ thick, directed=1] (2,0) to (2,1);
	\draw [ thick, directed=1] (4,-2.15) to (4,1);
	\draw [ thick, directed=0.75] (-2,-5.85) to (-2,-2.15);
	\draw [ thick, directed=0.75] (0,-5.85) to (0,-2.15);
	\draw [ thick, directed=0.75] (3,-4.5) to (3,-3.5);
	\draw [ thick] (-2.3,-8) rectangle (2.3,-5.85);
	\draw [ thick] (-2.3,-2.15) rectangle (2.3,0);
	\draw [ thick] (3,-3.5) to [out=30,in=270] (4,-2.15);
	\draw [ thick] (3,-3.5) to [out=150,in=270] (2,-2.15); 
	\draw [ thick] (4,-5.85) to [out=90,in=330] (3,-4.5);
	\draw [ thick] (2,-5.85) to [out=90,in=210] (3,-4.5);
	\node at (-2,-9.55) {\scriptsize $1$};
	\node at (0,-9.55) {\scriptsize $1$};
	\node at (2,-9.55) {\scriptsize $1$};
	\node at (4,-9.55) {\scriptsize $1$};
	\node at (-2,1.55) {\scriptsize $1$};
	\node at (0,1.55) {\scriptsize $1$};
	\node at (2,1.55) {\scriptsize $1$};
	\node at (4,1.55) {\scriptsize $1$};
	\node at (3.65,-4) {\scriptsize $2$};
	\node at (-2.5,-4) {\scriptsize $1$};
	\node at (0.5,-4) {\scriptsize $1$};
	\node at (-1,0.25) { \ $\cdots$\,};
	\node at (-1,-4) { \ $\cdots$\,};
	\node at (-1,-8.5) { \ $\cdots$\,};
	\node at (0,-1.125) {\scriptsize $Cl_{k-1}$};
	\node at (0,-6.975) {\scriptsize $Cl_{k-1}$};
\end{tikzpicture}
};
\endxy
-\frac{k-2}{k}
\xy
(0,0)*{
\begin{tikzpicture}[color=\clr, scale=.3]
	\draw [ thick, ] (-2,-7) to (-2,-6);
	\draw [ thick, ] (0,-7) to (0,-6);
	\draw [ thick, ] (2,-7) to (2,-6);
	\draw [ thick, directed=1] (-2,-2) to (-2,-1);
	\draw [ thick, directed=1] (0,-2) to (0,-1);
	\draw [ thick, directed=1] (2,-2) to (2,-1);
	\draw [ thick, directed=1] (4,-7) to (4,-1);
	\draw [ thick] (-2.3,-6) rectangle (2.3,-2);
	\node at (-2,-7.55) {\scriptsize $1$};
	\node at (0,-7.55) {\scriptsize $1$};
	\node at (2,-7.55) {\scriptsize $1$};
	\node at (4,-7.55) {\scriptsize $1$};
	\node at (-2,-.45) {\scriptsize $1$};
	\node at (0,-.45) {\scriptsize $1$};
	\node at (2,-.45) {\scriptsize $1$};
	\node at (4,-.45) {\scriptsize $1$};
	\node at (-1,-1.75) { \ $\cdots$\,};
	\node at (-1,-6.5) { \ $\cdots$\,};
	\node at (0,-4) { $Cl_{k-1}$};
\end{tikzpicture}
};
\endxy \ .
\]
\end{lemma}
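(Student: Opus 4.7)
The plan is to apply the square-switch relation \eqref{square-switch} to the central bigon in a natural expansion of $Cl_k$, and then identify the resulting ladder term, after one application of \eqref{complete-explosion}, with a scalar multiple of the first web on the right-hand side of the lemma.

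By \eqref{associativity} we may write $M_k = m_2\circ(M_{k-1}\otimes \mathrm{id}_1)$ and $S_k = (S_{k-1}\otimes\mathrm{id}_1)\circ s_2$, where $m_2\colon(k-1,1)\to k$ and $s_2\colon k\to(k-1,1)$ are the binary merge and split, and $S_{k-1}$, $M_{k-1}$ denote the complete multi-split and multi-merge on $k-1$ strands. The definition of $Cl_k$ then gives $Cl_k = \tfrac{1}{k!}(S_{k-1}\otimes\mathrm{id}_1)(s_2\circ m_2)(M_{k-1}\otimes\mathrm{id}_1)$. Applying \eqref{square-switch} with parameters $k-1$ and $1$ to the central bigon yields $s_2\circ m_2 = \Lambda - (k-2)\,\mathrm{id}_{(k-1,1)}$, where $\Lambda$ denotes the two-rung ladder $(k-1,1)\to(k-2,2)\to(k-1,1)$. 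The identity summand, combined with $S_{k-1}M_{k-1} = (k-1)!\,Cl_{k-1}$ (direct from Definition \ref{clasp-def}), produces precisely the $-\tfrac{k-2}{k}(Cl_{k-1}\otimes\mathrm{id}_1)$ term of the lemma.

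It remains to identify $\tfrac{1}{k!}(S_{k-1}\otimes\mathrm{id}_1)\,\Lambda\,(M_{k-1}\otimes\mathrm{id}_1)$ with $\tfrac{k-1}{k}$ times the first web on the right of the lemma, namely $(Cl_{k-1}\otimes\mathrm{id}_1)\,D\,(Cl_{k-1}\otimes\mathrm{id}_1)$, where $D$ is the digon on the two rightmost $1$-strands. To do so I will expand each $Cl_{k-1}$ via $Cl_{k-1} = \tfrac{1}{(k-1)!}S_{k-1}M_{k-1}$ and isolate the central block $(M_{k-1}\otimes\mathrm{id}_1)\,D\,(S_{k-1}\otimes\mathrm{id}_1)$. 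Decomposing $M_{k-1}$ and $S_{k-1}$ one step further by \eqref{associativity}, commuting the factors acting on strands $1,\dots,k-2$ past $D$ via the superinterchange law (which contributes no sign, since all relevant generators are even), and collapsing the resulting $M_{k-2}S_{k-2}$ to $(k-2)!\,\mathrm{id}_{(k-2)}$ via \eqref{complete-explosion}, the central block reduces to $(k-2)!$ times a four-vertex web whose vertices can be grouped, by one final application of the superinterchange, into the two rungs of $\Lambda$.

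The main obstacle is the bookkeeping of scalar factors. Once the middle is identified as $(k-2)!\,\Lambda$, one obtains $(S_{k-1}\otimes\mathrm{id}_1)\,\Lambda\,(M_{k-1}\otimes\mathrm{id}_1) = (k-1)(k-1)!$ times the first web on the right, and multiplying by $\tfrac{1}{k!}$ gives the coefficient $\tfrac{(k-1)(k-1)!}{k!} = \tfrac{k-1}{k}$, completing the identification.
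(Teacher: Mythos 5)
Your proposal is correct and follows essentially the same route as the paper's proof: both isolate the last strand of $Cl_k$ as a two-rung ladder via associativity, apply \eqref{square-switch} with parameters $(k-1,1)$, and then use \eqref{associativity}, \eqref{complete-explosion}, and the definition of $Cl_{k-1}$ to identify the two resulting terms with the two webs on the right-hand side (with the correct coefficients $\tfrac{k-1}{k}$ and $-\tfrac{k-2}{k}$). The only cosmetic difference is that you verify the ladder term by expanding the target $(Cl_{k-1}\otimes\mathrm{id}_1)$-sandwich and matching it against $\tfrac{1}{k!}(S_{k-1}\otimes\mathrm{id}_1)\Lambda(M_{k-1}\otimes\mathrm{id}_1)$, whereas the paper expands the source; the scalar bookkeeping agrees either way.
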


\begin{proof}
We start by computing that
\[
Cl_k=\frac{1}{k!}
\xy
(0,0)*{
\begin{tikzpicture}[color=\clr, scale=.3]
	\draw [ thick, directed=.55] (0,-1) to (0,.75);
	\draw [ thick, directed=1] (0,.75) to [out=30,in=270] (1,2.5);
	\draw [ thick, directed=1] (0,.75) to [out=150,in=270] (-1,2.5); 
	\draw [ thick, directed=.65] (1,-2.75) to [out=90,in=330] (0,-1);
	\draw [ thick, directed=.65] (-1,-2.75) to [out=90,in=210] (0,-1);
	\node at (-1,3) {\scriptsize $1$};
	\node at (0.1,1.75) {$\cdots$};
	\node at (1,3) {\scriptsize $1$};
	\node at (-1,-3.3) {\scriptsize $1$};
	\node at (0.1,-2.35) {$\cdots$};
	\node at (1,-3.3) {\scriptsize $1$};
	\node at (0.75,-0.25) {\scriptsize $k$};
\end{tikzpicture}
};
\endxy=\frac{1}{k!}
\xy
(0,0)*{
\begin{tikzpicture}[color=\clr, scale=.3]
	\draw [ thick, directed=.55] (0,-1) to (0,2.5);
	\draw [ thick, directed=1] (0,2.5) to [out=30,in=270] (1,4.25);
	\draw [ thick, directed=1] (0,2.5) to [out=150,in=270] (-1,4.25); 
	\draw [ thick, directed=.65] (1,-2.75) to [out=90,in=330] (0,-1);
	\draw [ thick, directed=.65] (-1,-2.75) to [out=90,in=210] (0,-1);
	\draw [ thick, directed=1] (0,1.5) to (2,2.5) to (2,4.25);
	\draw [ thick, directed=0.35] (2,-2.75) to (2,-1) to (0,0);
	\node at (-1,4.75) {\scriptsize $1$};
	\node at (0.1,3.5) {$\cdots$};
	\node at (1,4.75) {\scriptsize $1$};
	\node at (2,4.75) {\scriptsize $1$};
	\node at (-1,-3.3) {\scriptsize $1$};
	\node at (0.1,-2.35) {$\cdots$};
	\node at (1,-3.3) {\scriptsize $1$};
	\node at (2,-3.3) {\scriptsize $1$};
	\node at (0.75,0.75) {\scriptsize $k$};
\end{tikzpicture}
};
\endxy\stackrel{\eqref{square-switch}}{=}
\frac{1}{k!}
\xy
(0,0)*{
\begin{tikzpicture}[color=\clr, scale=.3]
	\draw [ thick, directed=.55] (0,-1) to (0,3.25);
	\draw [ thick, directed=1] (0,3.25) to [out=30,in=270] (1,5);
	\draw [ thick, directed=1] (0,3.25) to [out=150,in=270] (-1,5); 
	\draw [ thick, directed=.65] (1,-2.75) to [out=90,in=330] (0,-1);
	\draw [ thick, directed=.65] (-1,-2.75) to [out=90,in=210] (0,-1);
	\draw [ thick, directed=0.65] (3,1.75) to (0,2.75);
	\draw [ thick, directed=0.65] (0,-0.5) to (3,0.5);
	\draw [ thick, directed=0.2, directed=0.525, directed=1] (3,-2.75) to (3,5);
	\node at (-1,5.5) {\scriptsize $1$};
	\node at (0.1,4.25) {$\cdots$};
	\node at (1,5.5) {\scriptsize $1$};
	\node at (3,5.5) {\scriptsize $1$};
	\node at (-1,-3.3) {\scriptsize $1$};
	\node at (0.1,-2.35) {$\cdots$};
	\node at (1,-3.3) {\scriptsize $1$};
	\node at (3,-3.3) {\scriptsize $1$};
	\node at (1.5,-0.75) {\ss $1$};
	\node at (1.5,3.25) {\ss $1$};
	\node at (3.75,1.15) {\ss $2$};
	\node at (-1.5,1) {\scriptsize $k\!-\!1$};
\end{tikzpicture}
};
\endxy-\frac{k-2}{k!}
\xy
(0,0)*{
\begin{tikzpicture}[color=\clr, scale=.3]
	\draw [ thick, directed=.55] (0,-1) to (0,2.5);
	\draw [ thick, directed=1] (0,2.5) to [out=30,in=270] (1,4.25);
	\draw [ thick, directed=1] (0,2.5) to [out=150,in=270] (-1,4.25); 
	\draw [ thick, directed=.65] (1,-2.75) to [out=90,in=330] (0,-1);
	\draw [ thick, directed=.65] (-1,-2.75) to [out=90,in=210] (0,-1);
	\draw [ thick, directed=1] (2,-2.75) to (2,4.25);
	\node at (-1,4.75) {\scriptsize $1$};
	\node at (0.1,3.5) {$\cdots$};
	\node at (1,4.75) {\scriptsize $1$};
	\node at (2,4.75) {\scriptsize $1$};
	\node at (-1,-3.3) {\scriptsize $1$};
	\node at (0.1,-2.35) {$\cdots$};
	\node at (1,-3.3) {\scriptsize $1$};
	\node at (2,-3.3) {\scriptsize $1$};
	\node at (-1.5,0.75) {\scriptsize $k\!-\!1$};
\end{tikzpicture}
};
\endxy \ .
\]
Applying \eqref{complete-explosion} to the $(k-1)$-strand of the web on the left, and then the definition of $Cl_{k-1}$ to both webs yields
\[
\frac{((k-1)!)^2}{k!(k-2)!}
\xy
(0,0)*{
\begin{tikzpicture}[color=\clr, scale=.3]
	\draw [ thick] (-2,-9) to (-2,-8);
	\draw [ thick] (0,-9) to (0,-8);
	\draw [ thick] (2,-9) to (2,-8);
	\draw [ thick] (4,-9) to (4,-5.85);
	\draw [ thick, directed=1] (-2,-0) to (-2,1);
	\draw [ thick, directed=1] (0,0) to (0,1);
	\draw [ thick, directed=1] (2,0) to (2,1);
	\draw [ thick, directed=1] (4,-2.15) to (4,1);
	\draw [ thick, directed=0.75] (-2,-5.85) to (-2,-2.15);
	\draw [ thick, directed=0.75] (0,-5.85) to (0,-2.15);
	\draw [ thick, directed=0.75] (3,-4.5) to (3,-3.5);
	\draw [ thick] (-2.3,-8) rectangle (2.3,-5.85);
	\draw [ thick] (-2.3,-2.15) rectangle (2.3,0);
	\draw [ thick] (3,-3.5) to [out=30,in=270] (4,-2.15);
	\draw [ thick] (3,-3.5) to [out=150,in=270] (2,-2.15); 
	\draw [ thick] (4,-5.85) to [out=90,in=330] (3,-4.5);
	\draw [ thick] (2,-5.85) to [out=90,in=210] (3,-4.5);
	\node at (-2,-9.55) {\scriptsize $1$};
	\node at (0,-9.55) {\scriptsize $1$};
	\node at (2,-9.55) {\scriptsize $1$};
	\node at (4,-9.55) {\scriptsize $1$};
	\node at (-2,1.55) {\scriptsize $1$};
	\node at (0,1.55) {\scriptsize $1$};
	\node at (2,1.55) {\scriptsize $1$};
	\node at (4,1.55) {\scriptsize $1$};
	\node at (3.65,-4) {\scriptsize $2$};
	\node at (-2.5,-4) {\scriptsize $1$};
	\node at (0.5,-4) {\scriptsize $1$};
	\node at (-1,0.25) { \ $\cdots$\,};
	\node at (-1,-4) { \ $\cdots$\,};
	\node at (-1,-8.5) { \ $\cdots$\,};
	\node at (0,-1.125) {\scriptsize $Cl_{k-1}$};
	\node at (0,-6.975) {\scriptsize $Cl_{k-1}$};
\end{tikzpicture}
};
\endxy-\frac{(k-2)(k-1)!}{k!}
\xy
(0,0)*{
\begin{tikzpicture}[color=\clr, scale=.3]
	\draw [ thick, ] (-2,-7) to (-2,-6);
	\draw [ thick, ] (0,-7) to (0,-6);
	\draw [ thick, ] (2,-7) to (2,-6);
	\draw [ thick, directed=1] (-2,-2) to (-2,-1);
	\draw [ thick, directed=1] (0,-2) to (0,-1);
	\draw [ thick, directed=1] (2,-2) to (2,-1);
	\draw [ thick, directed=1] (4,-7) to (4,-1);
	\draw [ thick] (-2.3,-6) rectangle (2.3,-2);
	\node at (-2,-7.55) {\scriptsize $1$};
	\node at (0,-7.55) {\scriptsize $1$};
	\node at (2,-7.55) {\scriptsize $1$};
	\node at (4,-7.55) {\scriptsize $1$};
	\node at (-2,-.45) {\scriptsize $1$};
	\node at (0,-.45) {\scriptsize $1$};
	\node at (2,-.45) {\scriptsize $1$};
	\node at (4,-.45) {\scriptsize $1$};
	\node at (-1,-1.75) { \ $\cdots$\,};
	\node at (-1,-6.5) { \ $\cdots$\,};
	\node at (0,-4) { $Cl_{k-1}$};
\end{tikzpicture}
};
\endxy \ ,
\]
which simplifies to the right side of the equation in the statement of the lemma.
\end{proof}


\subsection{Crossings and Sergeev diagrams}\label{crossing-sec}


\begin{definition}\label{crossing-def}
We locally define the \emph{crossing} of 1-strands to be
\[
\xy
(0,0)*{
\bt[color=\clr, scale=1.25]
	\draw[thick, directed=1] (0,0) to (0.5,0.5);
	\draw[thick, directed=1] (0.5,0) to (0,0.5);
	\node at (0,-0.15) {\scriptsize $1$};
	\node at (0,0.65) {\scriptsize $1$};
	\node at (0.5,-0.15) {\scriptsize $1$};
	\node at (0.5,0.65) {\scriptsize $1$};
\et
};
\endxy
 \ := \ 
\xy
(0,0)*{
\begin{tikzpicture}[color=\clr, scale=.3]
	\draw [ thick, directed=.75] (0,0.25) to (0,1.25);
	\draw [ thick, directed=1] (0,1.25) to [out=30,in=270] (1,2.5);
	\draw [ thick, directed=1] (0,1.25) to [out=150,in=270] (-1,2.5); 
	\draw [ thick, directed=.65] (1,-1) to [out=90,in=330] (0,0.25);
	\draw [ thick, directed=.65] (-1,-1) to [out=90,in=210] (0,0.25);
	\node at (-1,3) {\scriptsize $1$};
	\node at (1,3) {\scriptsize $1$};
	\node at (-1,-1.5) {\scriptsize $1$};
	\node at (1,-1.5) {\scriptsize $1$};
	\node at (-0.75,0.75) {\scriptsize $2$};
\end{tikzpicture}
};
\endxy
 \ - \ 
 \xy
(0,0)*{
\begin{tikzpicture}[color=\clr, scale=.3]
	\draw [thick, directed=1] (-2,-1) to (-2,2.5);
	\draw [thick, directed=1] (-4,-1) to (-4,2.5);
	\node at (-2,-1.5) {\scriptsize $1$};
	\node at (-2,3) {\scriptsize $1$};
	\node at (-4,-1.5) {\scriptsize $1$};
	\node at (-4,3) {\scriptsize $1$};
\end{tikzpicture}
};
\endxy.
\]
Using this, we locally define for $k,l>0$ the \emph{crossing} of a $k$- and an $l$-strand to be
\[
\xy
(0,0)*{
\bt[scale=1.25, color=\clr]
	\draw[thick, directed=1] (0,0) to (0.5,0.5);
	\draw[thick, directed=1] (0.5,0) to (0,0.5);
	\node at (0,-0.15) {\scriptsize $k$};
	\node at (0,0.65) {\scriptsize $l$};
	\node at (0.5,-0.15) {\scriptsize $l$};
	\node at (0.5,0.65) {\scriptsize $k$};
\et
};
\endxy :=\frac{1}{k!\,l!}
\xy
(0,0)*{
\bt[scale=.35, color=\clr]
	\draw [ thick, directed=1] (0, .75) to (0,1.5);
	\draw [ thick, directed=0.75] (1,-1) to [out=90,in=330] (0,.75);
	\draw [ thick, directed=0.75] (-1,-1) to [out=90,in=210] (0,.75);
	\draw [ thick, directed=1] (4, .75) to (4,1.5);
	\draw [ thick, directed=0.75] (5,-1) to [out=90,in=330] (4,.75);
	\draw [ thick, directed=0.75] (3,-1) to [out=90,in=210] (4,.75);
	\draw [ thick, directed=0.75] (0,-6.5) to (0,-5.75);
	\draw [ thick, ] (0,-5.75) to [out=30,in=270] (1,-4);
	\draw [ thick, ] (0,-5.75) to [out=150,in=270] (-1,-4); 
	\draw [ thick, directed=0.75] (4,-6.5) to (4,-5.75);
	\draw [ thick, ] (4,-5.75) to [out=30,in=270] (5,-4);
	\draw [ thick, ] (4,-5.75) to [out=150,in=270] (3,-4); 
	\draw [ thick ] (5,-1) to (1,-4);
	\draw [ thick ] (3,-1) to (-1,-4);
	\draw [ thick ] (1,-1) to (5,-4);
	\draw [ thick ] (-1,-1) to (3,-4);
	\node at (2.6, -0.5) {\scriptsize $1$};
	\node at (5.4, -0.5) {\scriptsize $1$};
	\node at (1.4, -0.5) {\scriptsize $1$};
	\node at (-1.4, -0.5) {\scriptsize $1$};
	\node at (2.6, -4.5) {\scriptsize $1$};
	\node at (5.4, -4.5) {\scriptsize $1$};
	\node at (1.4, -4.5) {\scriptsize $1$};
	\node at (-1.4, -4.5) {\scriptsize $1$};
	\node at (0.1, -0.65) { $\cdots$};
	\node at (4.1, -0.65) { $\cdots$};
	\node at (0.1, -4.6) { $\cdots$};
	\node at (4.1, -4.6) { $\cdots$};
	\node at (0,2) {\scriptsize $l$};
	\node at (4,2) {\scriptsize $k$};
	\node at (0,-7) {\scriptsize $k$};
	\node at (4,-7) {\scriptsize $l$};
\et
};
\endxy.
\]
\end{definition}

In this paper, we will only need to consider crossings with at least one of $k$ or $l$ equal to 1. Nevertheless, the above definition is justified by the next result. Recall from Lemma \ref{beta-theta-maps} the element $\a_\omega:=(1,\dots,1)\in\A(r)$.

\begin{proposition}\label{xi-map}
There exists a surjective homomorphism $\xi\colon\H\onto\a_\omega\W\a_\omega$ given by
\[
s_i\mapsto
\xy
(0,0)*{
\bt[color=\clr, scale=1.25]
	\draw[thick, directed=1] (-0.75,0) to (-0.75,0.5);
	\draw[thick, directed=1] (-0.25,0) to (-0.25,0.5);
	\draw[thick, directed=1] (0.75,0) to (0.75,0.5);
	\draw[thick, directed=1] (1.25,0) to (1.25,0.5);
	\draw[thick, directed=1] (0,0) to (0.5,0.5);
	\draw[thick, directed=1] (0.5,0) to (0,0.5);
	\node at (0,-0.15) {\scriptsize $1$};
	\node at (0,0.65) {\scriptsize $1$};
	\node at (0.5,-0.15) {\scriptsize $1$};
	\node at (0.5,0.65) {\scriptsize $1$};
	\node at (-0.25,-0.15) {\scriptsize $1$};
	\node at (-0.25,0.65) {\scriptsize $1$};
	\node at (-0.75,-0.15) {\scriptsize $1$};
	\node at (-0.75,0.65) {\scriptsize $1$};
	\node at (0.75,-0.15) {\scriptsize $1$};
	\node at (0.75,0.65) {\scriptsize $1$};
	\node at (1.25,-0.15) {\scriptsize $1$};
	\node at (1.25,0.65) {\scriptsize $1$};
	\node at (-0.5,0.25) { \ $\cdots$\,};
	\node at (1,0.25) { \ $\cdots$\,};
\et
};
\endxy \ ,
\quad c_j\mapsto
\xy
(0,0)*{
\bt[color=\clr, scale=1.25]
	\draw[thick, directed=1] (-0.75,0) to (-0.75,0.5);
	\draw[thick, directed=1] (-0.25,0) to (-0.25,0.5);
	\draw[thick, directed=1] (0.25,0) to (0.25,0.5);
	\draw[thick, directed=1] (0.75,0) to (0.75,0.5);
	\draw[thick, directed=1] (0,0) to (0,0.5);
	\node at (0,-0.15) {\scriptsize $1$};
	\node at (0,0.65) {\scriptsize $1$};
	\node at (-0.25,-0.15) {\scriptsize $1$};
	\node at (-0.25,0.65) {\scriptsize $1$};
	\node at (-0.75,-0.15) {\scriptsize $1$};
	\node at (-0.75,0.65) {\scriptsize $1$};
	\node at (0.25,-0.15) {\scriptsize $1$};
	\node at (0.25,0.65) {\scriptsize $1$};
	\node at (.75,-0.15) {\scriptsize $1$};
	\node at (.75,0.65) {\scriptsize $1$};
	\node at (-0.5,0.25) { \ $\cdots$\,};
	\node at (0.5,0.25) { \ $\cdots$\,};
	\draw (0,0.25) \wdot;
\et
};
\endxy
\]
for $1\leq i\leq r-1$ and $1\leq j\leq r$, where $s_i$ crosses the $i^{\th}$ and $(i+1)^{\st}$ strands and $c_j$ has a dot on the $j^{\th}$ strand. In particular, the following local relations hold in $\W$:
\beq\label{dots-past-crossings}
\xy
(0,0)*{
\bt[color=\clr, scale=1.25]
	\draw[thick, directed=1] (0,0) to (0.5,1);
	\draw[thick, directed=1] (0.5,0) to (0,1);
	\node at (0,-0.15) {\scriptsize $1$};
	\node at (0,1.15) {\scriptsize $1$};
	\node at (0.5,-0.15) {\scriptsize $1$};
	\node at (0.5,1.15) {\scriptsize $1$};
	\draw (0.15,0.3) \wdot;
\et
};
\endxy=
\xy
(0,0)*{
\bt[color=\clr, scale=1.25]
	\draw[thick, directed=1] (0,0) to (0.5,1);
	\draw[thick, directed=1] (0.5,0) to (0,1);
	\node at (0,-0.15) {\scriptsize $1$};
	\node at (0,1.15) {\scriptsize $1$};
	\node at (0.5,-0.15) {\scriptsize $1$};
	\node at (0.5,1.15) {\scriptsize $1$};
	\draw (0.35,0.7) \wdot;
\et
};
\endxy \ ,\quad\quad
\xy
(0,0)*{
\bt[color=\clr, scale=1.25]
	\draw[thick, directed=1] (0,0) to (0.5,1);
	\draw[thick, directed=1] (0.5,0) to (0,1);
	\node at (0,-0.15) {\scriptsize $1$};
	\node at (0,1.15) {\scriptsize $1$};
	\node at (0.5,-0.15) {\scriptsize $1$};
	\node at (0.5,1.15) {\scriptsize $1$};
	\draw (0.15,0.7) \wdot;
\et
};
\endxy=
\xy
(0,0)*{
\bt[color=\clr, scale=1.25]
	\draw[thick, directed=1] (0,0) to (0.5,1);
	\draw[thick, directed=1] (0.5,0) to (0,1);
	\node at (0,-0.15) {\scriptsize $1$};
	\node at (0,1.15) {\scriptsize $1$};
	\node at (0.5,-0.15) {\scriptsize $1$};
	\node at (0.5,1.15) {\scriptsize $1$};
	\draw (0.35,0.3) \wdot;
\et
};
\endxy \ ,
\eeq
\beq\label{braid-relation}
\xy
(0,0)*{
\bt[color=\clr, scale=1.25]
	\draw[thick, directed=1] (0,0) to (0.5,0.5);
	\draw[thick, directed=1] (0.5,0) to (0,0.5);
	\draw[thick, ] (0,-0.5) to (0.5,0);
	\draw[thick, ] (0.5,-0.5) to (0,0);
	\node at (0,-0.65) {\scriptsize $1$};
	\node at (0,.65) {\scriptsize $1$};
	\node at (0.5,-.65) {\scriptsize $1$};
	\node at (0.5,.65) {\scriptsize $1$};
\et
};
\endxy=
\xy
(0,0)*{
\bt[color=\clr, scale=1.25]
	\draw[thick, directed=1] (0.5,-0.5) to (0.5,0.5);
	\draw[thick, rdirected=0.05] (0,0.5) to (0,-0.5);
	\node at (0,-.65) {\scriptsize $1$};
	\node at (0,.65) {\scriptsize $1$};
	\node at (0.5,-.65) {\scriptsize $1$};
	\node at (0.5,.65) {\scriptsize $1$};
\et
};
\endxy \ ,\quad\quad
\xy
(0,0)*{
\bt[color=\clr, scale=1.25]
	\draw[thick, directed=1] (0,0) to (1,1) to (1,1.5);
	\draw[thick, directed=1] (0.5,0) to (0,0.5) to (0,1) to (0.5,1.5);
	\draw[thick, directed=1] (1,0) to (1,0.5) to (0.5,1) to (0,1.5);
	\node at (0,-0.15) {\scriptsize $1$};
	\node at (0,1.65) {\scriptsize $1$};
	\node at (0.5,-0.15) {\scriptsize $1$};
	\node at (0.5,1.65) {\scriptsize $1$};
	\node at (1,-0.15) {\scriptsize $1$};
	\node at (1,1.65) {\scriptsize $1$};
\et
};
\endxy=
\xy
(0,0)*{
\bt[color=\clr, scale=1.25]
	\draw[thick, directed=1] (0,0) to (0,0.5) to (1,1.5);
	\draw[thick, directed=1] (0.5,0) to (1,0.5) to (1,1) to (0.5,1.5);
	\draw[thick, directed=1] (1,0) to (0,1) to (0,1.5);
	\node at (0,-0.15) {\scriptsize $1$};
	\node at (0,1.65) {\scriptsize $1$};
	\node at (0.5,-0.15) {\scriptsize $1$};
	\node at (0.5,1.65) {\scriptsize $1$};
	\node at (1,-0.15) {\scriptsize $1$};
	\node at (1,1.65) {\scriptsize $1$};
\et
};
\endxy \ .
\eeq
\end{proposition}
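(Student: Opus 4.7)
The plan is to establish $\xi$ as a well-defined homomorphism by checking each defining relation in \eqref{hecke-relations} on the images of the generators; the asserted local identities \eqref{dots-past-crossings} and \eqref{braid-relation} will be proved in the process, as they are precisely the web translations of the mixed and remaining braid/R2 relations of $\H$. Surjectivity will be handled by a separate reduction argument.

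Many of the Sergeev relations are easy. The square $c_j^2 = 1$ is \eqref{dot-collision} with $k = 1$; the anticommutations $c_i c_j = -c_j c_i$ for $i \neq j$ and the distant commutations $s_i s_j = s_j s_i$ for $|i-j|>1$ both follow from the superinterchange law, since dots are the only odd generators while crossings are even. For $s_i^2 = 1$, I would expand $s_i$ via Definition~\ref{crossing-def} as a merge-then-split $M$ minus an identity $I$; then $s_i^2 = M^2 - 2M + I$, and \eqref{digon-removal} (with digon scalar $\binom{2}{1}=2$) yields $M^2 = 2M$, so $s_i^2 = I$, giving the first identity in \eqref{braid-relation}. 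For the mixed relations $c_i s_i = s_i c_{i+1}$ and $c_{i+1} s_i = s_i c_i$, which translate under $\xi$ to \eqref{dots-past-crossings}, I would expand each crossing as above and then push the dot across the merge or split using \eqref{dots-past-merges} and its reflection; after rewriting any dot appearing on the middle 2-strand as a sum of dots on the two 1-strands below the merge (same relation), the remaining terms cancel to give the desired equalities.

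The main obstacle is the braid relation $s_i s_{i+1} s_i = s_{i+1} s_i s_{i+1}$, i.e.\ the second identity in \eqref{braid-relation}. I would expand both triple products as linear combinations of composed dumbbells and identities via Definition~\ref{crossing-def} and then reduce by repeated application of the ladder relations \eqref{square-switch}, \eqref{square-switch-dots}, \eqref{double-rungs-1}, and \eqref{double-rungs-2}, together with \eqref{associativity} and \eqref{digon-removal}, until both sides collapse to a common expression in terms of composed dumbbells on three adjacent 1-strands. The computation is modeled on the braid verification in the CKM framework for type A, adapted to the super setting; it is lengthy but essentially routine once the correct sequence of local moves is identified.

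Finally, for surjectivity, every element of $\a_\omega \W \a_\omega$ is a linear combination of webs whose upper and lower boundaries consist entirely of 1-strands. Using \eqref{complete-explosion} to explode each internal $k$-strand into $k$ separate 1-strands, together with \eqref{associativity} to rebracket merges and splits, one reduces any such web to a linear combination of vertical compositions of elementary pieces, each of which is either a dot on a single 1-strand or a merge-then-split of two adjacent 1-strands through a 2-strand. These elementary pieces are exactly $\xi(c_j)$ and $\xi(s_i) + \xi(1)$, so the image of $\xi$ contains a spanning set for $\a_\omega \W \a_\omega$.
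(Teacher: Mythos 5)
Your treatment of well-definedness is sound and in fact more explicit than the paper, which simply leaves the verification of the images of \eqref{hecke-relations} to the reader; the local moves you name ($c_j^2=1$ from \eqref{dot-collision}, $s_i^2=1$ from \eqref{digon-removal} applied to the dumbbell, the mixed relations from \eqref{dots-past-merges}, the braid relation from the ladder relations) are the right ones.

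The surjectivity argument, however, has a genuine gap at its last step, and it is exactly where the real content lies. After applying \eqref{complete-explosion} to every internal edge and rebracketing with \eqref{associativity}, you do \emph{not} obtain a composition of dots and $2$-strand dumbbells: each original merge or split vertex with edge labels $k$, $l$, $k+l$ contributes a factor of the $(k+l)$-fold dumbbell $(k+l)!\,Cl_{k+l}$, i.e.\ $k+l$ adjacent $1$-strands merged into a single $(k+l)$-strand and split back. For $k+l\geq 3$ this is not an elementary piece in your sense, and explosion plus associativity cannot decompose it further --- applying \eqref{complete-explosion} to the internal $(k+l)$-strand of $Cl_{k+l}$ merely reproduces $Cl_{k+l}$, since clasps are idempotent. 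The missing ingredient is Lemma~\ref{clasp-recursion}, which rewrites $Cl_k$ in terms of $Cl_{k-1}$ and a single $2$-strand dumbbell and whose proof genuinely uses \eqref{square-switch}; iterating that recursion is what the paper uses to reduce the $m$-fold clasps to products of $\xi(s_i)+\a_\omega$ and $\xi(c_j)$. (You also omit the preliminary step of moving dots off thick strands onto $1$-strands via \eqref{dot-on-k-strand}, though that is minor.) As written, your assertion that every exploded web is a composition of dots and $2$-strand dumbbells is essentially the statement to be proved rather than a consequence of \eqref{complete-explosion} and \eqref{associativity}.
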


\begin{proof}
That the images of relations \eqref{hecke-relations} hold in $\W$ can be verified by direct calculations using the relations of $\W$, so we leave it to the reader. This implies $\xi$ is a well-defined homomorphism.

To show $\xi$ is surjective, we prove that every web $u\in\a_\omega\W\a_\omega$ can be expressed as a linear combination of products of $\xi(s_i)+\a_\omega$ and $\xi(c_j)$ for various $i,j$. Indeed, \eqref{dot-on-k-strand} ensures that every dot in $u$ lying on a strand with label greater than 1 can be moved onto a 1-strand. Next, for every merge in $u$ we apply \eqref{complete-explosion} to all three edges:
\[
\xy
(0,0)*{
\begin{tikzpicture}[color=\clr, scale=.5]
	\draw [ thick, directed=1] (-1,3) to (-1,6);
	\draw [ thick, rdirected=.55] (-1,3) to (-3,0);
	\draw [ thick, rdirected=.55] (-1,3) to (1,0);
	\node at (-1,6.35) {\scriptsize $k\! +\! l$};
	\node at (-3,-.35) {\scriptsize $k$};
	\node at (1,-.35) {\scriptsize $l$};
\end{tikzpicture}
};
\endxy
=\frac{1}{k! \, l! \, (k+l)!}
\xy
(0,0)*{
\begin{tikzpicture}[color=\clr, scale=.5]
	\draw [ thick, directed=1] (-1,5.5) to (-1,6);
	\draw [ thick, directed=.65] (-1,3) to (-1,3.5);
	\draw [ thick, directed=.35] (-1,3.5) to [out=150,in=210] (-1,5.5);
	\draw [ thick, directed=.35] (-1,3.5) to [out=30,in=330] (-1,5.5);
	\draw [ thick, rdirected=.65] (-1,3) to (-1.45,2.333);
	\draw [ thick, rdirected=.55] (-2.55,.67) to (-3,0);
	\draw [ thick, directed=.75] (-2.55,.67) to [out=116.3,in=176.3] (-1.45,2.333);
	\draw [ thick, directed=.75] (-2.55,.67) to [out=-3.7,in=296.3] (-1.45,2.333);
	\draw [ thick, rdirected=.65] (-1,3) to (-.55,2.333);
	\draw [ thick, rdirected=.55] (0.55,.67) to (1,0);
	\draw [ thick, directed=.75] (0.55,.67) to [out=187.7,in=243.7] (-.55,2.333);
	\draw [ thick, directed=.75] (0.55,.67) to [out=63.7,in=363.7] (-.55,2.333);
	\draw [dashed] (-3,0.85) rectangle (1,5);
	\node at (-1,6.35) {\scriptsize $k\! +\! l$};
	\node at (-3,-.35) {\scriptsize $k$};
	\node at (1,-.35) {\scriptsize $l$};
	\node at (-1.85,4.5) {\scriptsize $1$};
	\node at (-0.15,4.5) {\scriptsize $1$};
	\node at (-0.95,4.5) {\small $\cdots$};
	\node at (0.05,1.55) { \rotatebox{33.7}{\small $\cdots$}};
	\node at (-0.7,1.15) {\scriptsize $1$};
	\node at (0.7,1.85) {\scriptsize $1$};
	\node at (-2,1.45) { \rotatebox{-33.7}{\small $\cdots$}};
	\node at (-1.3,1.15) {\scriptsize $1$};
	\node at (-2.7,1.85) {\scriptsize $1$};
\end{tikzpicture}
};
\endxy.
\]
By associativity, the web enclosed by the dashed rectangle above is $(k+l)!\,Cl_{k+l}$, which, after finitely many iterations of the recursion in Lemma \ref{clasp-recursion}, can be written in the desired form. Finally, repeating this process for every split in $u$ finishes the proof.
\end{proof}

We will show later that $\xi$ is an isomorphism (Corollary \ref{xi-iso}). As such we will abuse notation and denote $\xi(w)$ simply as $w$ for $w\in\H$. The same goes for $w\in\H_c(k)$, $k<r$, which can be mapped into $\a_\omega\W\a_\omega$ by precomposing $\xi$ with the canonical embedding $\H_c(k)\into\H_c(r)$. Further, we will refer to the images under $\xi$ of elements of the standard basis of $\H$ as \emph{Sergeev diagrams}. An example of a Sergeev diagram for $r=6$ is
\[
c_1c_3c_5s_1s_4s_3s_5s_4s_2s_3=
\xy
(0,0)*{
\bt[scale=0.5, color=\clr]
	\draw [thick, directed=1] (0,0) to (1,1.5) to (1,2);
	\draw [thick,directed=1] (1,0) to (4,1.5) to (4,2);
	\draw [thick,directed=1] (2,0) to (5,1.5) to (5,2);
	\draw [thick,directed=1] (3,0) to (0,1.5) to (0,2);
	\draw [thick,directed=1] (4,0) to (2,1.5) to (2,2);
	\draw [thick,directed=1] (5,0) to (3,1.5) to (3,2);
	\draw (0,1.5) \wdot;
	\draw (2,1.5) \wdot;
	\draw (4,1.5) \wdot;
	\node at (0,-0.4) {\ss $1$};
	\node at (1,-0.4) {\ss $1$};
	\node at (2,-0.4) {\ss $1$};
	\node at (3,-0.4) {\ss $1$};
	\node at (4,-0.4) {\ss $1$};
	\node at (5,-0.4) {\ss $1$};
	\node at (0,2.4) {\ss $1$};
	\node at (1,2.4) {\ss $1$};
	\node at (2,2.4) {\ss $1$};
	\node at (3,2.4) {\ss $1$};
	\node at (4,2.4) {\ss $1$};
	\node at (5,2.4) {\ss $1$};
\et
};
\endxy.
\]

\begin{lemma}\label{clasp-sum}
The following local relations hold in $\W$.
\bi
\item[(a)] For $0<k\leq r$, 
\beq\label{clasp-sum}
Cl_k=\frac{1}{k!}\sum_{\sigma\in\frakS_k}\sigma.
\eeq
\item[(b)] For $0<k\leq r$ and $\sigma\in\frakS_k$,
\beq\label{untangle}
\xy
(0,0)*{
\begin{tikzpicture}[scale=.35, color=\clr]
	\draw [ thick, ] (0,-5) to (0,-4);
	\draw [ thick, ] (2,-5) to (2,-4);
	\draw [thick, directed=0.75] (0,-2) to [out=90, in=210] (1,-0.5);
	\draw [thick, directed=0.75] (2,-2) to [out=90, in=330] (1,-0.5);
	\draw [thick, directed=1] (1,-0.5) to (1,0.5);
	\draw [ thick] (-0.3,-4) rectangle (2.3,-2);
	\node at (1,-3) {$\sigma$};
	\node at (1,-1.65) { \,$\cdots$};
	\node at (1,-4.6) { \,$\cdots$};
	\node at (0,-5.5) {\ss $1$};
	\node at (2,-5.5) {\ss $1$};
	\node at (1,1) {\fs $k$};
\end{tikzpicture}
};
\endxy=
\xy
(0,0)*{
\begin{tikzpicture}[scale=.35, color=\clr]
	\draw [thick, directed=0.75] (0,-2) to [out=90, in=210] (1,-0.5);
	\draw [thick, directed=0.75] (2,-2) to [out=90, in=330] (1,-0.5);
	\draw [thick, directed=1] (1,-0.5) to (1,0.5);
	\node at (1,-1.65) { \,$\cdots$};
	\node at (0,-2.5) {\ss $1$};
	\node at (2,-2.5) {\ss $1$};
	\node at (1,1) {\fs $k$};
\end{tikzpicture}
};
\endxy,\quad\quad
\xy
(0,0)*{\rotatebox{180}{
\begin{tikzpicture}[scale=.35, color=\clr]
	\draw [ thick, directed=1] (0,-4) to (0,-5);
	\draw [ thick, directed=1] (2,-4) to (2,-5);
	\draw [thick, ] (0,-2) to [out=90, in=210] (1,-0.5);
	\draw [thick, ] (2,-2) to [out=90, in=330] (1,-0.5);
	\draw [thick, rdirected=0.55] (1,-0.5) to (1,0.5);
	\draw [ thick] (-0.3,-4) rectangle (2.3,-2);
	\node at (1,-3) {\rotatebox{180}{$\sigma$}};
	\node at (1,-1.65) { \,$\cdots$};
	\node at (1,-4.4) { \,$\cdots$};
	\node at (0,-5.5) {\rotatebox{180}{\ss $1$}};
	\node at (2,-5.5) {\rotatebox{180}{\ss $1$}};
	\node at (1,1) {\rotatebox{180}{\fs $k$}};
\end{tikzpicture}
}};
\endxy=
\xy
(0,0)*{\rotatebox{180}{
\begin{tikzpicture}[scale=.35, color=\clr]
	\draw [thick, directed=1] (1,-0.5) to [out=210, in=90] (0,-2);
	\draw [thick, directed=1] (1,-0.5) to [out=330, in=90] (2,-2);
	\draw [thick, rdirected=0.55] (1,-0.5) to (1,0.5);
	\node at (1,-1.5) { \,$\cdots$};
	\node at (0,-2.5) {\rotatebox{180}{\ss $1$}};
	\node at (2,-2.5) {\rotatebox{180}{\ss $1$}};
	\node at (1,1) {\rotatebox{180}{\fs $k$}};
\end{tikzpicture}
}};
\endxy.
\eeq
\item[(c)] For $k,l>0$,
\beq\label{merges-past-crossings}
\xy
(0,0)*{
\bt[scale=.35, color=\clr]
	\draw [ thick, directed=1] (0, .75) to (0,1.5) to (2,3.5);
	\draw [ thick, directed=.65] (1,-1) to [out=90,in=330] (0,.75);
	\draw [ thick, directed=.65] (-1,-1) to [out=90,in=210] (0,.75);
	\draw [ thick, directed=1] (2,-1) to (2,1.5) to (0,3.5);
	\node at (2, 4) {\scriptsize $k\! +\! l$};
	\node at (-1,-1.5) {\scriptsize $k$};
	\node at (1,-1.5) {\scriptsize $l$};
	\node at (2,-1.5) {\scriptsize $1$};
	\node at (0,4) {\scriptsize $1$};
\et
};
\endxy=
\xy
(0,0)*{
\bt[scale=.35, color=\clr]
	\draw [ thick, directed=1] (0, .75) to (0,1.5);
	\draw [ thick, directed=.65] (1,-1) to [out=90,in=330] (0,.75);
	\draw [ thick, directed=.65] (-1,-1) to [out=90,in=210] (0,.75);
	\draw [ thick, ] (-3,-3) to (-1,-1);
	\draw [ thick, ] (-1,-3) to (1,-1);
	\draw [ thick, directed=1] (1,-3) to (-2,-1) to (-2,1.5);
	\node at (0, 2) {\scriptsize $k\! +\! l$};
	\node at (-1,-3.5) {\scriptsize $l$};
	\node at (-3,-3.5) {\scriptsize $k$};
	\node at (-2,2) {\scriptsize $1$};
	\node at (1,-3.5) {\scriptsize $1$};
\et
};
\endxy,\quad\quad
\xy
(0,0)*{\rotatebox{180}{
\bt[scale=.35, color=\clr]
	\draw [ thick, rdirected=.6] (0, .75) to (0,1.5);
	\draw [ thick, ] (1,-1) to [out=90,in=330] (0,.75);
	\draw [ thick, ] (-1,-1) to [out=90,in=210] (0,.75);
	\draw [ thick, rdirected=0.1] (-3,-3) to (-1,-1);
	\draw [ thick, rdirected=0.1] (-1,-3) to (1,-1);
	\draw [ thick, directed=1] (-2,1.5) to (-2,-1) to (1,-3);
	\node at (0, 2) {\rotatebox{180}{\scriptsize $k\! +\! l$}};
	\node at (-1,-3.5) {\rotatebox{180}{\scriptsize $k$}};
	\node at (-3,-3.5) {\rotatebox{180}{\scriptsize $l$}};
	\node at (-2,2) {\rotatebox{180}{\scriptsize $1$}};
	\node at (1,-3.5) {\rotatebox{180}{\scriptsize $1$}};
\et
}};
\endxy=
\xy
(0,0)*{\rotatebox{180}{
\bt[scale=.35, color=\clr]
	\draw [ thick, rdirected=.15] (0, .75) to (0,1.5) to (2,3.5);
	\draw [ thick, rdirected=.1] (1,-1) to [out=90,in=330] (0,.75);
	\draw [ thick, rdirected=.1] (-1,-1) to [out=90,in=210] (0,.75);
	\draw [ thick, directed=1] (0,3.5) to (2,1.5) to (2,-1);
	\node at (2, 4) {\rotatebox{180}{\scriptsize $k\! +\! l$}};
	\node at (-1,-1.5) {\rotatebox{180}{\scriptsize $l$}};
	\node at (1,-1.5) {\rotatebox{180}{\scriptsize $k$}};
	\node at (2,-1.5) {\rotatebox{180}{\scriptsize $1$}};
	\node at (0,4) {\rotatebox{180}{\scriptsize $1$}};
\et
}};
\endxy,
\eeq
plus the local relations obtained by reflecting both of the above about a vertical axis.
\ei
\end{lemma}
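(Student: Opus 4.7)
For part (a), proceed by induction on $k$. The base case $k=1$ is immediate since $Cl_1 = \text{id}$ and $\frakS_1 = \{e\}$. For $k \geq 2$, apply \cref{clasp-recursion} to write $Cl_k$ as a scalar combination $\tfrac{k-1}{k}A - \tfrac{k-2}{k}B$ of two webs: $A$, sandwiching a ``rung'' $X_{k-1}$ (a merge-then-split at positions $k-1, k$) between two copies of $Cl_{k-1}\otimes\text{id}_1$, and $B = Cl_{k-1}\otimes\text{id}_1$. Setting $P_j := \sum_{\tau\in\frakS_j}\tau$ (viewed as an element of the image of $\xi$), the inductive hypothesis gives $Cl_{k-1} = \tfrac{1}{(k-1)!}P_{k-1}$, and by \cref{crossing-def} the rung satisfies $X_{k-1} = s_{k-1} + \text{id}$. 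Thus $A = \tfrac{1}{((k-1)!)^2}\bigl(P_{k-1}s_{k-1}P_{k-1} + P_{k-1}^2\bigr)$. A standard double-coset calculation in $\frakS_k$---the nontrivial $(\frakS_{k-1},\frakS_{k-1})$-double coset has stabilizer $\frakS_{k-2}$, hence cardinality $(k-1)(k-1)!$, and each element appears with multiplicity $(k-2)!$ in $P_{k-1}s_{k-1}P_{k-1}$---yields $P_{k-1}s_{k-1}P_{k-1} = (k-2)!(P_k - P_{k-1})$. Combined with $P_{k-1}^2 = (k-1)!P_{k-1}$ and substituted back, the $P_{k-1}$ contributions cancel, leaving $Cl_k = \tfrac{1}{k!}P_k$.

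For part (b), the key observation is that the merge of $k$ ones into a $k$-strand absorbs adjacent crossings. Precomposing $\text{merge}(k)$ with $X_i$ (the local merge-then-split at positions $i, i+1$) re-associates into a digon of two $1$-strands nested inside the larger merge; by \cref{digon-removal} with $k=l=1$ this digon equals $\binom{2}{1}=2$, so $\text{merge}(k)\circ X_i = 2\,\text{merge}(k)$. Since \cref{crossing-def} gives $s_i = X_i - \text{id}$, we get $\text{merge}(k)\circ s_i = \text{merge}(k)$, and induction on the length of a reduced expression in $\frakS_k$ yields $\text{merge}(k)\circ\sigma = \text{merge}(k)$ for all $\sigma\in\frakS_k$. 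This gives the first equation of (b); the second is the vertically reflected, orientation-reversed form and follows by the same argument applied to the dual split.

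For part (c), expand the left-hand side via \cref{crossing-def}:
\[
\tfrac{1}{(k+l)!}\bigl[\text{id}_1\otimes\text{merge}(k+l)\bigr]\circ\bigl[\text{braid}\bigr]\circ\bigl[\text{explode}(k+l)\otimes\text{id}_1\bigr]\circ\bigl[\text{merge}(k,l)\otimes\text{id}_1\bigr],
\]
where ``braid'' denotes the pattern of the extra $1$-strand sweeping past each of the $k+l$ ones. Re-associating $\text{explode}(k+l)$ through an intermediate split $(k+l)\to(k,l)$ and applying \cref{digon-removal} to the resulting split-merge digon collapses $\text{explode}(k+l)\circ\text{merge}(k,l)$ into $\binom{k+l}{k}(\text{explode}(k)\otimes\text{explode}(l))$. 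The braid itself factors via the superinterchange law (all generators involved are even) into a ``sweep through $k$ ones'' on the left tensored with $\text{id}_l$, composed with $\text{id}_k$ tensored with a ``sweep through $l$ ones''. Using the further associativity $\text{merge}(k+l) = \text{merge}(k,l)\circ[\text{merge}(k)\otimes\text{merge}(l)]$, the resulting expression matches exactly the expansion of the right-hand side obtained by applying \cref{crossing-def} to $\text{cross}(k,1)$ and $\text{cross}(l,1)$ separately. The reflected identities follow by the same argument on the reflected webs.

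The main obstacle is the arithmetic in (a): the double-coset multiplicities must combine precisely with the scalars of \cref{clasp-recursion} so that the $P_{k-1}$ contributions exactly cancel. Parts (b) and (c) are comparatively routine, reducing to re-association, digon removal, and the superinterchange law once (a) is in hand.
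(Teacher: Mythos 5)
Your parts (a) and (b) are sound. For (a) the paper leaves the induction to the reader, and your route through \cref{clasp-recursion} together with the double-coset identity $P_{k-1}s_{k-1}P_{k-1}=(k-2)!\,(P_k-P_{k-1})$ (with $P_j:=\sum_{\tau\in\frakS_j}\tau$) is a valid way to carry it out; the scalars do cancel exactly as you claim. Part (b) is essentially the paper's own argument: your computation $\operatorname{merge}\circ(s_i+\operatorname{id})=2\operatorname{merge}$ via \cref{digon-removal} is precisely the paper's relation \eqref{untwist-crossing}, followed by associativity and induction on word length.

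Part (c), however, contains a genuine error. In the composite $\operatorname{explode}(k+l)\circ\operatorname{merge}(k,l)$ the merge sits \emph{below} the split, so after re-association you obtain a ``dumbbell'' (two strands merging into a $(k+l)$-strand which then splits back), not a digon (a strand splitting and re-merging); relation \eqref{digon-removal} removes only the latter. Your claimed collapse $\operatorname{explode}(k+l)\circ\operatorname{merge}(k,l)=\binom{k+l}{k}\bigl(\operatorname{explode}(k)\otimes\operatorname{explode}(l)\bigr)$ already fails for $k=l=1$: by \cref{crossing-def} the dumbbell equals $s_1+\operatorname{id}$, which is not $2\operatorname{id}$, as these are linearly independent in $\a_\omega\W\a_\omega\simeq\H$ (\cref{xi-iso}). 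The dumbbell is not a scalar: fully exploded, the merge-then-split of $k+l$ one-strands equals $(k+l)!\,Cl_{k+l}=\sum_{\sigma\in\frakS_{k+l}}\sigma$ by \eqref{complete-explosion} and your part (a). That symmetrizer is exactly what the paper's proof must confront --- it slides each $\sigma$ through the crossings with the extra $1$-strand using \eqref{braid-relation} and then absorbs it into the top merges using part (b), i.e.\ \eqref{untangle}. So part (c) is not independent of (a) and (b); it needs both. Your shortcut happens to produce the correct normalizing scalar $\binom{k+l}{k}$, but it silently discards the permutations that the symmetrizer introduces, and the argument does not go through as written.
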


\begin{proof}
The proof of (a) is by induction on $k$ and completely straightforward, so left to the reader.

To prove (b), we first compute that
\beq\label{untwist-crossing}
\xy
(0,0)*{
\bt[scale=.35, color=\clr]
	\draw [ thick, directed=1] (0,2.75) to (0,3.5);
	\draw [ thick, directed=0.75] (1,1.75) to [out=90,in=330] (0,2.75);
	\draw [ thick, directed=0.75] (-1,1.75) to [out=90,in=210] (0,2.75);
	\draw [thick] (-1,-1) to (1,1.75);
	\draw [thick] (1,-1) to (-1,1.75);
	\node at (0,3.95) {\scriptsize $2$};
	\node at (-1,-1.45) {\scriptsize $1$};
	\node at (1,-1.45) {\scriptsize $1$};
\et
};
\endxy=
\xy
(0,0)*{
\begin{tikzpicture}[scale=.35, color=\clr]
	\draw [ thick, ] (0,0) to (0,.75);
	\draw [ thick, ] (0,.75) to [out=30,in=270] (1,1.75);
	\draw [ thick, ] (0,.75) to [out=150,in=270] (-1,1.75); 
	\draw [ thick, directed=0.75] (1,-1) to [out=90,in=330] (0,0);
	\draw [ thick, directed=0.75] (-1,-1) to [out=90,in=210] (0,0);
	\draw [ thick, directed=1] (0,2.75) to (0,3.5);
	\draw [ thick, directed=0.75] (1,1.75) to [out=90,in=330] (0,2.75);
	\draw [ thick, directed=0.75] (-1,1.75) to [out=90,in=210] (0,2.75);
	\node at (-1.5,1.75) {\scriptsize $1$};
	\node at (1.5,1.75) {\scriptsize $1$};
	\node at (-1,-1.45) {\scriptsize $1$};
	\node at (1,-1.45) {\scriptsize $1$};
	\node at (0,3.95) {\scriptsize $2$};
\end{tikzpicture}
};
\endxy-
\xy
(0,0)*{
\bt[scale=.35, color=\clr]
	\draw [ thick, directed=1] (0,2.75) to (0,3.5);
	\draw [ thick, directed=0.75] (1,1.75) to [out=90,in=330] (0,2.75);
	\draw [ thick, directed=0.75] (-1,1.75) to [out=90,in=210] (0,2.75);
	\draw [thick] (-1,-1) to (-1,1.75);
	\draw [thick] (1,-1) to (1,1.75);
	\node at (0,3.95) {\scriptsize $2$};
	\node at (-1,-1.45) {\scriptsize $1$};
	\node at (1,-1.45) {\scriptsize $1$};
\et
};
\endxy\stackrel{\eqref{digon-removal}}{=}(2)
\xy
(0,0)*{
\bt[scale=.35, color=\clr]
	\draw [ thick, directed=1] (4,9) to (4,10);
	\draw [ thick, directed=0.75] (5,7.25) to [out=90,in=330] (4,9);
	\draw [ thick, directed=0.75] (3,7.25) to [out=90,in=210] (4,9);
	\node at (4,10.5) {\scriptsize $2$};
	\node at (3,6.75) {\scriptsize $1$};
	\node at (5,6.75) {\scriptsize $1$};
\et
};
\endxy-
\xy
(0,0)*{
\bt[scale=.35, color=\clr]
	\draw [ thick, directed=1] (4,9) to (4,10);
	\draw [ thick, directed=0.75] (5,7.25) to [out=90,in=330] (4,9);
	\draw [ thick, directed=0.75] (3,7.25) to [out=90,in=210] (4,9);
	\node at (4,10.5) {\scriptsize $2$};
	\node at (3,6.75) {\scriptsize $1$};
	\node at (5,6.75) {\scriptsize $1$};
\et
};
\endxy=
\xy
(0,0)*{
\bt[scale=.35, color=\clr]
	\draw [ thick, directed=1] (4,9) to (4,10);
	\draw [ thick, directed=0.75] (5,7.25) to [out=90,in=330] (4,9);
	\draw [ thick, directed=0.75] (3,7.25) to [out=90,in=210] (4,9);
	\node at (4,10.5) {\scriptsize $2$};
	\node at (3,6.75) {\scriptsize $1$};
	\node at (5,6.75) {\scriptsize $1$};
\et
};
\endxy \ .
\eeq
This combined with associativity obtains the relation on the left; for example, if $\sigma=s_2s_1\in\frakS_3$ then we have
\[
\xy
(0,0)*{
\bt[scale=.35, color=\clr]
	\draw [ thick, directed=1] (4,9) to (4,10);
	\draw [ thick, directed=0.45] (6,7.25) to [out=90,in=330] (4,9);
	\draw [ thick, directed=0.75] (2,7.25) to [out=90,in=210] (4,9);
	\draw [ thick, directed=0.75] (4,7.25) to [out=90, in=210] (5,8.5);
	\draw [thick] (4,5) to (2,7.25);
	\draw [thick] (2,5) to (6,7.25);
	\draw [thick] (6,5) to (4,7.25);
	\node at (4,10.5) {\scriptsize $3$};
	\node at (2,4.5) {\scriptsize $1$};
	\node at (4,4.5) {\scriptsize $1$};
	\node at (6,4.5) {\scriptsize $1$};
\et
};
\endxy\stackrel{\eqref{untwist-crossing}}{=}
\xy
(0,0)*{
\bt[scale=.35, color=\clr]
	\draw [ thick, directed=1] (4,9) to (4,10);
	\draw [ thick, directed=0.45] (6,7.25) to [out=90,in=330] (4,9);
	\draw [ thick, directed=0.75] (2,7.25) to [out=90,in=210] (4,9);
	\draw [ thick, directed=0.75] (4,7.25) to [out=90, in=210] (5,8.5);
	\draw [thick] (4,5) to (2,7.25);
	\draw [thick] (2,5) to (4,7.25);
	\draw [thick] (6,5) to (6,7.25);
	\node at (4,10.5) {\scriptsize $3$};
	\node at (2,4.5) {\scriptsize $1$};
	\node at (4,4.5) {\scriptsize $1$};
	\node at (6,4.5) {\scriptsize $1$};
\et
};
\endxy\stackrel{\eqref{associativity}}{=}
\xy
(0,0)*{
\bt[scale=.35, color=\clr]
	\draw [ thick, directed=1] (4,9) to (4,10);
	\draw [ thick, directed=0.75] (6,7.25) to [out=90,in=330] (4,9);
	\draw [ thick, directed=0.45] (2,7.25) to [out=90,in=210] (4,9);
	\draw [ thick, directed=0.75] (4,7.25) to [out=90, in=330] (3,8.5);
	\draw [thick] (4,5) to (2,7.25);
	\draw [thick] (2,5) to (4,7.25);
	\draw [thick] (6,5) to (6,7.25);
	\node at (4,10.5) {\scriptsize $3$};
	\node at (2,4.5) {\scriptsize $1$};
	\node at (4,4.5) {\scriptsize $1$};
	\node at (6,4.5) {\scriptsize $1$};
\et
};
\endxy\stackrel{\eqref{untwist-crossing}}{=}
\xy
(0,0)*{
\bt[scale=.35, color=\clr]
	\draw [ thick, directed=1] (4,9) to (4,10);
	\draw [ thick, directed=0.75] (6,6.75) to (6,7.25) to [out=90,in=330] (4,9);
	\draw [ thick, directed=0.5] (2,6.75) to (2,7.25) to [out=90,in=210] (4,9);
	\draw [ thick, directed=0.75] (4,6.75) to (4,7.25) to [out=90, in=330] (3,8.5);
	\node at (4,10.5) {\scriptsize $3$};
	\node at (2,6.25) {\scriptsize $1$};
	\node at (4,6.25) {\scriptsize $1$};
	\node at (6,6.25) {\scriptsize $1$};
\et
};
\endxy.
\]
The proof of the relation on the right is similar.

For (c), we compute that
\[
\xy
(0,0)*{
\bt[scale=.35, color=\clr]
	\draw [ thick, directed=1] (0, .75) to (0,1.5) to (2,3.5);
	\draw [ thick, directed=.65] (1,-1) to [out=90,in=330] (0,.75);
	\draw [ thick, directed=.65] (-1,-1) to [out=90,in=210] (0,.75);
	\draw [ thick, directed=1] (2,-1) to (2,1.5) to (0,3.5);
	\node at (2, 4) {\scriptsize $k\! +\! l$};
	\node at (-1,-1.5) {\scriptsize $k$};
	\node at (1,-1.5) {\scriptsize $l$};
	\node at (2,-1.5) {\scriptsize $1$};
	\node at (0,4) {\scriptsize $1$};
\et
};
\endxy\stackrel{\eqref{complete-explosion}}{=} \frac{1}{k!\,l!(k+l)!}
\xy
(0,0)*{
\begin{tikzpicture}[scale=.3, color=\clr]
	\draw [ thick, directed=.85] (0,-2.75) to [out=30,in=330] (0,.75);
	\draw [ thick, directed=.85] (0,-2.75) to [out=150,in=210] (0,.75);
	\draw [ thick, directed=.65] (0,-4) to (0,-2.75);
	\node at (0,-4.5) {\scriptsize $k$};
	\node at (-1.5,-1) {\scriptsize $1$};
	\node at (1.5,-1) {\scriptsize $1$};
	\node at (0,-1) { \ $\cdots$};
	\draw [ thick, directed=.85] (4.5,-2.75) to [out=30,in=330] (4.5,.75);
	\draw [ thick, directed=.85] (4.5,-2.75) to [out=150,in=210] (4.5,.75);
	\draw [ thick, directed=.65] (4.5,-4) to (4.5,-2.75);
	\node at (4.5,-4.5) {\scriptsize $l$};
	\node at (3,-1) {\scriptsize $1$};
	\node at (6,-1) {\scriptsize $1$};
	\node at (4.5,-1) { \ $\cdots$};
	\draw [ thick, directed=0.75, looseness=0.75] (0,0.75) to [out=90,in=210] (2.25,3);
	\draw [ thick, directed=0.75, looseness=0.75] (4.5,0.75) to [out=90,in=330] (2.25,3);
	\draw [ thick, directed=0.6] (2.25,3) to (2.25,5);
	\draw [ thick, looseness=0.75] (2.25,5) to [out=150,in=270] (0,7.25);
	\draw [ thick, looseness=0.75] (2.25,5) to [out=30,in=270] (4.5,7.25);
	\node at (2.25,6.5) {\,$\cdot \ \cdot \ \cdot$};
	\node at (-0.5,6.75) {\scriptsize $1$};
	\node at (5,6.75) {\scriptsize $1$};
	\node at (0,2.5) {\scriptsize $k$};
	\node at (4.5,2.5) {\scriptsize $l$};
	\node at (1,4) {\scriptsize $k\!+\!l$ \ };
	\draw [ thick, directed=1] (7,-4) to (7,7.25) to (2.25,11) to (2.25,14.5);
	\node at (2.25,15) {\scriptsize $1$};
	\node at (7,-4.5) {\scriptsize $1$};
	\draw [ thick, ] (0,7.25) to (4.5,11);
	\draw [ thick, ] (4.5,7.25) to (9,11);
	\draw [ thick, directed=0.75, looseness=0.75] (4.5,11) to [out=90,in=210] (6.75,13.25);
	\draw [ thick, directed=0.75, looseness=0.75] (9,11) to [out=90,in=330] (6.75,13.25);
	\node at (6.75,11.5) {$\cdot \ \cdot \ \cdot$};
	\node at (4,11.5) {\scriptsize $1$};
	\node at (9.5,11.5) {\scriptsize $1$};
	\draw [thick, directed=1] (6.75,13.25) to (6.75,14.5);
	\node at (6.75,15) {\scriptsize $k\!+\!l$};
\end{tikzpicture}
};
\endxy
\stackrel{\eqref{clasp-sum}}{=} \frac{1}{k!\,l!(k+l)!}\sum_{\sigma\in\frakS_{k+l}}
\xy
(0,0)*{
\begin{tikzpicture}[scale=.3, color=\clr]
	\draw [ thick, ] (0,-2.75) to [out=30,in=270] (1,-1);
	\draw [ thick, ] (0,-2.75) to [out=150,in=270] (-1,-1);
	\draw [ thick, directed=.65] (0,-4) to (0,-2.75);
	\node at (0,-4.5) {\scriptsize $k$};
	\node at (-1.5,-1.5) {\scriptsize $1$};
	\node at (1.5,-1.5) {\scriptsize $1$};
	\node at (0,-1.5) { \ $\cdots$};
	\draw [ thick, ] (4.5,-2.75) to [out=30,in=270] (5.5,-1);
	\draw [ thick, ] (4.5,-2.75) to [out=150,in=270] (3.5,-1);
	\draw [ thick, directed=.65] (4.5,-4) to (4.5,-2.75);
	\node at (4.5,-4.5) {\scriptsize $l$};
	\node at (3,-1.5) {\scriptsize $1$};
	\node at (6,-1.5) {\scriptsize $1$};
	\node at (4.5,-1.5) { \ $\cdots$};
	\draw [ thick ] (-1.5,-1) rectangle (6,6);
	\node at (2.25,2.5) {$\sigma$};
	\node at (2.25,6.5) {\,$\cdot \ \cdot \ \cdot$};
	\draw [ thick ] (0,6) to (0,7.25);
	\draw [ thick ] (4.5,6) to (4.5,7.25);
	\node at (-0.5,6.75) {\scriptsize $1$};
	\node at (5,6.75) {\scriptsize $1$};
	\draw [ thick, directed=1] (7,-4) to (7,7.25) to (2.25,11) to (2.25,14.5);
	\node at (2.25,15) {\scriptsize $1$};
	\node at (7,-4.5) {\scriptsize $1$};
	\draw [ thick, ] (0,7.25) to (4.5,11);
	\draw [ thick, ] (4.5,7.25) to (9,11);
	\draw [ thick, directed=0.75, looseness=0.75] (4.5,11) to [out=90,in=210] (6.75,13.25);
	\draw [ thick, directed=0.75, looseness=0.75] (9,11) to [out=90,in=330] (6.75,13.25);
	\node at (6.75,11.5) {$\cdot \ \cdot \ \cdot$};
	\node at (4,11.5) {\scriptsize $1$};
	\node at (9.5,11.5) {\scriptsize $1$};
	\draw [thick, directed=1] (6.75,13.25) to (6.75,14.5);
	\node at (6.75,15) {\scriptsize $k\!+\!l$};
\end{tikzpicture}
};
\endxy
\]
\bea
&\stackrel{\eqref{braid-relation}}{=}& \frac{1}{k!\,l!(k+l)!}\sum_{\sigma\in\frakS_{k+l}}
\xy
(0,0)*{
\begin{tikzpicture}[scale=.3, color=\clr]
	\draw [ thick, ] (0,-2.75) to [out=30,in=270] (1,-1);
	\draw [ thick, ] (0,-2.75) to [out=150,in=270] (-1,-1);
	\draw [ thick, directed=.65] (0,-4) to (0,-2.75);
	\node at (0,-4.5) {\scriptsize $k$};
	\node at (-1.5,-1.5) {\scriptsize $1$};
	\node at (1.5,-1.5) {\scriptsize $1$};
	\node at (0,-1.5) { \ $\cdots$};
	\draw [ thick, ] (4.5,-2.75) to [out=30,in=270] (5.5,-1);
	\draw [ thick, ] (4.5,-2.75) to [out=150,in=270] (3.5,-1);
	\draw [ thick, directed=.65] (4.5,-4) to (4.5,-2.75);
	\node at (4.5,-4.5) {\scriptsize $l$};
	\node at (3,-1.5) {\scriptsize $1$};
	\node at (6,-1.5) {\scriptsize $1$};
	\node at (4.5,-1.5) { \ $\cdots$};
	\draw [ thick ] (3,11) rectangle (10.5,4);
	\node at (7,7.5) {$\sigma$};
	\node at (4.75,3.25) {$\cdots$\,};
	\node at (9.25,3.25) {$\cdots$\,};
	\draw [ thick ] (3.5,2.75) to (3.5,4);
	\draw [ thick ] (10,2.75) to (10,4);
	\draw [ thick ] (5.5,2.75) to (5.5,4);
	\draw [ thick ] (8,2.75) to (8,4);
	\node at (3,3.5) {\scriptsize $1$};
	\node at (10.5,3.5) {\scriptsize $1$};
	\node at (6,3.5) {\scriptsize $1$};
	\node at (7.5,3.5) {\scriptsize $1$};
	\draw [ thick, directed=1] (7.25,-4) to (7.25,-1) to (2,2.75) to (2,14.5);
	\node at (2,15) {\scriptsize $1$};
	\node at (7.25,-4.5) {\scriptsize $1$};
	\draw [ thick, ] (-1,-1) to (3.5,2.75);
	\draw [ thick, ] (1,-1) to (5.5,2.75);
	\draw [ thick, ] (3.5,-1) to (8,2.75);
	\draw [ thick, ] (5.5,-1) to (10,2.75);
	\draw [ thick, directed=0.75, looseness=0.75] (4.5,11) to [out=90,in=210] (6.75,13.25);
	\draw [ thick, directed=0.75, looseness=0.75] (9,11) to [out=90,in=330] (6.75,13.25);
	\node at (6.75,11.5) {$\cdot \ \cdot \ \cdot$};
	\node at (4,11.5) {\scriptsize $1$};
	\node at (9.5,11.5) {\scriptsize $1$};
	\draw [thick, directed=1] (6.75,13.25) to (6.75,14.5);
	\node at (6.75,15) {\scriptsize $k\!+\!l$};
\end{tikzpicture}
};
\endxy
\stackrel{\eqref{untangle}}{=} \frac{1}{k!\,l!(k+l)!}\sum_{\sigma\in\frakS_{k+l}}
\xy
(0,0)*{
\begin{tikzpicture}[scale=.3, color=\clr]
	\draw [ thick, ] (0,-2.75) to [out=30,in=270] (1,-1);
	\draw [ thick, ] (0,-2.75) to [out=150,in=270] (-1,-1);
	\draw [ thick, directed=.65] (0,-4) to (0,-2.75);
	\node at (0,-4.5) {\scriptsize $k$};
	\node at (-1.5,-1.5) {\scriptsize $1$};
	\node at (1.5,-1.5) {\scriptsize $1$};
	\node at (0,-1.5) { \ $\cdots$};
	\draw [ thick, ] (4.5,-2.75) to [out=30,in=270] (5.5,-1);
	\draw [ thick, ] (4.5,-2.75) to [out=150,in=270] (3.5,-1);
	\draw [ thick, directed=.65] (4.5,-4) to (4.5,-2.75);
	\node at (4.5,-4.5) {\scriptsize $l$};
	\node at (3,-1.5) {\scriptsize $1$};
	\node at (6,-1.5) {\scriptsize $1$};
	\node at (4.5,-1.5) { \ $\cdots$};
	\node at (4.75,3.25) {$\cdots$\,};
	\node at (9.25,3.25) {$\cdots$\,};
	\draw [ thick ] (3.5,2.75) to (3.5,3.25);
	\draw [ thick ] (10,2.75) to (10,3.25);
	\draw [ thick ] (5.5,2.75) to (5.5,3.25);
	\draw [ thick ] (8,2.75) to (8,3.25);
	\node at (3,3.5) {\scriptsize $1$};
	\node at (10.5,3.5) {\scriptsize $1$};
	\node at (6,3.5) {\scriptsize $1$};
	\node at (7.5,3.5) {\scriptsize $1$};
	\draw [ thick, directed=1] (7.25,-4) to (7.25,-1) to (2,2.75) to (2,7.25);
	\node at (2,7.75) {\scriptsize $1$};
	\node at (7.25,-4.5) {\scriptsize $1$};
	\draw [ thick, directed=0.75, ] (5.5,3.25) to [out=90,in=330] (4.5,4.5);
	\draw [ thick, directed=0.75, ] (8,3.25) to [out=90,in=210] (9,4.5);
	\node at (5,5.75) {\scriptsize $k$};
	\node at (8.5,5.75) {\scriptsize $l$};
	\draw [ thick, ] (-1,-1) to (3.5,2.75);
	\draw [ thick, ] (1,-1) to (5.5,2.75);
	\draw [ thick, ] (3.5,-1) to (8,2.75);
	\draw [ thick, ] (5.5,-1) to (10,2.75);
	\draw [ thick, directed=0.8, directed=0.3, looseness=0.75] (3.5,3.25) to [out=90,in=210] (6.75,5.5);
	\draw [ thick, directed=0.8, directed=0.3, looseness=0.75] (10,3.25) to [out=90,in=330] (6.75,5.5);
	\draw [thick, directed=1] (6.75,5.5) to (6.75,7.25);
	\node at (6.75,7.75) {\scriptsize $k\!+\!l$};
\end{tikzpicture}
};
\endxy\\
&=&
\xy
(0,0)*{
\bt[scale=.35, color=\clr]
	\draw [ thick, directed=1] (0, .75) to (0,1.5);
	\draw [ thick, directed=.65] (1,-1) to [out=90,in=330] (0,.75);
	\draw [ thick, directed=.65] (-1,-1) to [out=90,in=210] (0,.75);
	\draw [ thick, ] (-3,-3) to (-1,-1);
	\draw [ thick, ] (-1,-3) to (1,-1);
	\draw [ thick, directed=1] (1,-3) to (-2,-1) to (-2,1.5);
	\node at (0, 2) {\scriptsize $k\! +\! l$};
	\node at (-1,-3.5) {\scriptsize $k$};
	\node at (-3,-3.5) {\scriptsize $l$};
	\node at (-2,2) {\scriptsize $1$};
	\node at (1,-3.5) {\scriptsize $1$};
\et
};
\endxy \ .
\eea
The proofs of the other relations are similar.
\end{proof}


\section{Main theorems}



\subsection{Equivalence of webs and $\H$-morphisms}


We turn now to the main results of the paper. First, a lemma. For $\a=(\a_1,\dots,\a_m)\in\A(r)$, $m\geq 1$, let $\widehat{\a}\in\Lambda'(r)$ be given by $\widehat{\a}_i=\a_i$ for $1\leq i\leq m$ and $\widehat{\a}_i=0$ for $m+1\leq i\leq r.$ 

\begin{lemma}\label{pi-map}
There exists a locally unital homomorphism $\pi^n\colon\Sdot_c(n,r)\to\W$ given by
\[
1_\lambda\mapsto
\xy
(0,0)*{
\begin{tikzpicture}[color=\clr, scale=.35]
	\draw [thick, directed=1] (-4,-1) to (-4,2.5);
	\draw [thick, directed=1] (-1.5,-1) to (-1.5,2.5);
	\node at (-1.5,3.1) {\fs $\lambda_{n}$};
	\node at (-1.5,-1.5) {\fs $\lambda_{n}$};
	\node at (-4,3.1) {\fs $\lambda_1$};
	\node at (-4,-1.5) {\fs $\lambda_1$};
	\node at (-2.8,0.5) { \ $\cdots$};
\end{tikzpicture}
};
\endxy,\quad\quad e_i1_\lambda\mapsto
\xy
(0,0)*{
\bt[color=\clr, scale=.4]
	\draw [ thick, color=\clr, directed=1] (-6,-1.5) to (-6,2);
	\draw [ thick, color=\clr, directed=1] (-4,-1.5) to (-4,2);
	\draw [ thick, color=\clr, directed=1] (3.5,-1.5) to (3.5,2);
	\draw [ thick, color=\clr, directed=1] (5.5,-1.5) to (5.5,2);
	\draw [ thick, color=\clr, directed=0.3, directed=1] (-2,-1.5) to (-2,2);
	\draw [ thick, color=\clr, directed=1, directed=0.3] (1,-1.5) to (1,2);
	\draw [ thick, color=\clr, directed=0.6] (1,0.25) to (-2,0.25);
	\node at (-6,-2) {\fs $\lambda_1$};
	\node at (-4,-2) {\fs $\lambda_{i-1}$};
	\node at (3.5,-2) {\fs $\lambda_{i+2}$};
	\node at (5.5,-2) {\fs $\lambda_n$};
	\node at (-6,2.5) {\fs $\lambda_1$};
	\node at (-4,2.5) {\fs $\lambda_{i-1}$};
	\node at (3.5,2.5) {\fs $\lambda_{i+2}$};
	\node at (5.5,2.5) {\fs $\lambda_n$};
	\node at (-2,-2) {\fs $\lambda_i$};
	\node at (1,-2) {\fs $\lambda_{i+1}$};
	\node at (-2,2.5) {\fs $\lambda_i\! +\! 1$};
	\node at (1,2.5) {\fs $\lambda_{i+1}\! -\! 1$};
	\node at (-0.5,1) {\ss $1$ \ };
	\node at (-5,0.5) {\,$\cdots$};
	\node at (4.5,0.5) {\,$\cdots$};
\et
};
\endxy,
\]
\[
f_i1_\lambda\mapsto
\xy
(0,0)*{
\bt[color=\clr, scale=.4]
	\draw [ thick, color=\clr, directed=1] (-6,-1.5) to (-6,2);
	\draw [ thick, color=\clr, directed=1] (-4,-1.5) to (-4,2);
	\draw [ thick, color=\clr, directed=1] (3.5,-1.5) to (3.5,2);
	\draw [ thick, color=\clr, directed=1] (5.5,-1.5) to (5.5,2);
	\draw [ thick, color=\clr, directed=0.3, directed=1] (-2,-1.5) to (-2,2);
	\draw [ thick, color=\clr, directed=1, directed=0.3] (1,-1.5) to (1,2);
	\draw [ thick, color=\clr, directed=0.55] (-2,0.25) to (1,0.25);
	\node at (-6,-2) {\fs $\lambda_1$};
	\node at (-4,-2) {\fs $\lambda_{i-1}$};
	\node at (3.5,-2) {\fs $\lambda_{i+2}$};
	\node at (5.5,-2) {\fs $\lambda_n$};
	\node at (-6,2.5) {\fs $\lambda_1$};
	\node at (-4,2.5) {\fs $\lambda_{i-1}$};
	\node at (3.5,2.5) {\fs $\lambda_{i+2}$};
	\node at (5.5,2.5) {\fs $\lambda_n$};
	\node at (-2,-2) {\fs $\lambda_i$};
	\node at (1,-2) {\fs $\lambda_{i+1}$};
	\node at (-2,2.5) {\fs $\lambda_i\! -\! 1$};
	\node at (1,2.5) {\fs $\lambda_{i+1}\! +\! 1$};
	\node at (-0.5,1) {\ss $1$ \ };
	\node at (-5,0.5) {\,$\cdots$};
	\node at (4.5,0.5) {\,$\cdots$};
\et
};
\endxy,\quad\quad e_{\ol{i}}1_\lambda\mapsto
\xy
(0,0)*{
\bt[color=\clr, scale=.4]
	\draw [ thick, color=\clr, directed=1] (-6,-1.5) to (-6,2);
	\draw [ thick, color=\clr, directed=1] (-4,-1.5) to (-4,2);
	\draw [ thick, color=\clr, directed=1] (3.5,-1.5) to (3.5,2);
	\draw [ thick, color=\clr, directed=1] (5.5,-1.5) to (5.5,2);
	\draw [ thick, color=\clr, directed=0.3, directed=1] (-2,-1.5) to (-2,2);
	\draw [ thick, color=\clr, directed=1, directed=0.3] (1,-1.5) to (1,2);
	\draw [ thick, color=\clr, directed=0.75] (1,0.25) to (-2,0.25);
	\node at (-6,-2) {\fs $\lambda_1$};
	\node at (-4,-2) {\fs $\lambda_{i-1}$};
	\node at (3.5,-2) {\fs $\lambda_{i+2}$};
	\node at (5.5,-2) {\fs $\lambda_n$};
	\node at (-6,2.5) {\fs $\lambda_1$};
	\node at (-4,2.5) {\fs $\lambda_{i-1}$};
	\node at (3.5,2.5) {\fs $\lambda_{i+2}$};
	\node at (5.5,2.5) {\fs $\lambda_n$};
	\node at (-2,-2) {\fs $\lambda_i$};
	\node at (1,-2) {\fs $\lambda_{i+1}$};
	\node at (-2,2.5) {\fs $\lambda_i\! +\! 1$};
	\node at (1,2.5) {\fs $\lambda_{i+1}\! -\! 1$};
	\node at (-0.5,1) {\ss $1$ \ };
	\node at (-5,0.5) {\,$\cdots$};
	\node at (4.5,0.5) {\,$\cdots$};
	\draw (0,0.25) \wdot;
\et
};
\endxy,
\]
\[
f_{\ol{i}}1_\lambda\mapsto
\xy
(0,0)*{
\bt[color=\clr, scale=.4]
	\draw [ thick, color=\clr, directed=1] (-6,-1.5) to (-6,2);
	\draw [ thick, color=\clr, directed=1] (-4,-1.5) to (-4,2);
	\draw [ thick, color=\clr, directed=1] (3.5,-1.5) to (3.5,2);
	\draw [ thick, color=\clr, directed=1] (5.5,-1.5) to (5.5,2);
	\draw [ thick, color=\clr, directed=0.3, directed=1] (-2,-1.5) to (-2,2);
	\draw [ thick, color=\clr, directed=1, directed=0.3] (1,-1.5) to (1,2);
	\draw [ thick, color=\clr, directed=0.75] (-2,0.25) to (1,0.25);
	\node at (-6,-2) {\fs $\lambda_1$};
	\node at (-4,-2) {\fs $\lambda_{i-1}$};
	\node at (3.5,-2) {\fs $\lambda_{i+2}$};
	\node at (5.5,-2) {\fs $\lambda_n$};
	\node at (-6,2.5) {\fs $\lambda_1$};
	\node at (-4,2.5) {\fs $\lambda_{i-1}$};
	\node at (3.5,2.5) {\fs $\lambda_{i+2}$};
	\node at (5.5,2.5) {\fs $\lambda_n$};
	\node at (-2,-2) {\fs $\lambda_i$};
	\node at (1,-2) {\fs $\lambda_{i+1}$};
	\node at (-2,2.5) {\fs $\lambda_i\! -\! 1$};
	\node at (1,2.5) {\fs $\lambda_{i+1}\! +\! 1$};
	\node at (-0.5,1) {\ss $1$ \ };
	\node at (-5,0.5) {\,$\cdots$};
	\node at (4.5,0.5) {\,$\cdots$};
	\draw (-1,0.25) \wdot;
\et
};
\endxy,\quad\quad h_{\ol{j}}1_\lambda\mapsto
\xy
(0,0)*{
\begin{tikzpicture}[color=\clr, scale=.35]
	\draw [thick, directed=1] (-4,-1) to (-4,2.5);
	\draw [thick, directed=1] (-1.5,-1) to (-1.5,2.5);
	\draw [thick, directed=1] (0.5,-1) to (0.5,2.5);
	\draw [thick, directed=1] (2.5,-1) to (2.5,2.5);
	\draw [thick, directed=1] (5,-1) to (5,2.5);
	\node at (0.5,3.1) {\fs $\lambda_j$};
	\node at (0.5,-1.5) {\fs $\lambda_j$};
	\node at (-1.5,3.1) {\fs $\lambda_{j-1}$};
	\node at (-1.5,-1.5) {\fs $\lambda_{j-1}$};
	\node at (-4,3.1) {\fs $\lambda_1$};
	\node at (-4,-1.5) {\fs $\lambda_1$};
	\node at (2.5,3.1) {\fs $\lambda_{j+1}$};
	\node at (2.5,-1.5) {\fs $\lambda_{j+1}$};
	\node at (5,3.1) {\fs $\lambda_n$};
	\node at (5,-1.5) {\fs $\lambda_n$};
	\node at (-2.8,0.5) { \ $\cdots$};
	\node at (3.7,0.5) { \ $\cdots$};
	\draw (0.5,0.5) \wdot;
\end{tikzpicture}
};
\endxy
\]
for $\lambda\in\Lambda(n,r)$, $1\leq i\leq n-1$, and $1\leq j\leq n$. Moreover $\pi^r\colon\Sdot_c(r)\to\W$ is surjective, and remains surjective when restricting to $\Sdot'_c(r).$
\end{lemma}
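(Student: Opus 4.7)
Well-definedness of $\pi^n$ as an algebra homomorphism amounts to verifying that the images of the generators satisfy each of the defining relations \eqref{S1}--\eqref{S6} of $\Sdot_c(n,r)$. Each such verification reduces to a local web computation: \eqref{S1} follows from the compatibility condition (yielding idempotent orthogonality) together with dot-collision \eqref{dot-collision} and the convention of erasing undotted $0$-strands (Remark~\ref{conventions}); \eqref{S2} is immediate from compatibility; \eqref{S3} follows from the dot-past-merge relation \eqref{dots-past-merges} together with the superinterchange law \eqref{superinterchange}; the Chevalley commutators \eqref{S4} follow from the square-switch relations \eqref{square-switch}, \eqref{square-switch-dots}, and \eqref{square-switch-double-dots}; the nilpotency and commutation relations \eqref{S5} follow from \eqref{2-dots-zero}, the superinterchange law applied to disjoint generators, and the double-rung relations \eqref{double-rungs-1}--\eqref{double-rungs-2}; finally the Serre relations \eqref{S6} follow from rung-collision \eqref{rung-collision} together with \eqref{double-rungs-3}--\eqref{double-rungs-4}. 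While substantial in number, each verification is routine once the correspondence between Chevalley generators and ladder-rung/dot webs is set up.

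For the surjectivity of $\pi^r$, we exhibit preimages of each generator of $\W$. Using the bijection $\a\leftrightarrow\widehat{\a}$ between $\A(r)$ and $\Lambda'(r)$, the identity $\a\in\A(r)$ is $\pi^r(1_{\widehat{\a}})$, and the dot on the $i$-th strand of $\a$ is $\pi^r(h_{\bar{i}}\,1_{\widehat{\a}})$ (using $\a_i>0$). For the merge $(\a_i,\a_{i+1})\to\a_i+\a_{i+1}$ with source $\a=(\a_1,\dots,\a_m)\in\A(r)$, we take the product
\[
e_{m-1}^{(\a_m)}\,e_{m-2}^{(\a_{m-1})}\cdots e_{i+1}^{(\a_{i+2})}\,e_i^{(\a_{i+1})}\,1_{\widehat{\a}}.
\]
The factor $e_i^{(\a_{i+1})}$ contributes $\a_{i+1}$ stacked width-$1$ rungs which, by iterated application of rung-collision \eqref{rung-collision} combined with the $\a_{i+1}!$ in the divided power, collapse to a single width-$\a_{i+1}$ rung from $(\a_i,\a_{i+1})$ to $(\a_i+\a_{i+1},0)$; erasure of the top-right $0$-strand yields precisely the merge. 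The remaining $e_j^{(\a_{j+1})}$ factors for $j>i$ serve only to shift trailing zeros leftward in the length-$r$ indexing so as to reach the target idempotent $1_{\widehat{\b}}\in\Lambda'(r)$, and under the same erasure convention they act as the identity in $\W$. Splits are handled symmetrically via a product of the form $f_i^{(\a_{i+1})}\,f_{i+1}^{(\a_{i+2})}\cdots f_{m-1}^{(\a_m)}\,1_{\widehat{\c}}$.

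For the restriction to $\Sdot'_c(r)=1_{\Lambda'(r)}\,\Sdot_c(r)\,1_{\Lambda'(r)}$, observe that each of the preimages above has both its left and its right idempotent of the form $1_\mu$ with $\mu\in\Lambda'(r)$, so lies in $\Sdot'_c(r)$ (the intermediate idempotents appearing in the product may lie outside $\Lambda'(r)$, but this is irrelevant since $\Sdot'_c(r)$ is an idempotent truncation only at the endpoints). Hence $\pi^r|_{\Sdot'_c(r)}$ remains surjective.

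The main obstacle is the lengthy verification of the six families of relations \eqref{S1}--\eqref{S6}, most notably the super-Serre relations \eqref{S6} together with the intricate super-sign bookkeeping, although each reduces by direct computation to a local web identity established in Section~\ref{web-sec}.
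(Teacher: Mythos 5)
Your proposal is correct and follows essentially the same route as the paper: well-definedness is reduced to checking that the images of the relations (S1)--(S6) hold in $\W$ via the local web identities of Section 4, and surjectivity is established by exhibiting the same preimages $1_{\widehat{\a}}$, $h_{\bar j}1_{\widehat{\a}}$, $e_{r-1}^{(\widehat{\a}_r)}\cdots e_i^{(\widehat{\a}_{i+1})}1_{\widehat{\a}}$, and $f_i^{(\widehat{\a}_{i+1})}\cdots f_{r-1}^{(\widehat{\a}_r)}1_{\widehat{\a}}$ (your truncated products agree with these since the omitted factors are $e_j^{(0)}=f_j^{(0)}=1$). Your additional remarks --- the explicit matching of each relation family to its web counterpart, and the observation that intermediate idempotents outside $\Lambda'(r)$ do not obstruct membership in $\Sdot'_c(r)$ --- are correct refinements of details the paper leaves to the reader.
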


\begin{proof}
First we note that the six pictures above make sense as webs in $\W$ because of our chosen conventions (see Remark \ref{conventions}). Moreover these assignments define $\pi^n$ on all of $\Sdot_c(n,r)$ by \eqref{S-decomp}. 

That $\pi^n$ is well-defined amounts to showing that the images of \eqref{S1}-\eqref{S6} hold in $\W$. This is easily proved by direct calculations involving the defining relations of $\W$, Lemma \ref{additional-relations}, and Lemma \ref{Udot-relations}, so we leave it to the reader. 

To show $\pi^r$ is surjective when restricting to $\Sdot'_c(r)$, we prove that each generator of $\W$ is the image of an element in $\Sdot'_c(r)$. Indeed, one can check that
\[
\pi^r(1_{\widehat{\a}})=\a,\quad\pi^r\left(e_{r-1}^{(\,\widehat{\a}_r)}e_{r-2}^{(\,\widehat{\a}_{r-1})}\cdots e_i^{(\,\widehat{\a}_{i+1})}1_{\widehat{\a}}\right)=
\xy
(0,0)*{
\begin{tikzpicture}[color=\clr, scale=.35]
	\draw [thick, directed=1] (-3,-1) to (-3,2.5);
	\draw [thick, directed=1] (3,-1) to (3,2.5);
	\draw [thick, directed=1] (-5.5,-1) to (-5.5,2.5);
	\draw [thick, directed=1] (5.5,-1) to (5.5,2.5);
	\draw [ thick, directed=1] (0,1) to (0,2.5);
	\draw [ thick, directed=.65] (1,-1) to [out=90,in=330] (0,1);
	\draw [ thick, directed=.65] (-1,-1) to [out=90,in=210] (0,1);
	\node at (0,3) {\fs $\a_i\!+\!\a_{i+1}$};
	\node at (-1,-1.5) {\fs $\a_i$};
	\node at (1,-1.5) {\fs $\a_{i+1}$};
	\node at (-3,-1.5) {\fs $\a_{i-1}$};
	\node at (-3,3) {\fs $\a_{i-1}$};
	\node at (3,-1.5) {\fs $\a_{i+2}$};
	\node at (3,3) {\fs $\a_{i+2}$};
	\node at (-5.5,-1.5) {\fs $\a_1$};
	\node at (-5.5,3) {\fs $\a_1$};
	\node at (5.5,-1.5) {\fs $\a_m$};
	\node at (5.5,3) {\fs $\a_m$};
	\node at (-4.3,0.5) { \ $\cdots$};
	\node at (4.2,0.5) { \ $\cdots$};
\end{tikzpicture}
};
\endxy,
\]
\[
\pi^r\left(f_i^{(\,\widehat{\a}_{i+1})}f_{i+1}^{(\,\widehat{\a}_{i+2})}\cdots f_{r-1}^{(\,\widehat{\a}_r)}1_{\,\widehat{\a}}\right)=
\xy
(0,0)*{
\begin{tikzpicture}[color=\clr, scale=.35]
	\draw [thick, directed=1] (-3,-1) to (-3,2.5);
	\draw [thick, directed=1] (3,-1) to (3,2.5);
	\draw [thick, directed=1] (-5.5,-1) to (-5.5,2.5);
	\draw [thick, directed=1] (5.5,-1) to (5.5,2.5);
	\draw [ thick, directed=0.65] (0,-1) to (0,0.5);
	\draw [ thick, directed=1] (0,0.5) to [out=30,in=270] (1,2.5);
	\draw [ thick, directed=1] (0,0.5) to [out=150,in=270] (-1,2.5); 
	\node at (0,-1.5) {\fs $\a_i\!+\!\a_{i+1}$};
	\node at (-1,3) {\fs $\a_i$};
	\node at (1,3) {\fs $\a_{i+1}$};
	\node at (-3,-1.5) {\fs $\a_{i-1}$};
	\node at (-3,3) {\fs $\a_{i-1}$};
	\node at (3,-1.5) {\fs $\a_{i+2}$};
	\node at (3,3) {\fs $\a_{i+2}$};
	\node at (-5.5,-1.5) {\fs $\a_1$};
	\node at (-5.5,3) {\fs $\a_1$};
	\node at (5.5,-1.5) {\fs $\a_m$};
	\node at (5.5,3) {\fs $\a_m$};
	\node at (-4.3,0.5) { \ $\cdots$};
	\node at (4.2,0.5) { \ $\cdots$};
\end{tikzpicture}
};
\endxy,\quad\pi^r(h_{\ol{j}}1_{\,\widehat{\a}})=
\xy
(0,0)*{
\begin{tikzpicture}[color=\clr, scale=.35]
	\draw [thick, directed=1] (-4,-1) to (-4,2.5);
	\draw [thick, directed=1] (-1.5,-1) to (-1.5,2.5);
	\draw [thick, directed=1] (0.5,-1) to (0.5,2.5);
	\draw [thick, directed=1] (2.5,-1) to (2.5,2.5);
	\draw [thick, directed=1] (5,-1) to (5,2.5);
	\node at (0.5,3) {\fs $\a_j$};
	\node at (0.5,-1.5) {\fs $\a_j$};
	\node at (-1.5,3) {\fs $\a_{j-1}$};
	\node at (-1.5,-1.5) {\fs $\a_{j-1}$};
	\node at (-4,3) {\fs $\a_1$};
	\node at (-4,-1.5) {\fs $\a_1$};
	\node at (2.5,3) {\fs $\a_{j+1}$};
	\node at (2.5,-1.5) {\fs $\a_{j+1}$};
	\node at (5,3) {\fs $\a_m$};
	\node at (5,-1.5) {\fs $\a_m$};
	\node at (-2.8,0.5) { \ $\cdots$};
	\node at (3.7,0.5) { \ $\cdots$};
	\draw (0.5,0.5) \wdot;
\end{tikzpicture}
};
\endxy
\]
for $1\leq i\leq m-1$ and $1\leq j\leq m$, proving surjectivity and the lemma.
\end{proof}

For $\lambda\in\Lambda'(r)$ let $\a_\lambda:=\phi^r(1_\lambda)\in W.$

\begin{theorem}\label{psi-map}
There exists a locally unital isomorphism $\psi\colon\W\xrightarrow{\sim}\End_{\H}(\bigoplus_{\lambda\in\Lambda'(r)}M^\lambda)$ given by
\[\bg
\a\mapsto\phi^r(1_{\widehat{\a}}),\quad\quad
\xy
(0,0)*{
\begin{tikzpicture}[color=\clr, scale=.35]
	\draw [thick, directed=1] (-3,-1) to (-3,2.5);
	\draw [thick, directed=1] (3,-1) to (3,2.5);
	\draw [thick, directed=1] (-5.5,-1) to (-5.5,2.5);
	\draw [thick, directed=1] (5.5,-1) to (5.5,2.5);
	\draw [ thick, directed=1] (0,1) to (0,2.5);
	\draw [ thick, directed=.65] (1,-1) to [out=90,in=330] (0,1);
	\draw [ thick, directed=.65] (-1,-1) to [out=90,in=210] (0,1);
	\node at (0,3) {\fs $\a_i\!+\!\a_{i+1}$};
	\node at (-1,-1.5) {\fs $\a_i$};
	\node at (1,-1.5) {\fs $\a_{i+1}$};
	\node at (-3,-1.5) {\fs $\a_{i-1}$};
	\node at (-3,3) {\fs $\a_{i-1}$};
	\node at (3,-1.5) {\fs $\a_{i+2}$};
	\node at (3,3) {\fs $\a_{i+2}$};
	\node at (-5.5,-1.5) {\fs $\a_1$};
	\node at (-5.5,3) {\fs $\a_1$};
	\node at (5.5,-1.5) {\fs $\a_m$};
	\node at (5.5,3) {\fs $\a_m$};
	\node at (-4.3,0.5) { \ $\cdots$};
	\node at (4.2,0.5) { \ $\cdots$};
\end{tikzpicture}
};
\endxy\mapsto\phi^r\left(e_{r-1}^{(\,\widehat{\a}_r)}e_{r-2}^{(\,\widehat{\a}_{r-1})}\cdots e_i^{(\,\widehat{\a}_{i+1})}1_{\widehat{\a}}\right),\\
\xy
(0,0)*{
\begin{tikzpicture}[color=\clr, scale=.35]
	\draw [thick, directed=1] (-3,-1) to (-3,2.5);
	\draw [thick, directed=1] (3,-1) to (3,2.5);
	\draw [thick, directed=1] (-5.5,-1) to (-5.5,2.5);
	\draw [thick, directed=1] (5.5,-1) to (5.5,2.5);
	\draw [ thick, directed=0.65] (0,-1) to (0,0.5);
	\draw [ thick, directed=1] (0,0.5) to [out=30,in=270] (1,2.5);
	\draw [ thick, directed=1] (0,0.5) to [out=150,in=270] (-1,2.5); 
	\node at (0,-1.5) {\fs $\a_i\!+\!\a_{i+1}$};
	\node at (-1,3) {\fs $\a_i$};
	\node at (1,3) {\fs $\a_{i+1}$};
	\node at (-3,-1.5) {\fs $\a_{i-1}$};
	\node at (-3,3) {\fs $\a_{i-1}$};
	\node at (3,-1.5) {\fs $\a_{i+2}$};
	\node at (3,3) {\fs $\a_{i+2}$};
	\node at (-5.5,-1.5) {\fs $\a_1$};
	\node at (-5.5,3) {\fs $\a_1$};
	\node at (5.5,-1.5) {\fs $\a_m$};
	\node at (5.5,3) {\fs $\a_m$};
	\node at (-4.3,0.5) { \ $\cdots$};
	\node at (4.2,0.5) { \ $\cdots$};
\end{tikzpicture}
};
\endxy\mapsto\phi^r\left(f_i^{(\,\widehat{\a}_{i+1})}f_{i+1}^{(\,\widehat{\a}_{i+2})}\cdots f_{r-1}^{(\,\widehat{\a}_r)}1_{\,\widehat{\a}}\right),\\
\xy
(0,0)*{
\begin{tikzpicture}[color=\clr, scale=.35]
	\draw [thick, directed=1] (-4,-1) to (-4,2.5);
	\draw [thick, directed=1] (-1.5,-1) to (-1.5,2.5);
	\draw [thick, directed=1] (0.5,-1) to (0.5,2.5);
	\draw [thick, directed=1] (2.5,-1) to (2.5,2.5);
	\draw [thick, directed=1] (5,-1) to (5,2.5);
	\node at (0.5,3) {\fs $\a_j$};
	\node at (0.5,-1.5) {\fs $\a_j$};
	\node at (-1.5,3) {\fs $\a_{j-1}$};
	\node at (-1.5,-1.5) {\fs $\a_{j-1}$};
	\node at (-4,3) {\fs $\a_1$};
	\node at (-4,-1.5) {\fs $\a_1$};
	\node at (2.5,3) {\fs $\a_{j+1}$};
	\node at (2.5,-1.5) {\fs $\a_{j+1}$};
	\node at (5,3) {\fs $\a_m$};
	\node at (5,-1.5) {\fs $\a_m$};
	\node at (-2.8,0.5) { \ $\cdots$};
	\node at (3.7,0.5) { \ $\cdots$};
	\draw (0.5,0.5) \wdot;
\end{tikzpicture}
};
\endxy\mapsto\phi^r(h_{\ol{j}}1_{\,\widehat{\a}}),
\eg\]
for $\a\in\A(r)$, $m\geq 1$, $1\leq i\leq m-1$, and $1\leq j\leq m.$ In particular, we have superspace isomorphisms
\[
\psi_{\a,\b}\colon\b\W\a\xrightarrow{\sim}\Hom_{\H}(M^{\,\widehat{\a}},M^{\,\widehat{\b}})
\]
for $\a,\b\in\A(r).$
\end{theorem}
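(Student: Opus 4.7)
The proof breaks into three pieces: well-definedness of $\psi$ as a locally unital homomorphism of superalgebras, surjectivity, and injectivity.

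For well-definedness, observe first that each assigned image lies in the correct morphism space: for a generator $g \in \b\W\a$, the formula in the statement produces an element of $\phi^r(1_{\widehat\b})\,\End_\H(V^{\otimes r})\,\phi^r(1_{\widehat\a}) = \Hom_\H(M^{\widehat\a}, M^{\widehat\b})$. The key observation is that the defining assignments match the formulas from the proof of Lemma \ref{pi-map}: for each generator $g$ of $\W$ there is a canonical lift $\widetilde g \in \Sdot_c(r)$ (the corresponding product of Chevalley generators times $1_{\widehat\a}$) satisfying $\pi^r(\widetilde g) = g$ and $\psi(g) = \phi^r(\widetilde g)$. Consequently, given any defining relation $R(g_1,\ldots,g_k) = 0$ of $\W$, the image under $\psi$ equals $\phi^r\bigl(R(\widetilde g_1,\ldots,\widetilde g_k)\bigr)$. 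Since $\phi^r$ is injective (Proposition \ref{phi-map}), well-definedness reduces to verifying each relation of $\W$ as an identity among Chevalley generators in $\Sdot_c(r)$. I would run through the relations in order---associativity, digon removal, dot collision, dots past merges, the dumbbell relation, the two square switches, and the two double-rung relations---and translate each diagrammatic identity into a product of $e_i, f_i, e_{\bar i}, f_{\bar i}, h_{\bar j}, 1_\lambda$, which then follows from (S1)--(S6).

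Once $\psi$ is known to be a locally unital homomorphism, the identity $\psi \circ \pi^r|_{\Sdot'_c(r)} = \phi^r|_{\Sdot'_c(r)}$ holds automatically (both sides are homomorphisms agreeing on the generators of $\Sdot'_c(r)$). Surjectivity of $\psi$ is then immediate: $\phi^r|_{\Sdot'_c(r)}$ is an isomorphism onto $\End_\H(\bigoplus_{\lambda \in \Lambda'(r)} M^\lambda)$ (being the restriction of the isomorphism $\phi^r$ to the idempotent truncation defined by $\{1_\lambda : \lambda \in \Lambda'(r)\}$), and $\pi^r|_{\Sdot'_c(r)}$ is surjective by Lemma \ref{pi-map}. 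For injectivity, suppose $\psi(w) = 0$ and choose $x \in \Sdot'_c(r)$ with $\pi^r(x) = w$; then $\phi^r(x) = \psi(\pi^r(x)) = 0$, so $x = 0$ by the injectivity of $\phi^r|_{\Sdot'_c(r)}$, hence $w = 0$. The claimed superspace isomorphisms $\psi_{\a,\b}$ come for free from $\psi$ being locally unital: the distinguished idempotent $\a \in \W$ is sent to the distinguished idempotent $\phi^r(1_{\widehat\a})$ of the target, so $\psi$ restricts to an isomorphism on each homogeneous block.

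The main obstacle is the well-definedness step, i.e., verifying that each defining relation of $\W$ pulls back to a true identity in $\Sdot_c(r)$. The simpler relations (associativity, digon removal, dot collision, dots past merges) come from (S1), (S2), (S3), and standard commutators of divided powers, and are essentially bookkeeping. The more intricate relations---the dumbbell relation, the square switch with dots \eqref{square-switch-dots}, and the two double-rung relations \eqref{double-rungs-1}--\eqref{double-rungs-2}---genuinely use the odd-even commutation rules (S3)--(S5), and each requires a careful but finite computation. If the algebraic translation becomes unwieldy for any particular relation, a fallback is to verify it directly inside $\End_\H$ by computing both sides on the supertabloid basis of $M^\lambda$ described in Section \ref{supertabloids-section}, using the explicit $\H$-action recorded there.
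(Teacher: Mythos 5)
Your well-definedness argument coincides with the paper's: $\psi$ of each web generator is by construction $\phi^r$ of a designated lift in $\Sdot'_c(r)$, so checking the relations of $\W$ reduces to checking the corresponding identities among Chevalley generators, which follow from \eqref{S1}--\eqref{S6}. The gap is in the bijectivity step, specifically in the claim that $\psi\circ\pi^r|_{\Sdot'_c(r)}=\phi^r|_{\Sdot'_c(r)}$ holds ``automatically'' because both sides agree on generators. The superalgebra $\Sdot'_c(r)$ is an idempotent truncation, not an algebra generated by elements on which agreement is known: a typical element of $1_\mu\Sdot_c(r)1_\lambda$ with $\lambda,\mu\in\Lambda'(r)$ is a product of Chevalley generators whose intermediate weights leave $\Lambda'(r)$, and on the individual factors the two maps genuinely differ. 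For instance, if $\nu\in\Lambda(r)\setminus\Lambda'(r)$, then $\pi^r(1_\nu)$ is the identity web on the nonzero part of $\nu$, so $\psi(\pi^r(1_\nu))=\phi^r(1_{\nu'})$ where $\nu'\in\Lambda'(r)$ is obtained by pushing the internal zeros to the end, while $\phi^r(1_\nu)$ is the projection onto $M^{\nu}$, a different subspace of $V^{\otimes r}$ than $M^{\nu'}$. Thus the commutativity of the triangle on all of $\Sdot'_c(r)$ amounts to the nontrivial assertion that $\Sdot'_c(r)$ is generated by the designated lifts, which is essentially as hard as the surjectivity you want to deduce from it. (The paper's own remark that diagram \eqref{triangle} commutes is only ever used at the level of the designated lifts and their products, where it does hold by construction.)

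Your injectivity argument can be repaired: since $\pi^r$ maps the subalgebra $A\subseteq\Sdot'_c(r)$ generated by the lifts onto $\W$, you may choose the preimage $x$ of $w$ inside $A$, where $\psi\circ\pi^r=\phi^r$ does hold factor by factor, and conclude $w=0$ from injectivity of $\phi^r$. Surjectivity, however, cannot be obtained this way without first knowing $A=\Sdot'_c(r)$, and this is exactly where the paper does its real work: it reduces bijectivity of all components $\psi_{\b,\c}$ to that of $\psi_{\a_\omega,\a_\omega}$ via the injective explosion maps $\beta_\b^\c$ of Lemma \ref{beta-theta-maps}, and then proves $\psi_{\a_\omega,\a_\omega}$ is bijective by composing with $\xi\colon\H\to\a_\omega\W\a_\omega$ and computing the resulting action on $M^{\omega}\simeq\H$ explicitly on the supertabloid $T^0$. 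That computation simultaneously shows $\xi$ is injective, hence an isomorphism (Corollary \ref{xi-iso}), a fact needed later for the basis theorem and not recovered by your route. You should either supply a proof that the lifts generate $\Sdot'_c(r)$ or fall back on the paper's argument for surjectivity.
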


\begin{proof}
From the definitions, it is clear that we have a commutative diagram
\beq\label{triangle}
\xy
(0,0)*{
\bt
\node (S) at (0,0) {$\Sdot'_c(r)$};
\node (W) at (0,-2) {$\W$};
\node (E) at (4,0) {$\End_{\H}(\bigoplus_{\lambda\in\Lambda'(r)}M^\lambda)$};
\draw[->, ] (S) to [above] node {$\phi^r$} (E); 
\draw[->, ] (S) to [left] node {$\pi^r$} (W);
\draw[->, ] (W) to [below] node {$\psi$} (E);
\et
};
\endxy.
\eeq
%
 Next, one can check that the defining relations of $\W$ are the images under $\pi^r$ of certain cases of relations \eqref{S1}-\eqref{S6} of $\Sdot_c(r)$. By commutativity of \eqref{triangle}, this implies the images under $\psi$ of the defining relations of $\W$ hold in the endomorphism superalgebra, so $\psi$ is well-defined.

Now it is clear that $\psi$ is locally unital, and for $\lambda,\mu\in\Lambda'(r)$ the component $\Hom_{\H}(M^\lambda,M^\mu)$ is the target of the component map $\psi_{\a_\lambda,\a_\mu}$ and no other component maps. This implies $\psi$ is an isomorphism if and only if its component linear maps $\psi_{\b,\c}$ are bijective for all $\b,\c\in\A(r)$, and we claim this in turn is true if just $\psi_{\a_\omega,\a_\omega}$ alone is bijective. Indeed, let $s_\c\in\a_\omega\W\c$ and $m_\b\in\b\W\a_\omega$ be the webs such that $\beta_\b^\c(u)=s_\c\,u\,m_\b$ for all $u\in\c\W\b$ (i.e. $s_\c$ consists of the appropriate splits and $m_\b$ of the appropriate merges). Then since $\psi$ is a homomorphism, we have a commutative diagram
\[
\xy
(0,0)*{
\bt
\node (S) at (0,0) {$\c\W\b$};
\node (W) at (0,-1.5) {$\a_\omega\W\a_\omega$};
\node (E) at (3,0) {$\Hom_{\H}(M^{\,\widehat{\b}},M^{\,\widehat{\c}}\,)$};
\node (H) at (3,-1.5) {$\End_{\H}(M^{\omega})$};
\draw[->, ] (S) to [above] node {$\psi_{\b,\c}$} (E); 
\draw[->, ] (S) to [left] node {$\beta_\b^\c$} (W);
\draw[->, ] (W) to [below] node {$\psi_{\a_\omega,\a_\omega}$} (H);
\draw[->] (E) to [right] node {$\zeta_\b^\c$} (H);
\et
};
\endxy
\]
where $\zeta_\b^\c$ is the linear map given by $\zeta_\b^\c(y)=\psi(s_\c)\,y\,\psi(m_\b)$ for $y\in\Hom_\H(M^{\,\widehat{\b}},M^{\,\widehat{\c}}\,).$ It is injective because $\beta_\b^\c$ is by Lemma \ref{beta-theta-maps}. Using this, one sees by diagram-chasing that $\psi_{\b,\c}$ is injective (resp. surjective) if $\psi_{\a_\omega,\a_\omega}$ is injective (resp. surjective), proving the claim.

To show $\psi_{\a_\omega,\a_\omega}$ is bijective, we consider the homomorphism $\psi_{\a_\omega,\a_\omega}\circ\xi\colon\H\to\End_{\H}(M^{\omega})$. From the $\H$-isomorphism $\H\simeq M^{\omega}$ (see \S\ref{perm-mods}), we know elements of $\End_{\H}(M^{\omega})$ are completely determined by where they send $T^0$, the $\omega$-supertabloid with entries $1,2,\dots,r$ from top to bottom. One can check that
\[
(\psi_{\a_\omega,\a_\omega}\circ\xi)(s_i)(T^0)=T^0_{i\leftrightarrow i+1},\quad(\psi_{\a_\omega,\a_\omega}\circ\xi)(c_j)(T^0)=T^0_{j\to j'}
\]
for $1\leq i\leq r-1$ and $1\leq j\leq r$, from which it follows that $\psi_{\a_\omega,\a_\omega}\circ\xi$ is both injective and surjective. This implies $\xi$ is injective and $\psi_{\a_\omega,\a_\omega}$ is surjective. Since $\xi$ is surjective by Proposition \ref{xi-map} it is bijective, hence $\psi_{\a_\omega,\a_\omega}$ is bijective, completing the proof.
\end{proof}

The last paragraph of the above proof also established the following.

\begin{corollary}\label{xi-iso}
The homomorphism $\xi\colon\H\to\a_\omega\W\a_\omega$ is an isomorphism.
\end{corollary}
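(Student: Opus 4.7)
Surjectivity has already been established in Proposition \ref{xi-map}, so the task is to prove that $\xi$ is injective. The plan is to exhibit an injective map out of $\H$ which factors through $\xi$, and then conclude.

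To set this up, I would consider the composition
\[
\psi_{\a_\omega,\a_\omega}\circ\xi\colon\H\longrightarrow\End_{\H}(M^{\omega}),
\]
where $\psi_{\a_\omega,\a_\omega}$ is the component map constructed in Theorem \ref{psi-map}. Recall from \S\ref{perm-mods} the $\H$-isomorphism $\H\xrightarrow{\sim}M^{\omega}$ that sends a standard basis element $x=c_1^{a_1}\cdots c_r^{a_r}\sigma$ to $x.T^0$, where $T^0\in\T(\omega)$ is the supertabloid with entries $1,2,\dots,r$ from top to bottom. Under this identification, an element $y\in\End_{\H}(M^{\omega})$ is uniquely determined by $y(T^0)$, and the map sending $y$ to $y(T^0)$ realizes $\End_{\H}(M^{\omega})$ as an $\H$-supermodule isomorphic to $M^{\omega}$.

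The key computation is to evaluate $(\psi_{\a_\omega,\a_\omega}\circ\xi)(s_i)$ and $(\psi_{\a_\omega,\a_\omega}\circ\xi)(c_j)$ on $T^0$. Tracing through the definitions of $\xi$ (as the crossing and dot of Definition \ref{crossing-def}) and of $\psi_{\a_\omega,\a_\omega}$ (as an endomorphism of $M^\omega$ induced from the action of $\Sdot_c(r)$), one obtains
\[
(\psi_{\a_\omega,\a_\omega}\circ\xi)(s_i)(T^0)=T^0_{i\leftrightarrow i+1},\qquad (\psi_{\a_\omega,\a_\omega}\circ\xi)(c_j)(T^0)=T^0_{j\to j'}
\]
for $1\leq i\leq r-1$, $1\leq j\leq r$. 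This is a direct unwinding of how the crossing arises from the dumbbell relation together with the isomorphism between $M^\omega$ and $\H$. In terms of the $\H$-isomorphism $\End_{\H}(M^{\omega})\simeq M^{\omega}\simeq\H$, the composition $\psi_{\a_\omega,\a_\omega}\circ\xi$ corresponds to the action of $\H$ on the identity element $1\in\H$, i.e.\ to the canonical isomorphism $\H\xrightarrow{\sim}\H$, which is obviously injective.

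Injectivity of $\psi_{\a_\omega,\a_\omega}\circ\xi$ forces injectivity of $\xi$, which combined with the surjectivity from Proposition \ref{xi-map} completes the proof. The only genuine point of care in the argument is verifying the two identities above on $T^0$; everything else is formal. (As a bonus, the same argument also shows $\psi_{\a_\omega,\a_\omega}$ is surjective, which is what is used in the proof of Theorem \ref{psi-map}.)
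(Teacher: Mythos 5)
Your proposal is correct and follows essentially the same route as the paper: the paper derives this corollary from the last paragraph of the proof of Theorem \ref{psi-map}, where it considers $\psi_{\a_\omega,\a_\omega}\circ\xi$, identifies $M^\omega$ with $\H$ via $T^0$, verifies the same two identities on $T^0$, and concludes injectivity of $\xi$ together with the surjectivity from Proposition \ref{xi-map}. No substantive difference.
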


We now explicitly describe the $\H$-morphisms which are the images under $\psi$ of merges, splits, and dots. For $\lambda\in\Lambda'(r)$, $T\in\T(\lambda)$, and $1\leq i\leq l(\lambda)-1,$ we define $\merge_i(T)$ to be the supertabloid obtained by merging rows $i$ and $i+1$ of $T$. For $1\leq i\leq l(\lambda)$ and $k,l\in\Z_{>0}$ with $k+l=\lambda_i$, let $\split_i^{k,\,l}(T)$ be the sum of all supertabloids which can be obtained by splitting row $i$ of $T$ into (on top) a row of length $k$ and (on bottom) a row of length $l$. Recall also from Section \ref{supertabloids-section} the notation $T_{i\leftrightarrow i+1}$ and $T_{j\to j'}$ for $1\leq i\leq r-1$, $1\leq j\leq r$. Then one can check that
\beq\label{psi-dot}
\psi\left(
\xy
(0,0)*{
\begin{tikzpicture}[color=\clr, scale=.35]
	\draw [thick, directed=1] (-4,-1) to (-4,2.5);
	\draw [thick, directed=1] (-1.5,-1) to (-1.5,2.5);
	\draw [thick, directed=1] (0.5,-1) to (0.5,2.5);
	\draw [thick, directed=1] (2.5,-1) to (2.5,2.5);
	\draw [thick, directed=1] (5,-1) to (5,2.5);
	\node at (0.5,3) {\fs $\lambda_j$};
	\node at (0.5,-1.5) {\fs $\lambda_j$};
	\node at (-1.5,3) {\fs $\lambda_{j-1}$};
	\node at (-1.5,-1.5) {\fs $\lambda_{j-1}$};
	\node at (-4,3) {\fs $\lambda_1$};
	\node at (-4,-1.5) {\fs $\lambda_1$};
	\node at (2.5,3) {\fs $\lambda_{j+1}$};
	\node at (2.5,-1.5) {\fs $\lambda_{j+1}$};
	\node at (5,3) {\fs $\lambda_m$};
	\node at (5,-1.5) {\fs $\lambda_m$};
	\node at (-2.8,0.5) { \ $\cdots$};
	\node at (3.7,0.5) { \ $\cdots$};
	\draw (0.5,0.5) \wdot;
\end{tikzpicture}
};
\endxy\right)(T) \ = \ \sum_{\substack{1\leq h\leq r\\ T_h\text{ lies in row }j}}(-1)^{|T_1|+\cdots+|T_{h-1}|}T_{h\to h'},
\eeq
\beq\label{psi-merge}
\psi\left(
\xy
(0,0)*{
\begin{tikzpicture}[color=\clr, scale=.35]
	\draw [thick, directed=1] (-3,-1) to (-3,2.5);
	\draw [thick, directed=1] (3,-1) to (3,2.5);
	\draw [thick, directed=1] (-5.5,-1) to (-5.5,2.5);
	\draw [thick, directed=1] (5.5,-1) to (5.5,2.5);
	\draw [ thick, directed=1] (0,1) to (0,2.5);
	\draw [ thick, directed=.65] (1,-1) to [out=90,in=330] (0,1);
	\draw [ thick, directed=.65] (-1,-1) to [out=90,in=210] (0,1);
	\node at (0,3) {\fs $\lambda_i\!+\!\lambda_{i+1}$};
	\node at (-1,-1.5) {\fs $\lambda_i$};
	\node at (1,-1.5) {\fs $\lambda_{i+1}$};
	\node at (-3,-1.5) {\fs $\lambda_{i-1}$};
	\node at (-3,3) {\fs $\lambda_{i-1}$};
	\node at (3,-1.5) {\fs $\lambda_{i+2}$};
	\node at (3,3) {\fs $\lambda_{i+2}$};
	\node at (-5.5,-1.5) {\fs $\lambda_1$};
	\node at (-5.5,3) {\fs $\lambda_1$};
	\node at (5.5,-1.5) {\fs $\lambda_m$};
	\node at (5.5,3) {\fs $\lambda_m$};
	\node at (-4.3,0.5) { \ $\cdots$};
	\node at (4.2,0.5) { \ $\cdots$};
\end{tikzpicture}
};
\endxy\right)(T) \ = \ \merge_i(T),
\eeq
\beq\label{psi-split}
\psi\left(
\xy
(0,0)*{
\begin{tikzpicture}[color=\clr, scale=.35]
	\draw [thick, directed=1] (-3,-1) to (-3,2.5);
	\draw [thick, directed=1] (3,-1) to (3,2.5);
	\draw [thick, directed=1] (-5.5,-1) to (-5.5,2.5);
	\draw [thick, directed=1] (5.5,-1) to (5.5,2.5);
	\draw [ thick, directed=0.65] (0,-1) to (0,0.5);
	\draw [ thick, directed=1] (0,0.5) to [out=30,in=270] (1,2.5);
	\draw [ thick, directed=1] (0,0.5) to [out=150,in=270] (-1,2.5); 
	\node at (0,-1.5) {\fs $\lambda_i$};
	\node at (-1,3) {\fs $k$};
	\node at (1,3) {\fs $l$};
	\node at (-3,-1.5) {\fs $\lambda_{i-1}$};
	\node at (-3,3) {\fs $\lambda_{i-1}$};
	\node at (3,-1.5) {\fs $\lambda_{i+1}$};
	\node at (3,3) {\fs $\lambda_{i+1}$};
	\node at (-5.5,-1.5) {\fs $\lambda_1$};
	\node at (-5.5,3) {\fs $\lambda_1$};
	\node at (5.5,-1.5) {\fs $\lambda_m$};
	\node at (5.5,3) {\fs $\lambda_m$};
	\node at (-4.3,0.5) { \ $\cdots$};
	\node at (4.2,0.5) { \ $\cdots$};
\end{tikzpicture}
};
\endxy\right)(T) \ = \ \split_i^{k,\,l}(T)
\eeq
where $m:=l(\lambda).$ For example, 
\[
\psi\left(
\xy
(0,0)*{
\begin{tikzpicture}[scale=.3, color=\clr]
	\draw [thick, directed=1] (-1,-1) to (-1,2.5);
	\draw [thick, directed=1] (0.5,-1) to (0.5,2.5);
	\draw [thick, directed=1] (2,-1) to (2,2.5);
	\node at (0.5,3) {\scriptsize $1$};
	\node at (0.5,-1.5) {\scriptsize $1$};
	\node at (-1,3) {\scriptsize $2$};
	\node at (-1,-1.5) {\scriptsize $2$};
	\node at (2,3) {\scriptsize $3$};
	\node at (2,-1.5) {\scriptsize $3$};
	\draw (2,0.5) \wdot;
\end{tikzpicture}
};
\endxy\right)
\left(\,
\xy
(0,0)*{
\bt[scale=0.5]
	\draw (0,0) to (2,0);
	\draw (0,-1) to (2,-1);
	\draw (0,-2) to (3,-2);
	\draw (0,-3) to (3,-3);
	\node at (0.5,-0.5) { $3'$};
	\node at (1.5,-0.5) { $6'$};
	\node at (0.5,-1.5) { $2$};
	\node at (0.5,-2.5) { $1$};
	\node at (1.5,-2.5) { $4'$};
	\node at (2.5,-2.5) { $5$};
\et
};
\endxy\,\right) \ = \ 
\xy
(0,0)*{
\bt[scale=0.5]
	\draw (0,0) to (2,0);
	\draw (0,-1) to (2,-1);
	\draw (0,-2) to (3,-2);
	\draw (0,-3) to (3,-3);
	\node at (0.5,-0.5) { $3'$};
	\node at (1.5,-0.5) { $6'$};
	\node at (0.5,-1.5) { $2$};
	\node at (0.5,-2.5) { $1'$};
	\node at (1.5,-2.5) { $4'$};
	\node at (2.5,-2.5) { $5$};
\et
};
\endxy \ -\,
\xy
(0,0)*{
\bt[scale=0.5]
	\draw (0,0) to (2,0);
	\draw (0,-1) to (2,-1);
	\draw (0,-2) to (3,-2);
	\draw (0,-3) to (3,-3);
	\node at (0.5,-0.5) { $3'$};
	\node at (1.5,-0.5) { $6'$};
	\node at (0.5,-1.5) { $2$};
	\node at (0.5,-2.5) { $1$};
	\node at (1.5,-2.5) { $4$};
	\node at (2.5,-2.5) { $5$};
\et
};
\endxy \ + \ 
\xy
(0,0)*{
\bt[scale=0.5]
	\draw (0,0) to (2,0);
	\draw (0,-1) to (2,-1);
	\draw (0,-2) to (3,-2);
	\draw (0,-3) to (3,-3);
	\node at (0.5,-0.5) { $3'$};
	\node at (1.5,-0.5) { $6'$};
	\node at (0.5,-1.5) { $2$};
	\node at (0.5,-2.5) { $1$};
	\node at (1.5,-2.5) { $4'$};
	\node at (2.5,-2.5) { $5'$};
\et
};
\endxy \ ,
\]
\[
\psi\left(
\xy
(0,0)*{
\begin{tikzpicture}[scale=.3, color=\clr]
	\draw [thick, directed=1] (2.5,-1) to (2.5,2.5);
	\draw [ thick, directed=1] (0,1) to (0,2.5);
	\draw [ thick, directed=.65] (1,-1) to [out=90,in=330] (0,1);
	\draw [ thick, directed=.65] (-1,-1) to [out=90,in=210] (0,1);
	\node at (0,3) {\scriptsize $3$};
	\node at (-1,-1.5) {\scriptsize $2$};
	\node at (1,-1.5) {\scriptsize $1$};
	\node at (2.5,-1.5) {\scriptsize $3$};
	\node at (2.5,3) {\scriptsize $3$};
\end{tikzpicture}
};
\endxy\right)
\left(\,
\xy
(0,0)*{
\bt[scale=0.5]
	\draw (0,0) to (2,0);
	\draw (0,-1) to (2,-1);
	\draw (0,-2) to (3,-2);
	\draw (0,-3) to (3,-3);
	\node at (0.5,-0.5) { $3'$};
	\node at (1.5,-0.5) { $6'$};
	\node at (0.5,-1.5) { $2$};
	\node at (0.5,-2.5) { $1$};
	\node at (1.5,-2.5) { $4'$};
	\node at (2.5,-2.5) { $5$};
\et
};
\endxy\,\right) \ = \ 
\xy
(0,0)*{
\bt[scale=0.5]
	\draw (0,0) to (3,0);
	\draw (0,-1) to (3,-1);
	\draw (0,-2) to (3,-2);
	\node at (0.5,-0.5) { $2$};
	\node at (1.5,-0.5) { $3'$};
	\node at (2.5,-0.5) { $6'$};
	\node at (0.5,-1.5) { $1$};
	\node at (1.5,-1.5) { $4'$};
	\node at (2.5,-1.5) { $5$};
\et
};
\endxy \ ,
\]
\[
\psi\left(
\xy
(0,0)*{
\begin{tikzpicture}[scale=.3, color=\clr]
	\draw [thick, directed=1] (-3,-1) to (-3,2.5);
	\draw [thick, directed=1] (-5,-1) to (-5,2.5);
	\draw [ thick, directed=0.65] (0,-1) to (0,0.5);
	\draw [ thick, directed=1] (0,0.5) to [out=30,in=270] (1,2.5);
	\draw [ thick, directed=1] (0,0.5) to [out=150,in=270] (-1,2.5); 
	\node at (0,-1.5) {\scriptsize $3$};
	\node at (-1,3) {\scriptsize $2$};
	\node at (1,3) {\scriptsize $1$};
	\node at (-3,-1.5) {\scriptsize $1$};
	\node at (-3,3) {\scriptsize $1$};
	\node at (-5,-1.5) {\scriptsize $2$};
	\node at (-5,3) {\scriptsize $2$};
\end{tikzpicture}
};
\endxy\right)
\left(\,
\xy
(0,0)*{
\bt[scale=0.5]
	\draw (0,0) to (2,0);
	\draw (0,-1) to (2,-1);
	\draw (0,-2) to (3,-2);
	\draw (0,-3) to (3,-3);
	\node at (0.5,-0.5) { $3'$};
	\node at (1.5,-0.5) { $6'$};
	\node at (0.5,-1.5) { $2$};
	\node at (0.5,-2.5) { $1$};
	\node at (1.5,-2.5) { $4'$};
	\node at (2.5,-2.5) { $5$};
\et
};
\endxy\,\right) \ = \ 
\xy
(0,0)*{
\bt[scale=0.5]
	\draw (0,0) to (2,0);
	\draw (0,-1) to (2,-1);
	\draw (0,-2) to (2,-2);
	\draw (0,-3) to (2,-3);
	\draw (0,-4) to (1,-4);
	\node at (0.5,-0.5) { $3'$};
	\node at (1.5,-0.5) { $6'$};
	\node at (0.5,-1.5) { $2$};
	\node at (0.5,-2.5) { $1$};
	\node at (1.5,-2.5) { $4'$};
	\node at (0.5,-3.5) { $5$};
\et
};
\endxy \ + \ 
\xy
(0,0)*{
\bt[scale=0.5]
	\draw (0,0) to (2,0);
	\draw (0,-1) to (2,-1);
	\draw (0,-2) to (2,-2);
	\draw (0,-3) to (2,-3);
	\draw (0,-4) to (1,-4);
	\node at (0.5,-0.5) { $3'$};
	\node at (1.5,-0.5) { $6'$};
	\node at (0.5,-1.5) { $2$};
	\node at (0.5,-2.5) { $1$};
	\node at (1.5,-2.5) { $5$};
	\node at (0.5,-3.5) { $4'$};
\et
};
\endxy \ + \ 
\xy
(0,0)*{
\bt[scale=0.5]
	\draw (0,0) to (2,0);
	\draw (0,-1) to (2,-1);
	\draw (0,-2) to (2,-2);
	\draw (0,-3) to (2,-3);
	\draw (0,-4) to (1,-4);
	\node at (0.5,-0.5) { $3'$};
	\node at (1.5,-0.5) { $6'$};
	\node at (0.5,-1.5) { $2$};
	\node at (0.5,-2.5) { $4'$};
	\node at (1.5,-2.5) { $5$};
	\node at (0.5,-3.5) { $1$};
\et
};
\endxy \ .
\]
Note that by combining \eqref{psi-merge} and \eqref{psi-split} with Definition \ref{crossing-def} we have
\beq\label{psi-crossing}
\psi(s_i)(T)=T_{i\leftrightarrow i+1}
\eeq
for $1\leq i\leq r-1$ and $T\in\T(\omega)$ (with no sign in front of the $T_{i\leftrightarrow i+1}$).


\subsection{A diagrammatic basis of $\Hom_{\H}(M^\lambda,M^\mu)$}


We now prove the second main result of the paper by giving a diagrammatic basis of $\Hom_{\H}(M^\lambda,M^\mu)$ for $\lambda,\mu\in\Lambda'(r)$. We do so by giving a basis of $\a_\mu\W\a_\lambda$ and applying $\psi$. Elements of the latter basis will be of the form
\[
\theta_T:=\theta(w_T)=
\xy
(0,0)*{\rotatebox{180}{
\bt[scale=.35, color=\clr]
	\draw [ thick, rdirected=0.15] (0,1.5) to (0,2.5);
	\draw [ thick, rdirected=0.55] (0,2.5) to [out=30,in=270] (1,4.25);
	\draw [ thick, rdirected=0.55] (0,2.5) to [out=150,in=270] (-1,4.25); 
	\draw [ thick, rdirected=0.15] (5,1.5) to (5,2.5);
	\draw [ thick, rdirected=0.55] (5,2.5) to [out=30,in=270] (6,4.25);
	\draw [ thick, rdirected=0.55] (5,2.5) to [out=150,in=270] (4,4.25); 
	\node at (0,9) {\rotatebox{180}{\scriptsize $\lambda_m$}};
	\node at (5,9) {\rotatebox{180}{\scriptsize $\lambda_1$}};
	\node at (0,0.75) {\rotatebox{180}{\scriptsize $\mu_{m'}$}};
	\node at (5,0.75) {\rotatebox{180}{\scriptsize $\mu_1$}};
	\node at (3.6, 3.75) {\rotatebox{180}{\scriptsize $1$}};
	\node at (6.4, 3.75) {\rotatebox{180}{\scriptsize $1$}};
	\node at (1.4, 3.75) {\rotatebox{180}{\scriptsize $1$}};
	\node at (-1.4, 3.75) {\rotatebox{180}{\scriptsize $1$}};
	\node at (3.6, 6.25) {\rotatebox{180}{\scriptsize $1$}};
	\node at (6.4, 6.25) {\rotatebox{180}{\scriptsize $1$}};
	\node at (1.4, 6.25) {\rotatebox{180}{\scriptsize $1$}};
	\node at (-1.4,6.25) {\rotatebox{180}{\scriptsize $1$}};
	\draw [ thick, rdirected=0.65] (0, 7.5) to (0,8.5);
	\draw [ thick, rdirected=0.55] (1,5.75) to [out=90,in=330] (0,7.5);
	\draw [ thick, rdirected=0.55] (-1,5.75) to [out=90,in=210] (0,7.5);
	\draw [ thick, rdirected=0.65] (5,7.5) to (5,8.5);
	\draw [ thick, rdirected=0.55] (6,5.75) to [out=90,in=330] (5,7.5);
	\draw [ thick, rdirected=0.55] (4,5.75) to [out=90,in=210] (5,7.5);
	\draw [ thick ] (-1.5,5.75) rectangle (6.5,4.25);
	\node at (0.1, 6.25) { $\cdots$};
	\node at (5.1, 6.25) { $\cdots$};
	\node at (0.1, 3.75) { $\cdots$};
	\node at (5.1, 3.75) { $\cdots$};
	\node at (2.6, 7) { $\cdots$};
	\node at (2.6, 2.75) { $\cdots$};
	\node at (2.6, 5) {\rotatebox{180}{ $w_T$}};
\et
}};
\endxy
\]
where $m=l(\lambda)$, $m'=l(\mu)$, and $w_T$ is a certain Sergeev diagram corresponding to a supertabloid $T\in\T(\lambda,\mu)$. 
 
We introduce some useful terminology before describing $w_T$. For $1\leq i\leq m$ we refer to the iterated split
\[
\xy
(0,0)*{\rotatebox{180}{
\bt[scale=.35, color=\clr]
	\node at (5,10.5) {\rotatebox{180}{\scriptsize $\lambda_i$}};
	\node at (3.6, 7.75) {\rotatebox{180}{\scriptsize $1$}};
	\node at (6.4, 7.75) {\rotatebox{180}{\scriptsize $1$}};
	\draw [ thick, rdirected=0.65] (5,9) to (5,10);
	\draw [ thick, rdirected=0.55] (6,7.25) to [out=90,in=330] (5,9);
	\draw [ thick, rdirected=0.55] (4,7.25) to [out=90,in=210] (5,9);
	\node at (5.1, 7.5) { $\cdots$};
\et
}};
\endxy
\]
of $\theta_T$ as the \emph{$i^{\th}$ split}, and for $1\leq j\leq m'$ we refer to the iterated merge
\[
\xy
(0,0)*{\rotatebox{180}{
\bt[scale=.35, color=\clr]
	\draw [ thick, rdirected=0.15] (5,1.5) to (5,2.5);
	\draw [ thick, rdirected=0.55] (5,2.5) to [out=30,in=270] (6,4.25);
	\draw [ thick, rdirected=0.55] (5,2.5) to [out=150,in=270] (4,4.25); 
	\node at (5,0.75) {\rotatebox{180}{\scriptsize $\mu_j$}};
	\node at (3.6, 3.75) {\rotatebox{180}{\scriptsize $1$}};
	\node at (6.4, 3.75) {\rotatebox{180}{\scriptsize $1$}};
	\node at (5.1, 3.75) { $\cdots$};
\et
}};
\endxy
\]
of $\theta_T$ as the \emph{$j^{\th}$ merge}. Taking $w_T$ by itself as a Sergeev diagram, we say a strand of $w_T$ \emph{starts in the $i^{\text{th}}$ grouping} (resp. \emph{ends in the $j^{\text{th}}$ grouping}) if, when $w_T$ is embedded in $\theta_T$, it starts in the $i^{\th}$ split (resp. ends in the $j^{\th}$ merge).

For $T\in\T(\lambda,\mu)$ we construct $w_T$ (and in turn $\theta_T$) as follows. First, for each row of $T$ we rearrange the entries into the increasing order of $D$. Next, we associate to each entry $d$ of $T$ a strand of $w_T$: if $d$ is the $i^{\th}$ entry from the left in the $j^{\th}$ row, then the corresponding strand is the $i^{\th}$ strand from the left among those starting in the $j^{\th}$ grouping. Reading the entries of $T$ from left to right, starting with the first row and working down, we connect the strand corresponding to $d$ to the leftmost available strand among those ending in the $k^{\text{th}}$ grouping if $d\in\{k,k'\}.$ Finally, we place a dot at the top of every strand corresponding to a primed entry. Below is an example for a $T\in\T((2,1,3),(1,3,2))$: 
\[
T=
\xy
(0,0)*{
\bt[scale=0.5]
	\draw (0,0) to (2,0);
	\draw (0,-1) to (2,-1);
	\draw (0,-2) to (3,-2);
	\draw (0,-3) to (3,-3);
	\node at (0.5,-0.5) { $2$};
	\node at (1.5,-0.5) { $3'$};
	\node at (0.5,-1.5) { $3$};
	\node at (0.5,-2.5) { $1'$};
	\node at (1.5,-2.5) { $2'$};
	\node at (2.5,-2.5) { $2$};
\et
};
\endxy\implies w_T=
\xy
(0,0)*{
\bt[scale=0.5, color=\clr]
	\draw [thick, directed=1] (0,0) to (1,1.5) to (1,2);
	\draw [thick,directed=1] (1,0) to (4,1.5) to (4,2);
	\draw [thick,directed=1] (2,0) to (5,1.5) to (5,2);
	\draw [thick,directed=1] (3,0) to (0,1.5) to (0,2);
	\draw [thick,directed=1] (4,0) to (2,1.5) to (2,2);
	\draw [thick,directed=1] (5,0) to (3,1.5) to (3,2);
	\draw (0,1.5) \wdot;
	\draw (2,1.5) \wdot;
	\draw (4,1.5) \wdot;
	\node at (0,-0.4) {\ss $1$};
	\node at (1,-0.4) {\ss $1$};
	\node at (2,-0.4) {\ss $1$};
	\node at (3,-0.4) {\ss $1$};
	\node at (4,-0.4) {\ss $1$};
	\node at (5,-0.4) {\ss $1$};
	\node at (0,2.4) {\ss $1$};
	\node at (1,2.4) {\ss $1$};
	\node at (2,2.4) {\ss $1$};
	\node at (3,2.4) {\ss $1$};
	\node at (4,2.4) {\ss $1$};
	\node at (5,2.4) {\ss $1$};
\et
};
\endxy \ ,\quad \theta_T=
\xy
(0,0)*{
\bt[scale=0.5, color=\clr]
	\draw [thick, ] (0,0) to (1,1.5);
	\draw [thick, ] (1,0) to (4,1.5) to (4,2.25);
	\draw [thick, ] (2,-2) to (2,0) to (5,1.5) to (5,2.25);
	\draw [thick,directed=1] (3,0) to (0,1.5) to (0,3.5);
	\draw [thick, ] (4,0) to (2,1.5);
	\draw [thick, ] (5,0) to (3,1.5);
	\node at (0,3.9) {\ss $1$};
	\draw [thick, directed=0.4, directed=0.875] (1,1.5) to [out=90, in=210] (2,3);
	\draw [thick, directed=0.875] (3,1.5) to [out=90, in=330] (2,3);
	\draw [thick, directed=0.65] (2,1.5) to [out=90, in=330] (1.4,2.5);
	\draw [thick, directed=1] (2,3) to (2,3.5);
	\node at (2,3.9) {\ss $3$};
	\draw [thick, directed=0.65] (4,2.25) to [out=90, in=210] (4.5,3);
	\draw [thick, directed=0.65] (5,2.25) to [out=90, in=330] (4.5,3);
	\draw [thick, directed=1] (4.5,3) to (4.5,3.5);
	\node at (4.5,3.9) {\ss $2$};
	\draw [thick, ] (1,0) to (1,-0.75) to [out=270, in=30] (0.5,-1.5);
	\draw [thick, ] (0,0) to (0,-0.75) to [out=270, in=150] (0.5,-1.5);
	\draw [thick, directed=0.875] (0.5,-2) to (0.5,-1.5);
	\node at (0.5,-2.4) {\ss $2$};
	\node at (2,-2.4) {\ss $1$};
	\draw [thick, rdirected=0.85] (3,0) to [out=270, in=150] (4,-1.5);
	\draw [thick, ] (5,0) to [out=270, in=30] (4,-1.5);
	\draw [thick, ] (4,0) to [out=270, in=30] (3.4,-1);
	\draw [thick, directed=0.875] (4,-2) to (4,-1.5);
	\node at (4,-2.4) {\ss $3$};
	\draw (0,1.5) \wdot;
	\draw (2,1.5) \wdot;
	\draw (4,1.5) \wdot;
\et
};
\endxy \ .
\]
It is clear from these constructions that the assignment $T\squig(w_T,\theta_T)$ is well-defined. 

Let $\T_*(\lambda,\mu)$ be the set of $T\in\T(\lambda,\mu)$ with the property that for $1\leq i\leq l(\lambda)$ and $1\leq j\leq l(\mu)$, $j'$ occurs no more than once in the $i^{\th}$ row of $T$. For example, the $T$ pictured above is an element of $\T_*((2,1,3),(1,3,2)).$

\begin{theorem}\label{basis-theorem}
For $\lambda,\mu\in\Lambda'(r)$, the set $\{\theta_T\mid T\in\T_*(\lambda,\mu)\}$ constitutes a basis of $\a_\mu\W\a_\lambda.$
\end{theorem}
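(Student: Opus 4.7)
By Theorem \ref{psi-map} it suffices to prove $\{\psi(\theta_T)\}_{T\in\T_*(\lambda,\mu)}$ is a basis of $\Hom_\H(M^\lambda, M^\mu)$. The plan is to first show that $\{\theta_T\}_{T\in\T(\lambda,\mu)}$ spans $\a_\mu\W\a_\lambda$: this is immediate from surjectivity of $\theta$ (Lemma \ref{beta-theta-maps}), the isomorphism $\xi\colon\H\xrightarrow{\sim}\a_\omega\W\a_\omega$ (Corollary \ref{xi-iso}), and the bijection between Sergeev diagrams and $\T(\lambda,\mu)$ given by $T\mapsto w_T$ (i.e. every standard basis element of $\H$ is $w_T$ for a unique $T\in\T(\lambda,\mu)$, once $\lambda$ and $\mu$ are fixed).

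Next I would show $\theta_T=0$ whenever $T\notin\T_*(\lambda,\mu)$. The hypothesis means some row $i$ of $T$ contains $j'$ at least twice; by the construction of $w_T$, the two corresponding dotted 1-strands occupy consecutive positions both in the $i$-th iterated split and in the $j$-th iterated merge of $\theta_T$. Using \eqref{associativity} to re-decompose the $i$-th split as a split into $(2,\lambda_i-2)$ followed by a split of the 2-strand, and symmetrically decomposing the $j$-th merge, one isolates a sub-web of the form of the left-hand side of \eqref{2-dots-zero} --- a digon with two dotted 1-strands --- which vanishes, forcing $\theta_T=0$.

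For linear independence of $\{\theta_T\}_{T\in\T_*(\lambda,\mu)}$, I would evaluate $\psi(\theta_T)$ on the canonical $\lambda$-supertabloid $T^\lambda$ whose rows contain $1,2,\ldots,r$ in order (all unprimed). Using \eqref{psi-dot}--\eqref{psi-crossing}, every summand of $\psi(\theta_T)(T^\lambda)\in M^\mu$ has the same primed pattern (the set of values $j$ such that $j'$ appears) as $T$ itself. Hence $T$'s with distinct primed patterns give linearly independent images, and within a fixed primed pattern a triangularity argument on a suitable ordering of $\mu$-supertabloids separates the remaining $T$'s.

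The main obstacle is the second step --- concretely rearranging $w_T$ via \eqref{associativity}, \eqref{merges-past-crossings}, and Lemma \ref{clasp-sum}(b) so that the two dotted strands form an isolated digon despite possibly crossing other strands in $w_T$. An alternative that avoids this difficulty is to bypass the vanishing step entirely and invoke Du-Wan \cite[Proposition 5.2]{DW2} (specialized to $q=1$) to conclude $\dim\Hom_\H(M^\lambda,M^\mu)=|\T_*(\lambda,\mu)|$, whereupon the linear-independence argument alone suffices.
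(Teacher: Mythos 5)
There is a genuine gap at the very first step. The map $T\mapsto w_T$ is \emph{not} a bijection between $\T(\lambda,\mu)$ and the standard basis of $\H$: there are $2^r\,r!$ Sergeev diagrams but in general far fewer supertabloids. Already for $r=2$ and $\lambda=\mu=(2,0)$ one has $|\T(\lambda,\mu)|=3$ against $8$ standard basis elements, and the only Sergeev diagrams of the form $w_T$ are $1$, $c_1$ and $c_1c_2$; the crossing $s_1$, for instance, is not any $w_T$. So surjectivity of $\theta$ together with Corollary \ref{xi-iso} only tells you that the elements $\theta(w)$, for $w$ ranging over \emph{all} Sergeev diagrams, span $\a_\mu\W\a_\lambda$; it does not tell you that the much smaller set $\{\theta_T\}$ spans. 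The missing content --- and the actual heart of the paper's spanning argument --- is the reduction of $\theta(w)$ for an \emph{arbitrary} Sergeev diagram $w$ to $\theta_T$ or $0$: one uses \eqref{untangle} to remove crossings between strands that start in the same split or end in the same merge (with \eqref{dots-past-crossings} to slide intervening dots out of the way), then \eqref{dots-past-merges} and \eqref{dot-on-k-strand} to push each dot onto a canonical leftmost $1$-strand within its split/merge pair, and finally \eqref{2-dots-zero} to kill the diagrams carrying two dots in the same split/merge pair. This is exactly the manipulation you postpone as ``the main obstacle'' for the vanishing step, but it is already needed for spanning, so your first step is not ``immediate.''

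On linear independence your route genuinely differs from the paper's. You evaluate $\psi(\theta_T)$ on a distinguished supertabloid and appeal to an unspecified triangularity; the paper instead applies the injective map $\beta$ of Lemma \ref{beta-theta-maps}, writes $\beta(\theta_T)=\sum_{\sigma\in\frakS_\lambda,\,\rho\in\frakS_\mu}\rho\,w_T\,\sigma$ via \eqref{clasp-sum}, and observes that the coefficient of the single standard basis element $w_T$ is a positive integer in $\beta(\theta_T)$ and zero in $\beta(\theta_{T'})$ for $T'\neq T$ in $\T_*(\lambda,\mu)$ --- a finite combinatorial check that separates the $\theta_T$ at once. Your evaluation idea is plausible but incomplete as stated: which entries of a summand of $\psi(\theta_T)(T^\lambda)$ get primed depends on which term of the iterated split is taken, so ``same primed pattern as $T$'' must be formulated row-by-row, and the ordering realizing triangularity is never exhibited. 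Finally, the Du--Wan escape route could in principle replace the spanning step (independence plus a dimension count suffices), but only after identifying $|\T_*(\lambda,\mu)|$ with the cardinality of Du--Wan's indexing set and completing the independence proof, neither of which is supplied; it also sacrifices the self-contained diagrammatic argument that is the point of the theorem.
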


\begin{proof}
First we prove span. We claim that for any Sergeev diagram $w$, $\theta(w)$ is equal to either zero or $\theta_T$ for some $T\in\T_*(\lambda,\mu)$, which will imply we have a spanning set by Lemma \ref{beta-theta-maps}. Indeed, Lemma \ref{untangle} ensures that any crossings between strands starting in the same split of $\theta(w)$ can be untied, and ditto for any strands ending in the same merge (possibly in conjunction with \eqref{dots-past-crossings} to slide any intervening dots out of the way). Dots can then be moved to the leftmost 1-strand among those starting in a certain split and ending in a certain merge: one uses \eqref{dots-past-merges} to create a local web in which \eqref{dot-on-k-strand} with $k=1$ may be applied. In the same way, one can use \eqref{2-dots-zero} to show that $\theta(w)=0$  if there are two or more strands which start in the same split, end in the same merge, and carry a dot. This proves the claim.

Next we prove linear independence. For $T\in\T_*(\lambda,\mu)$, we make the following claims about the expression of $\beta(\theta_T)$ in the standard basis of $\a_\omega\W\a_\omega$:
\be
\item the coefficient of $w_T$ is a positive integer, and
\item the coefficient of $w_{T'}$ is zero for all $T'\in\T_*(\lambda,\mu)$ with $T\neq T'$.
\ee
This will imply that $\beta(\sum_{S\in\T_*(\lambda,\mu)}\alpha_S\,\theta_S)=0$ only if the coefficients $\alpha_S\in\C$ are all zero, which suffices for linear independence. By \eqref{clasp-sum} we have
\[
\beta(\theta_T)=\sum_{\substack{\sigma\in\frakS_\lambda\\ \rho\in\frakS_\mu}}\rho\, w_T\,\sigma
\]
where $\frakS_\lambda:=\frakS_{\lambda_1}\times\cdots\times\frakS_{l(\lambda)}$ is the Young subgroup of $\frakS_r$ corresponding to $\lambda$ and similarly for $\frakS_\mu$. Using this both (1) and (2) are easily verified, so we leave it to the reader. The proof is complete.
\end{proof}

For example, the set $\{\theta_T\mid T\in\T_*((2,1,2),(1,3,1))\}$ consists of the webs
\[
\xy
(0,0)*{
\bt[scale=0.5, color=\clr]
	\draw [thick, ] (0,0.5) to (0,1.5);
	\draw [thick, ] (1,0.5) to (1,1.5);
	\draw [thick, ] (2,0.5) to (2,1.5);
	\draw [thick, ] (3,0.5) to (3,1.5);
	\draw [thick, ] (4,0.5) to (4,1.5);
	\draw [thick, ] (4,1.5) to (4,3);
	\draw [thick, ] (2,-0.75) to (2,0.5);
	\draw [thick,directed=1] (0,1.5) to (0,3.5);
	\node at (0,3.9) {\ss $1$};
	\draw [thick, directed=0.4, directed=0.875] (1,1.5) to [out=90, in=210] (2,3);
	\draw [thick, directed=0.875] (3,1.5) to [out=90, in=330] (2,3);
	\draw [thick, directed=0.65] (2,1.5) to [out=90, in=330] (1.4,2.5);
	\draw [thick, directed=1] (2,3) to (2,3.5);
	\node at (2,3.9) {\ss $3$};
	\draw [thick, directed=1] (4,3) to (4,3.5);
	\node at (4,3.9) {\ss $1$};
	\draw [thick, ] (1,0.5) to [out=270, in=30] (0.5,-0.25);
	\draw [thick, ] (0,0.5) to [out=270, in=150] (0.5,-0.25);
	\draw [thick, directed=0.875] (0.5,-0.75) to (0.5,-0.25);
	\node at (0.5,-1.15) {\ss $2$};
	\node at (2,-1.15) {\ss $1$};
	\draw [thick, ] (4,0.5) to [out=270, in=30] (3.5,-0.25);
	\draw [thick, ] (3,0.5) to [out=270, in=150] (3.5,-0.25);
	\draw [thick, directed=0.875] (3.5,-0.75) to (3.5,-0.25);
	\node at (3.5,-1.15) {\ss $2$};
	\filldraw
	(0,1.5) circle (4.5pt)
	(1,1.5) circle (4.5pt)
	(2,1.5) circle (4.5pt)
	(3,1.5) circle (4.5pt)
	(4,1.5) circle (4.5pt);
\et
};
\endxy \ ,\quad\quad
\xy
(0,0)*{
\bt[scale=0.5, color=\clr]
	\draw [thick, ] (0,0.5) to (0,1.5);
	\draw [thick, ] (1,0.5) to (1,1.5);
	\draw [thick, ] (2,0.5) to (4,1.5);
	\draw [thick, ] (3,0.5) to (2,1.5);
	\draw [thick, ] (4,0.5) to (3,1.5);
	\draw [thick, ] (4,1.5) to (4,3);
	\draw [thick, ] (2,-0.75) to (2,0.5);
	\draw [thick,directed=1] (0,1.5) to (0,3.5);
	\node at (0,3.9) {\ss $1$};
	\draw [thick, directed=0.4, directed=0.875] (1,1.5) to [out=90, in=210] (2,3);
	\draw [thick, directed=0.875] (3,1.5) to [out=90, in=330] (2,3);
	\draw [thick, directed=0.65] (2,1.5) to [out=90, in=330] (1.4,2.5);
	\draw [thick, directed=1] (2,3) to (2,3.5);
	\node at (2,3.9) {\ss $3$};
	\draw [thick, directed=1] (4,3) to (4,3.5);
	\node at (4,3.9) {\ss $1$};
	\draw [thick, ] (1,0.5) to [out=270, in=30] (0.5,-0.25);
	\draw [thick, ] (0,0.5) to [out=270, in=150] (0.5,-0.25);
	\draw [thick, directed=0.875] (0.5,-0.75) to (0.5,-0.25);
	\node at (0.5,-1.15) {\ss $2$};
	\node at (2,-1.15) {\ss $1$};
	\draw [thick, ] (4,0.5) to [out=270, in=30] (3.5,-0.25);
	\draw [thick, ] (3,0.5) to [out=270, in=150] (3.5,-0.25);
	\draw [thick, directed=0.875] (3.5,-0.75) to (3.5,-0.25);
	\node at (3.5,-1.15) {\ss $2$};
	\filldraw
	(0,1.5) circle (4.5pt)
	(1,1.5) circle (4.5pt)
	(2,1.5) circle (4.5pt)
	(4,1.5) circle (4.5pt);
\et
};
\endxy \ ,\quad\quad
\xy
(0,0)*{
\bt[scale=0.5, color=\clr]
	\draw [thick, ] (0,0.5) to (1,1.5);
	\draw [thick, ] (1,0.5) to (2,1.5);
	\draw [thick, ] (2,0.5) to (0,1.5);
	\draw [thick, ] (3,0.5) to (3,1.5);
	\draw [thick, ] (4,0.5) to (4,1.5);
	\draw [thick, ] (4,1.5) to (4,3);
	\draw [thick, ] (2,-0.75) to (2,0.5);
	\draw [thick,directed=1] (0,1.5) to (0,3.5);
	\node at (0,3.9) {\ss $1$};
	\draw [thick, directed=0.4, directed=0.875] (1,1.5) to [out=90, in=210] (2,3);
	\draw [thick, directed=0.875] (3,1.5) to [out=90, in=330] (2,3);
	\draw [thick, directed=0.65] (2,1.5) to [out=90, in=330] (1.4,2.5);
	\draw [thick, directed=1] (2,3) to (2,3.5);
	\node at (2,3.9) {\ss $3$};
	\draw [thick, directed=1] (4,3) to (4,3.5);
	\node at (4,3.9) {\ss $1$};
	\draw [thick, ] (1,0.5) to [out=270, in=30] (0.5,-0.25);
	\draw [thick, ] (0,0.5) to [out=270, in=150] (0.5,-0.25);
	\draw [thick, directed=0.875] (0.5,-0.75) to (0.5,-0.25);
	\node at (0.5,-1.15) {\ss $2$};
	\node at (2,-1.15) {\ss $1$};
	\draw [thick, ] (4,0.5) to [out=270, in=30] (3.5,-0.25);
	\draw [thick, ] (3,0.5) to [out=270, in=150] (3.5,-0.25);
	\draw [thick, directed=0.875] (3.5,-0.75) to (3.5,-0.25);
	\node at (3.5,-1.15) {\ss $2$};
	\filldraw
	(0,1.5) circle (4.5pt)
	(1,1.5) circle (4.5pt)
	(3,1.5) circle (4.5pt)
	(4,1.5) circle (4.5pt);
\et
};
\endxy \ ,\quad\quad
\xy
(0,0)*{
\bt[scale=0.5, color=\clr]
	\draw [thick, ] (0,0.5) to (1,1.5);
	\draw [thick, ] (1,0.5) to (2,1.5);
	\draw [thick, ] (2,0.5) to (3,1.5);
	\draw [thick, ] (3,0.5) to (0,1.5);
	\draw [thick, ] (4,0.5) to (4,1.5);
	\draw [thick, ] (4,1.5) to (4,3);
	\draw [thick, ] (2,-0.75) to (2,0.5);
	\draw [thick,directed=1] (0,1.5) to (0,3.5);
	\node at (0,3.9) {\ss $1$};
	\draw [thick, directed=0.4, directed=0.875] (1,1.5) to [out=90, in=210] (2,3);
	\draw [thick, directed=0.875] (3,1.5) to [out=90, in=330] (2,3);
	\draw [thick, directed=0.65] (2,1.5) to [out=90, in=330] (1.4,2.5);
	\draw [thick, directed=1] (2,3) to (2,3.5);
	\node at (2,3.9) {\ss $3$};
	\draw [thick, directed=1] (4,3) to (4,3.5);
	\node at (4,3.9) {\ss $1$};
	\draw [thick, ] (1,0.5) to [out=270, in=30] (0.5,-0.25);
	\draw [thick, ] (0,0.5) to [out=270, in=150] (0.5,-0.25);
	\draw [thick, directed=0.875] (0.5,-0.75) to (0.5,-0.25);
	\node at (0.5,-1.15) {\ss $2$};
	\node at (2,-1.15) {\ss $1$};
	\draw [thick, ] (4,0.5) to [out=270, in=30] (3.5,-0.25);
	\draw [thick, ] (3,0.5) to [out=270, in=150] (3.5,-0.25);
	\draw [thick, directed=0.875] (3.5,-0.75) to (3.5,-0.25);
	\node at (3.5,-1.15) {\ss $2$};
	\filldraw
	(0,1.5) circle (4.5pt)
	(1,1.5) circle (4.5pt)
	(3,1.5) circle (4.5pt)
	(4,1.5) circle (4.5pt);
\et
};
\endxy \ ,
\]
\[
\xy
(0,0)*{
\bt[scale=0.5, color=\clr]
	\draw [thick, ] (0,0.5) to (1,1.5);
	\draw [thick, ] (1,0.5) to (2,1.5);
	\draw [thick, ] (2,0.5) to (4,1.5);
	\draw [thick, ] (3,0.5) to (0,1.5);
	\draw [thick, ] (4,0.5) to (3,1.5);
	\draw [thick, ] (4,1.5) to (4,3);
	\draw [thick, ] (2,-0.75) to (2,0.5);
	\draw [thick,directed=1] (0,1.5) to (0,3.5);
	\node at (0,3.9) {\ss $1$};
	\draw [thick, directed=0.4, directed=0.875] (1,1.5) to [out=90, in=210] (2,3);
	\draw [thick, directed=0.875] (3,1.5) to [out=90, in=330] (2,3);
	\draw [thick, directed=0.65] (2,1.5) to [out=90, in=330] (1.4,2.5);
	\draw [thick, directed=1] (2,3) to (2,3.5);
	\node at (2,3.9) {\ss $3$};
	\draw [thick, directed=1] (4,3) to (4,3.5);
	\node at (4,3.9) {\ss $1$};
	\draw [thick, ] (1,0.5) to [out=270, in=30] (0.5,-0.25);
	\draw [thick, ] (0,0.5) to [out=270, in=150] (0.5,-0.25);
	\draw [thick, directed=0.875] (0.5,-0.75) to (0.5,-0.25);
	\node at (0.5,-1.15) {\ss $2$};
	\node at (2,-1.15) {\ss $1$};
	\draw [thick, ] (4,0.5) to [out=270, in=30] (3.5,-0.25);
	\draw [thick, ] (3,0.5) to [out=270, in=150] (3.5,-0.25);
	\draw [thick, directed=0.875] (3.5,-0.75) to (3.5,-0.25);
	\node at (3.5,-1.15) {\ss $2$};
	\filldraw
	(0,1.5) circle (4.5pt)
	(1,1.5) circle (4.5pt)
	(3,1.5) circle (4.5pt)
	(4,1.5) circle (4.5pt);
\et
};
\endxy \ ,\quad\quad
\xy
(0,0)*{
\bt[scale=0.5, color=\clr]
	\draw [thick, ] (0,0.5) to (1,1.5);
	\draw [thick, ] (1,0.5) to (4,1.5);
	\draw [thick, ] (2,0.5) to (0,1.5);
	\draw [thick, ] (3,0.5) to (2,1.5);
	\draw [thick, ] (4,0.5) to (3,1.5);
	\draw [thick, ] (4,1.5) to (4,3);
	\draw [thick, ] (2,-0.75) to (2,0.5);
	\draw [thick,directed=1] (0,1.5) to (0,3.5);
	\node at (0,3.9) {\ss $1$};
	\draw [thick, directed=0.4, directed=0.875] (1,1.5) to [out=90, in=210] (2,3);
	\draw [thick, directed=0.875] (3,1.5) to [out=90, in=330] (2,3);
	\draw [thick, directed=0.65] (2,1.5) to [out=90, in=330] (1.4,2.5);
	\draw [thick, directed=1] (2,3) to (2,3.5);
	\node at (2,3.9) {\ss $3$};
	\draw [thick, directed=1] (4,3) to (4,3.5);
	\node at (4,3.9) {\ss $1$};
	\draw [thick, ] (1,0.5) to [out=270, in=30] (0.5,-0.25);
	\draw [thick, ] (0,0.5) to [out=270, in=150] (0.5,-0.25);
	\draw [thick, directed=0.875] (0.5,-0.75) to (0.5,-0.25);
	\node at (0.5,-1.15) {\ss $2$};
	\node at (2,-1.15) {\ss $1$};
	\draw [thick, ] (4,0.5) to [out=270, in=30] (3.5,-0.25);
	\draw [thick, ] (3,0.5) to [out=270, in=150] (3.5,-0.25);
	\draw [thick, directed=0.875] (3.5,-0.75) to (3.5,-0.25);
	\node at (3.5,-1.15) {\ss $2$};
	\filldraw
	(0,1.5) circle (4.5pt)
	(1,1.5) circle (4.5pt)
	(2,1.5) circle (4.5pt)
	(4,1.5) circle (4.5pt);
\et
};
\endxy \ ,\quad\quad
\xy
(0,0)*{
\bt[scale=0.5, color=\clr]
	\draw [thick, ] (0,0.5) to (0,1.5);
	\draw [thick, ] (1,0.5) to (4,1.5);
	\draw [thick, ] (2,0.5) to (1,1.5);
	\draw [thick, ] (3,0.5) to (2,1.5);
	\draw [thick, ] (4,0.5) to (3,1.5);
	\draw [thick, ] (4,1.5) to (4,3);
	\draw [thick, ] (2,-0.75) to (2,0.5);
	\draw [thick,directed=1] (0,1.5) to (0,3.5);
	\node at (0,3.9) {\ss $1$};
	\draw [thick, directed=0.4, directed=0.875] (1,1.5) to [out=90, in=210] (2,3);
	\draw [thick, directed=0.875] (3,1.5) to [out=90, in=330] (2,3);
	\draw [thick, directed=0.65] (2,1.5) to [out=90, in=330] (1.4,2.5);
	\draw [thick, directed=1] (2,3) to (2,3.5);
	\node at (2,3.9) {\ss $3$};
	\draw [thick, directed=1] (4,3) to (4,3.5);
	\node at (4,3.9) {\ss $1$};
	\draw [thick, ] (1,0.5) to [out=270, in=30] (0.5,-0.25);
	\draw [thick, ] (0,0.5) to [out=270, in=150] (0.5,-0.25);
	\draw [thick, directed=0.875] (0.5,-0.75) to (0.5,-0.25);
	\node at (0.5,-1.15) {\ss $2$};
	\node at (2,-1.15) {\ss $1$};
	\draw [thick, ] (4,0.5) to [out=270, in=30] (3.5,-0.25);
	\draw [thick, ] (3,0.5) to [out=270, in=150] (3.5,-0.25);
	\draw [thick, directed=0.875] (3.5,-0.75) to (3.5,-0.25);
	\node at (3.5,-1.15) {\ss $2$};
	\filldraw
	(0,1.5) circle (4.5pt)
	(1,1.5) circle (4.5pt)
	(2,1.5) circle (4.5pt)
	(4,1.5) circle (4.5pt);
\et
};
\endxy \ ,\quad\quad
\xy
(0,0)*{
\bt[scale=0.5, color=\clr]
	\draw [thick, ] (0,0.5) to (1,1.5);
	\draw [thick, ] (1,0.5) to (4,1.5);
	\draw [thick, ] (2,0.5) to (2,1.5);
	\draw [thick, ] (3,0.5) to (0,1.5);
	\draw [thick, ] (4,0.5) to (3,1.5);
	\draw [thick, ] (4,1.5) to (4,3);
	\draw [thick, ] (2,-0.75) to (2,0.5);
	\draw [thick,directed=1] (0,1.5) to (0,3.5);
	\node at (0,3.9) {\ss $1$};
	\draw [thick, directed=0.4, directed=0.875] (1,1.5) to [out=90, in=210] (2,3);
	\draw [thick, directed=0.875] (3,1.5) to [out=90, in=330] (2,3);
	\draw [thick, directed=0.65] (2,1.5) to [out=90, in=330] (1.4,2.5);
	\draw [thick, directed=1] (2,3) to (2,3.5);
	\node at (2,3.9) {\ss $3$};
	\draw [thick, directed=1] (4,3) to (4,3.5);
	\node at (4,3.9) {\ss $1$};
	\draw [thick, ] (1,0.5) to [out=270, in=30] (0.5,-0.25);
	\draw [thick, ] (0,0.5) to [out=270, in=150] (0.5,-0.25);
	\draw [thick, directed=0.875] (0.5,-0.75) to (0.5,-0.25);
	\node at (0.5,-1.15) {\ss $2$};
	\node at (2,-1.15) {\ss $1$};
	\draw [thick, ] (4,0.5) to [out=270, in=30] (3.5,-0.25);
	\draw [thick, ] (3,0.5) to [out=270, in=150] (3.5,-0.25);
	\draw [thick, directed=0.875] (3.5,-0.75) to (3.5,-0.25);
	\node at (3.5,-1.15) {\ss $2$};
	\filldraw
	(0,1.5) circle (4.5pt)
	(1,1.5) circle (4.5pt)
	(2,1.5) circle (4.5pt)
	(3,1.5) circle (4.5pt)
	(4,1.5) circle (4.5pt);
\et
};
\endxy
\]
where the symbol
\(
\xy
(0,0)*{
\bt[scale=0.5, color=\clr]
	\filldraw
	(0,0) circle (4.5pt);
\et
};
\endxy
\)
denotes a location where a dot is permissible.\footnote{One can resolve the triple intersection in the last web using the second relation of \eqref{braid-relation}.} Since the existence of each dot is independent of the others, the set has cardinality $6(2^4)+2(2^5)=160$.

Combining Theorems \ref{psi-map} and \ref{basis-theorem}, we have:

\begin{corollary}\label{basis-corollary}
For $\lambda,\mu\in\Lambda'(r)$, the set $\{\psi(\theta_T)\mid T\in\T_*(\lambda,\mu)\}$ forms a basis of $\Hom_{\H}(M^\lambda,M^\mu)$.
\end{corollary}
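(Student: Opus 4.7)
The proof is a direct combination of the two main theorems already in hand, so the plan is very short. By \cref{basis-theorem}, the family $\{\theta_T \mid T \in \T_*(\lambda,\mu)\}$ is a basis of the superspace $\a_\mu\W\a_\lambda$, where $\a_\lambda,\a_\mu\in\A(r)$ are the tuples obtained from $\lambda,\mu\in\Lambda'(r)$ by deleting trailing zeros (so that $\widehat{\a_\lambda}=\lambda$ and $\widehat{\a_\mu}=\mu$).

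By \cref{psi-map}, the component map
\[
\psi_{\a_\lambda,\a_\mu}\colon \a_\mu\W\a_\lambda \xrightarrow{\sim} \Hom_{\H}(M^{\widehat{\a_\lambda}},M^{\widehat{\a_\mu}})=\Hom_{\H}(M^{\lambda},M^{\mu})
\]
is an isomorphism of superspaces. Since superspace isomorphisms carry homogeneous bases to homogeneous bases, applying $\psi_{\a_\lambda,\a_\mu}$ to the basis from \cref{basis-theorem} yields the asserted basis $\{\psi(\theta_T)\mid T\in\T_*(\lambda,\mu)\}$ of $\Hom_{\H}(M^\lambda,M^\mu)$.

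There is no genuine obstacle here: all of the work has been absorbed into the preceding two theorems. The only minor bookkeeping is the identification $\widehat{\a_\lambda}=\lambda$ for $\lambda\in\Lambda'(r)$, which is immediate from the definition of $\Lambda'(r)$ (compositions whose zeros are all trailing) and of $\widehat{\phantom{\a}}$ (pad with trailing zeros to length $r$). Once this identification is made explicit, the corollary follows in a single line, so I would present it as a short formal consequence rather than as an independent argument.
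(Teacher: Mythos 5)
Your proposal is correct and matches the paper exactly: the paper derives \cref{basis-corollary} in one line by combining \cref{psi-map} and \cref{basis-theorem}, which is precisely your argument of transporting the basis $\{\theta_T\}$ of $\a_\mu\W\a_\lambda$ through the superspace isomorphism $\psi_{\a_\lambda,\a_\mu}$. The bookkeeping point $\widehat{\a_\lambda}=\lambda$ for $\lambda\in\Lambda'(r)$ is the right (and only) detail to note.
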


The interested reader can use equations \eqref{psi-dot}-\eqref{psi-crossing} to deduce explicit formulas for the $\H$-morphisms $\psi(\theta_T)$ in terms of supertabloids. For example, if 
\[
T=\xy
(0,0)*{
\bt[scale=0.5]
	\draw (0,0) to (2,0);
	\draw (0,-1) to (2,-1);
	\draw (0,-2) to (2,-2);
	\draw (0,-3) to (2,-3);
	\node at (0.5,-0.5) { $2$};
	\node at (1.5,-0.5) { $2$};
	\node at (0.5,-1.5) { $3'$};
	\node at (0.5,-2.5) { $1'$};
	\node at (1.5,-2.5) { $2'$};
\et
};
\endxy\in\T_*((2,1,2),(1,3,1))
\]
then we have
\[
\theta_T=\xy
(0,0)*{
\bt[scale=0.5, color=\clr]
	\draw [thick, ] (0,0.5) to (1,1.5);
	\draw [thick, ] (1,0.5) to (2,1.5);
	\draw [thick, ] (2,0.5) to (4,1.5);
	\draw [thick, ] (3,0.5) to (0,1.5);
	\draw [thick, ] (4,0.5) to (3,1.5);
	\draw [thick, ] (4,1.5) to (4,3);
	\draw [thick, ] (2,-0.75) to (2,0.5);
	\draw [thick,directed=1] (0,1.5) to (0,3.5);
	\node at (0,3.9) {\ss $1$};
	\draw [thick, directed=0.4, directed=0.875] (1,1.5) to [out=90, in=210] (2,3);
	\draw [thick, directed=0.875] (3,1.5) to [out=90, in=330] (2,3);
	\draw [thick, directed=0.65] (2,1.5) to [out=90, in=330] (1.4,2.5);
	\draw [thick, directed=1] (2,3) to (2,3.5);
	\node at (2,3.9) {\ss $3$};
	\draw [thick, directed=1] (4,3) to (4,3.5);
	\node at (4,3.9) {\ss $1$};
	\draw [thick, ] (1,0.5) to [out=270, in=30] (0.5,-0.25);
	\draw [thick, ] (0,0.5) to [out=270, in=150] (0.5,-0.25);
	\draw [thick, directed=0.875] (0.5,-0.75) to (0.5,-0.25);
	\node at (0.5,-1.15) {\ss $2$};
	\node at (2,-1.15) {\ss $1$};
	\draw [thick, ] (4,0.5) to [out=270, in=30] (3.5,-0.25);
	\draw [thick, ] (3,0.5) to [out=270, in=150] (3.5,-0.25);
	\draw [thick, directed=0.875] (3.5,-0.75) to (3.5,-0.25);
	\node at (3.5,-1.15) {\ss $2$};
	\draw (0,1.5) \wdot;
	\draw (3,1.5) \wdot;
	\draw (4,1.5) \wdot;
\et
};
\endxy,
\]
\bea
\psi(\theta_T)\left(\,
\xy
(0,0)*{
\bt[scale=0.5]
	\draw (0,0) to (2,0);
	\draw (0,-1) to (2,-1);
	\draw (0,-2) to (2,-2);
	\draw (0,-3) to (2,-3);
	\node at (0.5,-0.5) { $2$};
	\node at (1.5,-0.5) { $4'$};
	\node at (0.5,-1.5) { $3'$};
	\node at (0.5,-2.5) { $1$};
	\node at (1.5,-2.5) { $5$};
\et
};
\endxy\,\right) & = & \psi\left(
\xy
(0,0)*{
\bt[scale=0.5, color=\clr]
	\draw [thick, ] (0,0.5) to (1,1.5);
	\draw [thick, ] (1,0.5) to (2,1.5);
	\draw [thick, ] (2,0.5) to (4,1.5);
	\draw [thick, ] (3,0.5) to (0,1.5);
	\draw [thick, ] (4,0.5) to (3,1.5);
	\draw [thick, ] (4,1.5) to (4,3);
	\draw [thick,directed=1] (0,1.5) to (0,3.5);
	\node at (0,3.9) {\ss $1$};
	\draw [thick, directed=0.4, directed=0.875] (1,1.5) to [out=90, in=210] (2,3);
	\draw [thick, directed=0.875] (3,1.5) to [out=90, in=330] (2,3);
	\draw [thick, directed=0.65] (2,1.5) to [out=90, in=330] (1.4,2.5);
	\draw [thick, directed=1] (2,3) to (2,3.5);
	\node at (2,3.9) {\ss $3$};
	\draw [thick, directed=1] (4,3) to (4,3.5);
	\node at (4,3.9) {\ss $1$};
	\draw (0,1.5) \wdot;
	\draw (3,1.5) \wdot;
	\draw (4,1.5) \wdot;
	\node at (0,0.1) {\ss $1$};
	\node at (1,0.1) {\ss $1$};
	\node at (2,0.1) {\ss $1$};
	\node at (3,0.1) {\ss $1$};
	\node at (4,0.1) {\ss $1$};
\et
};
\endxy\right)\left(\,
\xy
(0,0)*{
\bt[scale=0.5]
	\draw (0,0) to (1,0);
	\draw (0,-1) to (1,-1);
	\draw (0,-2) to (1,-2);
	\draw (0,-3) to (1,-3);
	\draw (0,-4) to (1,-4);
	\draw (0,-5) to (1,-5);
	\node at (0.5,-0.5) { $2$};
	\node at (0.5,-1.5) { $4'$};
	\node at (0.5,-2.5) { $3'$};
	\node at (0.5,-3.5) { $1$};
	\node at (0.5,-4.5) { $5$};
\et
};
\endxy \ + \ 
\xy
(0,0)*{
\bt[scale=0.5]
	\draw (0,0) to (1,0);
	\draw (0,-1) to (1,-1);
	\draw (0,-2) to (1,-2);
	\draw (0,-3) to (1,-3);
	\draw (0,-4) to (1,-4);
	\draw (0,-5) to (1,-5);
	\node at (0.5,-0.5) { $2$};
	\node at (0.5,-1.5) { $4'$};
	\node at (0.5,-2.5) { $3'$};
	\node at (0.5,-3.5) { $5$};
	\node at (0.5,-4.5) { $1$};
\et
};
\endxy \ + \ 
\xy
(0,0)*{
\bt[scale=0.5]
	\draw (0,0) to (1,0);
	\draw (0,-1) to (1,-1);
	\draw (0,-2) to (1,-2);
	\draw (0,-3) to (1,-3);
	\draw (0,-4) to (1,-4);
	\draw (0,-5) to (1,-5);
	\node at (0.5,-0.5) { $4'$};
	\node at (0.5,-1.5) { $2$};
	\node at (0.5,-2.5) { $3'$};
	\node at (0.5,-3.5) { $1$};
	\node at (0.5,-4.5) { $5$};
\et
};
\endxy \ + \ 
\xy
(0,0)*{
\bt[scale=0.5]
	\draw (0,0) to (1,0);
	\draw (0,-1) to (1,-1);
	\draw (0,-2) to (1,-2);
	\draw (0,-3) to (1,-3);
	\draw (0,-4) to (1,-4);
	\draw (0,-5) to (1,-5);
	\node at (0.5,-0.5) { $4'$};
	\node at (0.5,-1.5) { $2$};
	\node at (0.5,-2.5) { $3'$};
	\node at (0.5,-3.5) { $5$};
	\node at (0.5,-4.5) { $1$};
\et
};
\endxy\,\right)\\[2ex]
&=& \psi\left(
\xy
(0,0)*{
\bt[scale=0.5, color=\clr]
	\draw [thick, ] (0,1) to (0,1.5);
	\draw [thick, ] (1,1) to (1,1.5);
	\draw [thick, ] (2,1) to (2,1.5);
	\draw [thick, ] (3,1) to (3,1.5);
	\draw [thick, ] (4,1) to (4,1.5);
	\draw [thick, ] (4,1.5) to (4,3);
	\draw [thick,directed=1] (0,1.5) to (0,3.5);
	\node at (0,3.9) {\ss $1$};
	\draw [thick, directed=0.4, directed=0.875] (1,1.5) to [out=90, in=210] (2,3);
	\draw [thick, directed=0.875] (3,1.5) to [out=90, in=330] (2,3);
	\draw [thick, directed=0.65] (2,1.5) to [out=90, in=330] (1.4,2.5);
	\draw [thick, directed=1] (2,3) to (2,3.5);
	\node at (2,3.9) {\ss $3$};
	\draw [thick, directed=1] (4,3) to (4,3.5);
	\node at (4,3.9) {\ss $1$};
	\draw (0,1.5) \wdot;
	\draw (3,1.5) \wdot;
	\draw (4,1.5) \wdot;
	\node at (0,0.6) {\ss $1$};
	\node at (1,0.6) {\ss $1$};
	\node at (2,0.6) {\ss $1$};
	\node at (3,0.6) {\ss $1$};
	\node at (4,0.6) {\ss $1$};
\et
};
\endxy
\right)\left(\,
\xy
(0,0)*{
\bt[scale=0.5]
	\draw (0,0) to (1,0);
	\draw (0,-1) to (1,-1);
	\draw (0,-2) to (1,-2);
	\draw (0,-3) to (1,-3);
	\draw (0,-4) to (1,-4);
	\draw (0,-5) to (1,-5);
	\node at (0.5,-0.5) { $1$};
	\node at (0.5,-1.5) { $2$};
	\node at (0.5,-2.5) { $4'$};
	\node at (0.5,-3.5) { $5$};
	\node at (0.5,-4.5) { $3'$};
\et
};
\endxy \ + \ 
\xy
(0,0)*{
\bt[scale=0.5]
	\draw (0,0) to (1,0);
	\draw (0,-1) to (1,-1);
	\draw (0,-2) to (1,-2);
	\draw (0,-3) to (1,-3);
	\draw (0,-4) to (1,-4);
	\draw (0,-5) to (1,-5);
	\node at (0.5,-0.5) { $5$};
	\node at (0.5,-1.5) { $2$};
	\node at (0.5,-2.5) { $4'$};
	\node at (0.5,-3.5) { $1$};
	\node at (0.5,-4.5) { $3'$};
\et
};
\endxy \ + \ 
\xy
(0,0)*{
\bt[scale=0.5]
	\draw (0,0) to (1,0);
	\draw (0,-1) to (1,-1);
	\draw (0,-2) to (1,-2);
	\draw (0,-3) to (1,-3);
	\draw (0,-4) to (1,-4);
	\draw (0,-5) to (1,-5);
	\node at (0.5,-0.5) { $1$};
	\node at (0.5,-1.5) { $4'$};
	\node at (0.5,-2.5) { $2$};
	\node at (0.5,-3.5) { $5$};
	\node at (0.5,-4.5) { $3'$};
\et
};
\endxy \ + \ 
\xy
(0,0)*{
\bt[scale=0.5]
	\draw (0,0) to (1,0);
	\draw (0,-1) to (1,-1);
	\draw (0,-2) to (1,-2);
	\draw (0,-3) to (1,-3);
	\draw (0,-4) to (1,-4);
	\draw (0,-5) to (1,-5);
	\node at (0.5,-0.5) { $5$};
	\node at (0.5,-1.5) { $4'$};
	\node at (0.5,-2.5) { $2$};
	\node at (0.5,-3.5) { $1$};
	\node at (0.5,-4.5) { $3'$};
\et
};
\endxy
\,\right)\\[2ex]
&=&
\psi\left(
\xy
(0,0)*{
\bt[scale=0.5, color=\clr]
	\draw [thick, ] (4,1.5) to (4,3);
	\draw [thick,directed=1] (0,1.5) to (0,3.5);
	\node at (0,3.9) {\ss $1$};
	\draw [thick, directed=0.4, directed=0.875] (1,1.5) to [out=90, in=210] (2,3);
	\draw [thick, directed=0.875] (3,1.5) to [out=90, in=330] (2,3);
	\draw [thick, directed=0.65] (2,1.5) to [out=90, in=330] (1.4,2.5);
	\draw [thick, directed=1] (2,3) to (2,3.5);
	\node at (2,3.9) {\ss $3$};
	\draw [thick, directed=1] (4,3) to (4,3.5);
	\node at (4,3.9) {\ss $1$};
	\node at (0,1.1) {\ss $1$};
	\node at (1,1.1) {\ss $1$};
	\node at (2,1.1) {\ss $1$};
	\node at (3,1.1) {\ss $1$};
	\node at (4,1.1) {\ss $1$};
\et
};
\endxy
\right)\left(- \ 
\xy
(0,0)*{
\bt[scale=0.5]
	\draw (0,0) to (1,0);
	\draw (0,-1) to (1,-1);
	\draw (0,-2) to (1,-2);
	\draw (0,-3) to (1,-3);
	\draw (0,-4) to (1,-4);
	\draw (0,-5) to (1,-5);
	\node at (0.5,-0.5) { $1'$};
	\node at (0.5,-1.5) { $2$};
	\node at (0.5,-2.5) { $4'$};
	\node at (0.5,-3.5) { $5'$};
	\node at (0.5,-4.5) { $3$};
\et
};
\endxy \ + \ 
\xy
(0,0)*{
\bt[scale=0.5]
	\draw (0,0) to (1,0);
	\draw (0,-1) to (1,-1);
	\draw (0,-2) to (1,-2);
	\draw (0,-3) to (1,-3);
	\draw (0,-4) to (1,-4);
	\draw (0,-5) to (1,-5);
	\node at (0.5,-0.5) { $5'$};
	\node at (0.5,-1.5) { $2$};
	\node at (0.5,-2.5) { $4'$};
	\node at (0.5,-3.5) { $1'$};
	\node at (0.5,-4.5) { $3$};
\et
};
\endxy \ - \ 
\xy
(0,0)*{
\bt[scale=0.5]
	\draw (0,0) to (1,0);
	\draw (0,-1) to (1,-1);
	\draw (0,-2) to (1,-2);
	\draw (0,-3) to (1,-3);
	\draw (0,-4) to (1,-4);
	\draw (0,-5) to (1,-5);
	\node at (0.5,-0.5) { $1'$};
	\node at (0.5,-1.5) { $4'$};
	\node at (0.5,-2.5) { $2$};
	\node at (0.5,-3.5) { $5'$};
	\node at (0.5,-4.5) { $3$};
\et
};
\endxy \ + \ 
\xy
(0,0)*{
\bt[scale=0.5]
	\draw (0,0) to (1,0);
	\draw (0,-1) to (1,-1);
	\draw (0,-2) to (1,-2);
	\draw (0,-3) to (1,-3);
	\draw (0,-4) to (1,-4);
	\draw (0,-5) to (1,-5);
	\node at (0.5,-0.5) { $5'$};
	\node at (0.5,-1.5) { $4'$};
	\node at (0.5,-2.5) { $2$};
	\node at (0.5,-3.5) { $1'$};
	\node at (0.5,-4.5) { $3$};
\et
};
\endxy
\,\right)\\[2ex]
&=& - \ 
\xy
(0,0)*{
\bt[scale=0.5]
	\draw (0,0) to (1,0);
	\draw (0,-1) to (3,-1);
	\draw (0,-2) to (3,-2);
	\draw (0,-3) to (1,-3);
	\node at (0.5,-0.5) { $1'$};
	\node at (0.5,-1.5) { $2$};
	\node at (1.5,-1.5) { $4'$};
	\node at (2.5,-1.5) { $5'$};
	\node at (0.5,-2.5) { $3$};
\et
};
\endxy \ + \ 
\xy
(0,0)*{
\bt[scale=0.5]
	\draw (0,0) to (1,0);
	\draw (0,-1) to (3,-1);
	\draw (0,-2) to (3,-2);
	\draw (0,-3) to (1,-3);
	\node at (0.5,-0.5) { $5'$};
	\node at (0.5,-1.5) { $1'$};
	\node at (1.5,-1.5) { $2$};
	\node at (2.5,-1.5) { $4'$};
	\node at (0.5,-2.5) { $3$};
\et
};
\endxy \ - \ 
\xy
(0,0)*{
\bt[scale=0.5]
	\draw (0,0) to (1,0);
	\draw (0,-1) to (3,-1);
	\draw (0,-2) to (3,-2);
	\draw (0,-3) to (1,-3);
	\node at (0.5,-0.5) { $1'$};
	\node at (0.5,-1.5) { $2$};
	\node at (1.5,-1.5) { $4'$};
	\node at (2.5,-1.5) { $5'$};
	\node at (0.5,-2.5) { $3$};
\et
};
\endxy \ + \ 
\xy
(0,0)*{
\bt[scale=0.5]
	\draw (0,0) to (1,0);
	\draw (0,-1) to (3,-1);
	\draw (0,-2) to (3,-2);
	\draw (0,-3) to (1,-3);
	\node at (0.5,-0.5) { $5'$};
	\node at (0.5,-1.5) { $1'$};
	\node at (1.5,-1.5) { $2$};
	\node at (2.5,-1.5) { $4'$};
	\node at (0.5,-2.5) { $3$};
\et
};
\endxy \ .
\eea



\begin{thebibliography}{BHLW}


\bibitem[BE]{BE} J. Brundan and A. Ellis, Monoidal supercategories, \emph{Commun.\  Math.\ Phys.} \textbf{351}, 1045-1089.


\bibitem[BDK]{BDK}
G. Brown, N. Davidson, and J. Kujawa,
Quantum webs of type Q,
\emph{in preparation}.

\bibitem[BD]{BD}
J. Brundan and N. Davidson,
Categorical actions and crystals,
\emph{Contemp. Math.} \textbf{683} (2017), 116-159.

\bibitem[BKu]{BKu}
G. Brown and J. Kujawa,
Webs of type Q,
\arxiv{1801.00045}.

\bibitem[BKl]{BKl}
J. Brundan and A. Kleshchev,
Projective representations of symmetric groups via Sergeev duality,
\emph{Math. Z.} \textbf{239} (2002), 27-68.

\bibitem[BR]{BR}
A. Berele and A. Regev,
Hook Young diagrams with applications to combinatorics and to representations of Lie superalgebras,
\emph{Adv. Math.} \textbf{64} (1987), 118-175.

\bibitem[CKM]{CKM}
S. Cautis, J. Kamnitzer, and S. Morrison,
Webs and quantum skew Howe duality,
\emph{Math. Ann.} \textbf{360-1-2} (2014), 351-390.

\bibitem[CPT]{CPT}
S. Clark, Y.N. Peng, and S.K. Thamrongpairoj,
Super tableaux and a branching rule for the general linear Lie superalgebra,
\emph{Lin. Multilin. Alg.} \textbf{63}(2) (2015), 274-282.

\bibitem[CW]{CW}
S.-J. Cheng and W. Wang,
\emph{Dualities and Representations of Lie Superalgebras,}
Graduate Studies in Mathematics \textbf{144},
AMS, 2012.

\bibitem[DG]{DG}
R. Doty and A. Giaquinto,
Presenting Schur algebras,
\emph{Int. Math. Res. Not.} \textbf{36} (2002), 1907-1944.

\bibitem[DJM]{DJM}
R. Dipper, G. James, and A. Mathas,
Cyclotomic $q$-Schur algebras,
\emph{Math. Z.} \textbf{229} (1999), 385-416.

\bibitem[DS]{DS}
J. Du and L. Scott,
The $q$-Schur$^2$ algebra,
\emph{Trans. Amer. Math. Soc.} \textbf{352}(9) (2000), 4325-4353.

\bibitem[DW1]{DW1}
J. Du and J. Wan,
\emph{Presenting queer Schur superalgebras,}
\emph{Int. Math. Res. Not.} \textbf{8} (2015), 2210-2272.

\bibitem[DW2]{DW2}
J. Du and J. Wan,
The \texttt{Q}-$q$-Schur superalgebra,
\arxiv{1511.05112}.

\bibitem[GL]{GL}
J.J. Graham and G.I. Lehrer,
Cellular algebras,
\emph{Invent. Math.} \textbf{123} (1996), 1-34.

\bibitem[H]{H}
M.D. Haiman,
On mixed insertion, symmetry, and shifted Young tableaux,
\emph{J. Combin. Theory} Series A \textbf{50} (1989), 196-225.

\bibitem[J]{Ja}
G.D. James,
\emph{The Representation Theory of the Symmetric Groups,}
Lecture Notes in Mathematics \textbf{682}, Springer-Verlag, 1978.

\bibitem[Kl]{Kl}
A. Kleshchev,
\emph{Linear and Projective Representations of Symmetric Groups,}
Cambridge Tracts in Mathematics \textbf{163} (2005), Cambridge University Press.


\bibitem[Ku]{Ku}
G. Kuperberg,
Spiders for rank 2 Lie algebras,
\emph{Comm. Math. Phys.} \textbf{180} (1996), 109-151.

\bibitem[LNS]{LNS}
R. La Scala, V. Nardozza, and D. Senato,
Super RSK-algorithms and super plactic monoid,
\emph{Int. J. Alg. Comput.} \textbf{16}(2) (2006), 377-396.

\bibitem[Ma]{Ma}
A. Mathas,
\emph{Iwahori-Hecke Algebras and Schur Algebras of the Symmetric Group},
\emph{University Lecture Series} \text{15}, AMS, 1999.

\bibitem[Mu]{Mu}
R. Muth,
Super RSK correspondence with symmetry,
\arxiv{1711.00420}.

\bibitem[O]{O}
G.I. Olshanski,
Quantized universal enveloping superalgebra of type $Q$ and a super-extension of the Hecke algebra,
\emph{Lett. Math. Phys.} \textbf{24} (1992), 93-102.

\bibitem[QS]{QS}
H. Queffelec and A. Sartori,
Mixed skew Howe duality and link invariants,
\arxiv{1504.01225}.

\bibitem[RT]{RT}
D.E.V. Rose and D. Tubbenhauer,
Symmetric webs, Jones-Wenzl recursions and $q$-Howe duality,
\arxiv{1501.00915}.


\bibitem[Se85]{Se}
A.N. Sergeev,
Tensor algebra of the identity representation as a module over the Lie superalgebras GL$(n,m)$ and $Q(n)$,
\emph{Math. USSR Sbornik} \textbf{51} (1985), 419-427.

\bibitem[Se99]{Se2}
A.N. Sergeev,
The Howe duality and the projective representations of symmetric groups,
\emph{Rep. Th.} \textbf{3} (1999), 416-434.

\bibitem[Sp]{Sp}
W. Specht,
Die irreduziblen Darstellungen der Symmetrischen Gruppe,
\emph{Math. Z.} \textbf{39} (1935), 696-711.

\bibitem[ST]{ST}
A. Sartori and D. Tubbenhauer,
Webs and $q$-Howe dualities in types BCD,
\arxiv{1701.02932}.

\bibitem[St]{St}
J.R. Stembridge,
Shifted tableaux and the projective representations of symmetric groups,
\emph{Adv. Math.} \textbf{74} (1989), 87-134.

\bibitem[TVW]{TVW}
D. Tubbenhauer, P. Vaz, and P. Wedrich,
Super $q$-Howe duality and web categories,
\arxiv{1504.05069v2}.

\bibitem[WW]{WW}
J. Wan and W. Wang,
Lectures on spin representation theory of symmetric groups,
\emph{Bull. Ins. Math. Acad. Sin.} \textbf{7} (2012), 91-164.

\end{thebibliography}
\end{document}